\numberwithin{equation}{section}
                        \theoremstyle{plain}
\newtheorem{theorem}{Theorem}[section]
\newtheorem{lemma}[theorem]{Lemma}
\newtheorem{proposition}[theorem]{Proposition}
\newtheorem{definition}[theorem]{Definition}
\theoremstyle{definition}
\newtheorem{remark}[theorem]{Remark}
\newtheorem*{rep@theorem}{\rep@title}
\newcommand{\newreptheorem}[2]{%
\newenvironment{rep#1}[1]{%
 \def\rep@title{#2 \ref{##1}}%
 \begin{rep@theorem}}%
 {\end{rep@theorem}}}
\def\BP{\mathbb P}
\def\BC{\mathbb C}
\def\BF{\mathbb F}
\def\BZ{\mathbb Z}
\def\BQ{\mathbb Q}
\def\CZ{\mathcal Z}
\def\PSL{\mathrm{PSL}}
\def\SL{\mathrm{SL}}
\def\fp{\mathfrak p}
\def\la{\langle}
\def\ra{\rangle}
\def\om{\omega}
\def\si{\sigma}
\DeclareMathOperator{\tr}{\mathrm tr}
\def\be { \begin{equation} }
\def\ee { \end{equation} }
\newcounter{nootje}
\begin{document}

\title[Character varieties of double twist links]
{Character varieties of double twist links}

\author[Kathleen L. Petersen]{Kathleen L. Petersen}
\address{Department of Mathematics, Florida State University, Tallahassee, TX 32306, USA}
\email{petersen@math.fsu.edu}

\author[Anh T. Tran]{Anh T. Tran}
\address{Department of Mathematical Sciences, The University of Texas at Dallas, Richardson TX 75080, USA}
\email{att140830@utdallas.edu}

\begin{abstract}
We compute both natural and smooth models for the $\SL_2(\BC)$ character varieties of the two component double twist links, an infinite family of two-bridge links indexed as $J(k,l)$.  For each $J(k,l)$, the component(s) of the character variety containing characters of irreducible representations are birational to a surface of the form $C\times \BC$ where $C$ is a curve. The same is true of the canonical component.
We compute the genus of this curve, and the degree of irrationality of the canonical component. We realize the natural model of the canonical component of the $\SL_2(\BC)$ character variety of the $J(3,2m+1)$ links as the surface obtained from $\BP^1\times \BP^1$ as a series of blow-ups. 
\end{abstract}

\thanks{2010 {\em Mathematics Classification:} Primary 57M25. Secondary 57N10, 14J26.\\
{\em Key words and phrases: character variety, canonical component, double twist link.}}

\maketitle







\section{Introduction}

Given a complete orientable finite volume hyperbolic 3-manifold
with cusps, the $\SL_2(\BC)$ character variety of $M$, $X(M)$, is a
complex algebraic set associated to representations of $\pi_1(M) \rightarrow \SL_2(\BC)$.  Thurston \cite{Thurston} showed 
that
any irreducible component of such a variety containing the character
of a discrete faithful representation has complex dimension equal to
the number of cusps of $M$.  Such components are called {\em canonical
  components} and are denoted $X_0(M)$.  
Character varieties have been fundamental
tools in studying the topology of $M$ (we refer the reader to
\cite{MR1886685} for more), and 
canonical components encode a
wealth of topological information about $M$, including containing 
subvarieties associated to Dehn fillings of $M$ and identifying boundary slopes of essential surfaces \cite{MR683804}.

\begin{figure}[h!]
\begin{center}
\includegraphics[scale=.5]{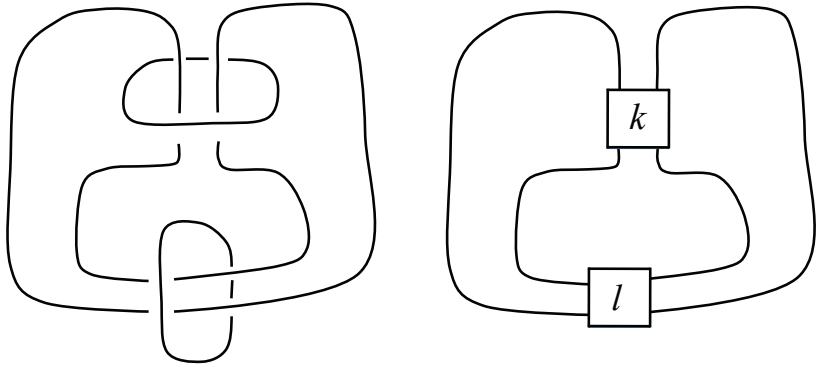}
\end{center}
\vspace{-5cm}
\caption{The link $J(k,l)$ is the result of $-1/k$ and $-1/l$ surgery on the three component link pictured on the left.}
\label{fig:threecomponents}
\end{figure}

In this paper, we consider the two component double twist links $J(k,l)$ and compute the character varieties of their complements in $S^3$.    As pictured in Figure~\ref{fig:threecomponents} the integers $k$ and $l$ determine the number of half twists in the boxes; positive
numbers correspond to right-handed twists and negative numbers
correspond to left-handed twists. The link $J(k,l)$ is a  two component link when $kl$ is odd and a knot  when $kl$ is even.   In \cite{MR2827003} character varieties of the $J(k,l)$ knots were determined and analyzed.  In this paper, we extend this work to the two component $J(k,l)$ links. These are hyperbolic exactly when $|k|$ and $|l|$ are greater than one; the $J(\pm 1,l)=J(l,\pm 1)$ links are torus links.  We will now exclusively consider the hyperbolic $J(k,l)$ links.

In Definition~\ref{definition:chebyshev} we define the Chebyshev polynomials $S_j$ which are used throughout the paper.  Our first theorem establishes natural models for the $\SL_2(\BC)$  character varieties of the double twist links. With $\pi_1(k,l)=\pi_1(S^3-J(k,l))$,   let $X_{irr}(k,l)$ denote the closure of the set of all irreducible characters $\chi_{\rho}$ such that $\rho:\pi_1(k,l)\rightarrow \SL_2(\BC)$. Let $X_0(k,l)$ denote a canonical component. 

\begin{theorem}\label{theorem:naturalmodel} Let $k=2m+1$ and $l=2n+1$.  Using the presentation for $\pi_1(k,l)$ in Section~\ref{section:doubletwistlinks} with $x=\chi_{\rho}(a)$, $y=\chi_{\rho}(b)$ and $z=\chi_{\rho}(ab^{-1})$, we have the following.

The vanishing set of $xyz+4-x^2-y^2-z^2$ in $\BC^3(x,y,z)$ is the set of characters of reducible representations of $\pi_1(k,l)\rightarrow \SL_2(\BC)$.  

A natural model for the algebraic set $X_{irr}(k,l)$ is the vanishing set of 
\[ S_n(t)S_{m-1}(z)-S_{n-1}(t)S_m(z)\] in $\BC^3(x,y,z)$ where 
\[t=\left( xS_m(z)-yS_{m-1}(z) \right)\left( yS_m(z)-xS_{m-1}(z) \right)-z\left(S^2_m(z)+S^2_{m-1}(z)\right)+4S_m(z)S_{m-1}(z).\]
\end{theorem}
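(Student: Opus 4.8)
The plan is to treat the reducible and the irreducible characters separately, using the $\SL_2(\BC)$ trace identities together with the Chebyshev recursion, in the spirit of the treatment of the $J(k,l)$ knots in \cite{MR2827003}. \emph{Reducible characters.} I would first invoke the Fricke identity $\tr\rho([a,b])=x^2+y^2+z^2-xyz-2$ (valid for any representation of a two-generator group, with $z=\chi_\rho(ab^{-1})$) together with the fact that a two-generator subgroup of $\SL_2(\BC)$ is reducible exactly when its commutator has trace $2$; this shows every reducible character of $\pi_1(k,l)$ lies on $V(xyz+4-x^2-y^2-z^2)$. For the converse I would use that $J(k,l)$ has two components, so $H_1(S^3-J(k,l))=\BZ^2$; hence the abelianization map from $F_2=\langle a,b\rangle$ to $\pi_1(k,l)$ is an isomorphism on $H_1$, the relator of the presentation in Section~\ref{section:doubletwistlinks} lies in $[F_2,F_2]$, and every diagonal representation of $F_2$ descends to $\pi_1(k,l)$. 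Given $(x,y,z)$ on the surface, solving $x=\alpha+\alpha^{-1}$ and $y=\beta+\beta^{-1}$ produces two diagonal representations whose values of $\tr(ab^{-1})$ are the two roots of $T^2-xyT+(x^2+y^2-4)$, and the surface equation forces $z$ to be one of them, so the character is realized. This pins down the reducible locus exactly.

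\emph{Irreducible characters.} For an irreducible $\rho$ I would put $A=\rho(a)$, $B=\rho(b)$ and conjugate to a Riley-type normal form, say $A=\mat{\alpha}{1}{0}{\alpha^{-1}}$ and $B=\mat{\beta}{0}{\tau}{\beta^{-1}}$, so that $(\alpha,\beta,\tau)$ and $(x,y,z)$ determine one another away from a proper subvariety; the only condition on the character is $\rho(\text{relator})=I$. The relator is built from the two twist regions of $J(k,l)$: the $k=2m+1$ box contributes powers of $ab^{-1}$ (which has trace $z$) and the $l=2n+1$ box contributes powers of a second word $c$. The key mechanical step is to expand these powers by the Cayley--Hamilton identity $C^{\,j}=S_{j-1}(\tr C)\,C-S_{j-2}(\tr C)\,I$; this makes each entry of $\rho(\text{relator})$ a polynomial in which the twist data enter only through $S_m(z),S_{m-1}(z)$ and $S_n(t),S_{n-1}(t)$, where $t:=\tr\rho(c)$. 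Evaluating and simplifying $\tr\rho(c)$ with the Chebyshev identity $S_j(z)^2+S_{j-1}(z)^2=1+z\,S_j(z)S_{j-1}(z)$ should return exactly the displayed formula for $t$ (equivalently $t=\tr(AB)+(2-\tr[A,B])\,S_m(z)S_{m-1}(z)$).

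\emph{Collapsing to one equation, and the natural model.} Next, as in Riley's argument, I expect $\rho(\text{relator})=I$ to be equivalent to the vanishing of a single well-chosen entry of $\rho(\text{relator})-I$, the other entries following from it (using that a trace-$2$ central element of $\SL_2(\BC)$ is the identity); rewriting that entry in the coordinates $x,y,z$ should give $S_n(t)S_{m-1}(z)-S_{n-1}(t)S_m(z)$, possibly times a factor vanishing on the reducible surface, which is discarded since $X_{irr}(k,l)$ is by definition the closure of the irreducible characters. Finally I would check that the irreducible characters are Zariski dense in $V\bigl(S_n(t)S_{m-1}(z)-S_{n-1}(t)S_m(z)\bigr)$: over a dense open set the normal-form construction is invertible, so a solution of the equation yields an honest irreducible representation of $\pi_1(k,l)$, and the equation does not vanish identically on the reducible surface, so no component of the vanishing set is extraneous; hence this vanishing set is the natural model of $X_{irr}(k,l)$.

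The hard part will be the computation in the second paragraph: pushing both twist regions through the (rather long) relator, identifying the word $c$ and verifying $\tr\rho(c)=t$, and keeping the Chebyshev bookkeeping tight enough that exactly the reducible factor — and nothing more — splits off, leaving a reduced equation. Checking that what remains really is the character variety on the nose, not merely birational to it, is the remaining technical point.
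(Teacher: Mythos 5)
Your route is genuinely different from the paper's, and it is viable, but it defers exactly the step that constitutes the theorem. The paper never chooses a normal form or examines matrix entries: it first rewrites the presentation as $\la a,b \mid r=\overleftarrow{r}\ra$ with $r=w_k^n(ab^{-1})^m$ (Lemma~\ref{lemma:fundamentalgroup}) and then invokes \cite[Thm.~1]{MR3103879}, which says that for a relator of this palindromic type the \emph{entire} character variety $X(k,l)$ is the zero set in $\BC^3(x,y,z)$ of the single trace polynomial $\varphi=P_{rab}-P_{\overleftarrow{r}ab}=P_{rab}-P_{bar}$. From there everything is trace calculus: $t=P_{w_k}$ and $\varphi$ are expanded via the Chebyshev recursion (Lemma~\ref{chev}), and $\varphi$ visibly factors as $(xyz+4-x^2-y^2-z^2)\bigl(S_n(t)S_{m-1}(z)-S_{n-1}(t)S_m(z)\bigr)$, which yields both halves of the statement at once. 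That citation buys precisely the two things you label as the hard part: (i) the collapse of $\rho(\mathrm{relator})=I$ to one equation, and (ii) the fact that the resulting equation is a polynomial in $x,y,z$ on the nose. In your Riley normal form the generators are meridians of \emph{different} components, so $A$ and $B$ carry independent eigenvalues $\alpha,\beta$; the surviving entry of $\rho(\mathrm{relator})-I$ is a priori a polynomial in $\alpha^{\pm1},\beta^{\pm1},\tau$, and you still need to descend it to $\BC[x,y,z]$ (symmetrizing under $\alpha\mapsto\alpha^{-1}$, $\beta\mapsto\beta^{-1}$) and then verify that what comes out is exactly $S_n(t)S_{m-1}(z)-S_{n-1}(t)S_m(z)$ times the Cayley cubic and nothing else. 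None of that is carried out, and it is not shorter than the paper's trace computation.

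Two smaller points. Your treatment of the reducible locus is correct and is actually more detailed than the paper's, which defers it to \cite{MR3078072}; the check that the relator dies in $H_1$ is the right one. Your closing density argument is essentially right, but it needs the observation that the Cayley cubic $xyz+4-x^2-y^2-z^2$ is irreducible: only then does ``$S_n(t)S_{m-1}(z)-S_{n-1}(t)S_m(z)$ does not vanish identically on the reducible surface'' rule out a shared irreducible component, which is what is actually required to conclude that its zero set equals $\overline{X_{irr}(k,l)}$.
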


Our next theorem establishes smooth models for these algebraic sets. 
\begin{theorem}\label{theorem:summary1}  Let $k=2m+1$ and $l=2n+1$. 
The algebraic set $X_{irr}(k,l)$ is  birational to $C(k,l)\times \BC$ where the curve $C(k,l)\subset \BC^2(t,z)$ is given by 
\[ C(k,l) = \{S_n(t)S_{m-1}(z)-S_{n-1}(t)S_m(z)=0\}.\]

If $k\neq l$ then $C(k,l)$ is smooth and irreducible as considered in $\BP^1(t)\times \BP^1(z)$, and $X_0(k,l)=X_{irr}(k,l)$ is birational to $C(k,l)\times \BC$.  

The curve $C(3,3)=C(-3,-3)$ is given by $t=z$.  
If $k=l$ and $|l|>3$ then $C(l,l)$ is the union of exactly two components, $C_0(l,l)$ given by $t=z$ and $C_1(l,l)$, the scheme-theoretic complement of $C_0(l,l)$ in $C(l,l)$. Both are smooth and irreducible  as considered in $\BP^1(t)\times \BP^1(z)$. The algebraic set $X_{irr}(k,l)= X_0(l,l) \cup X_1(l,l)$ where  $X_0(l,l)$ is birational to  $ C_0(l,l)\times \BC$ and $X_1(l,l)$ is birational to $ C_1(l,l)\times \BC$.  \end{theorem}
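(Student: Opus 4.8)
The plan is to reduce all of the statements to properties of the plane curve $C(k,l)=\{\Phi_{m,n}(t,z)=0\}$ sitting inside $\BP^1(t)\times\BP^1(z)$, where $\Phi_{m,n}(t,z):=S_n(t)S_{m-1}(z)-S_{n-1}(t)S_m(z)$, and then to analyze that curve through the substitution $t=s+s^{-1}$, $z=w+w^{-1}$. (The cases $k<0$ or $l<0$ reduce to the positive ones via $S_{-j-2}=-S_j$, so I describe the argument for $m,n\ge 1$.) First, the birational statement: by Theorem~\ref{theorem:naturalmodel}, $X_{irr}(k,l)$ is the vanishing set in $\BC^3(x,y,z)$ of $\Phi_{m,n}(t,z)$ with $t=t(x,y,z)$ the displayed polynomial. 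Set $u=xS_m(z)-yS_{m-1}(z)$ and $v=yS_m(z)-xS_{m-1}(z)$. For fixed $z$ the map $(x,y)\mapsto(u,v)$ is linear with determinant $S_m(z)^2-S_{m-1}(z)^2\not\equiv 0$, so $(x,y,z)\dashrightarrow(u,v,z)$ is birational; and since $t=uv-z\bigl(S_m^2(z)+S_{m-1}^2(z)\bigr)+4S_m(z)S_{m-1}(z)$ depends on $(u,v)$ only through the product $uv$, the further map $(u,v,z)\dashrightarrow(u,t,z)$ is birational and carries $X_{irr}(k,l)$ onto $\{\Phi_{m,n}(t,z)=0\}=C(k,l)\times\BC_u$. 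Because the coefficient of $t^n$ in $\Phi_{m,n}$ is $S_{m-1}(z)\not\equiv 0$, the polynomial $\Phi_{m,n}$ is nonzero of bidegree $(n,m)$ and $C(k,l)$ is a genuine curve; and since a birational map induces a bijection of irreducible components, the remaining assertions follow from the corresponding facts about $C(k,l)$.

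For the factorization, using $S_j(\zeta+\zeta^{-1})=(\zeta^{j+1}-\zeta^{-j-1})/(\zeta-\zeta^{-1})$ a direct expansion gives
\[
\Phi_{m,n}(s+s^{-1},w+w^{-1})=\frac{s^{-n}w^{-m}}{(s^2-1)(w^2-1)}\,Q(s,w),\qquad Q(s,w)=w(w^{k-1}-1)(s^{l+1}-1)-s(w^{k+1}-1)(s^{l-1}-1),
\]
and one checks that $s\mp 1$ and $w\mp 1$ each divide $Q$, so that $R:=Q/\bigl((s^2-1)(w^2-1)\bigr)$ is a polynomial, palindromic of degree $2n$ in $s$ and $2m$ in $w$, with $\Phi_{m,n}=s^{-n}w^{-m}R$. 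The quotient map $(s,w)\mapsto(s+s^{-1},w+w^{-1})$ is $4$-to-$1$, so $C(k,l)$ is the image of $\{R=0\}\subset(\BC^*)^2$ and its number of irreducible components equals the number of $(\BZ/2)^2$-orbits among the components of $\{R=0\}$. Smoothness I would obtain from the Chebyshev derivative identity $\frac{d}{dt}\bigl(S_n/S_{n-1}\bigr)=\bigl(l-S_{l-1}(t)\bigr)/\bigl((4-t^2)S_{n-1}(t)^2\bigr)$ and its $z$-analogue, which make the ramification of $t\mapsto S_n(t)/S_{n-1}(t)$ and $z\mapsto S_m(z)/S_{m-1}(z)$ completely explicit: the critical points are the $2n-2$, resp.\ $2m-2$, zeros of $S_{l-1}(t)-l$, resp.\ $S_{k-1}(z)-k$, other than $\pm 2$. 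From this one verifies that when $k\ne l$ these two maps share no critical value, hence that $\nabla\Phi_{m,n}$ vanishes nowhere on $C(k,l)$ (including at the finitely many points of $(\BP^1\times\BP^1)\setminus\BC^2$), and that when $k=l$ one has $(t-z)^2\nmid\Phi_{n,n}$, so $C_0(l,l)$ and $C_1(l,l)$ are distinct reduced components.

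For the dichotomy, the Chebyshev identity $S_aS_{b-1}-S_{a-1}S_b=-S_{a-b-1}$ gives $\Phi_{m,n}(z,z)=-S_{n-m-1}(z)$, which vanishes identically precisely when $m=n$; thus $t-z$ divides $\Phi_{m,n}$ if and only if $k=l$. When $k=l$ and $|l|=3$ one has $\Phi_{1,1}=t-z$, so $C(3,3)=C(-3,-3)$ is the line $t=z$ with nothing else; when $k=l$ and $|l|>3$, the scheme-theoretic complement $C_1(l,l)=\{\Phi_{n,n}/(t-z)=0\}$ is nonempty (on the $(s,w)$ side, $t-z$ corresponds to the factor $(s-w)(sw-1)$ of $R$). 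What remains in all cases is to show the residual curve — $C(k,l)$ itself for $k\ne l$, and $C_1(l,l)$ for $k=l>3$ — is irreducible, equivalently that $\Phi_{m,n}$, resp.\ $\Phi_{n,n}/(t-z)$, is irreducible in $\BC(z)[t]$. Now $S_n(t)-cS_{n-1}(t)$ is irreducible over $\BC(c)$, since the cover $t\mapsto S_n(t)/S_{n-1}(t)$ of $\BP^1$ is connected and so has transitive monodromy; so the claim amounts to a fiber-product computation: after the degree-$m$ base change $\BC(c)\subset\BC(z)=\BC(c)[z]/(S_m(z)-cS_{m-1}(z))$ one must check that the monodromy of $S_n/S_{n-1}$, restricted along the cover $S_m/S_{m-1}$, remains transitive on the $n$ sheets (and, when $k=l$, that only the diagonal breaks off). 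This is exactly where the explicit, cyclotomic-flavored ramification data from the derivative identity is used. Feeding the results back through the birational equivalence of the first paragraph: for $k\ne l$, $X_{irr}(k,l)$ is irreducible and, containing the discrete faithful character, equals $X_0(k,l)$; for $k=l>3$ we get $X_{irr}(l,l)=X_0(l,l)\cup X_1(l,l)$ with the stated birational models.

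The main obstacle is precisely this residual irreducibility. The birational reduction, the explicit factorization, and the smoothness verification are all a matter of careful Chebyshev bookkeeping, but ruling out a further splitting — and establishing that when $k=l$ the diagonal is the only extra component — requires genuinely controlling how the branch loci of $S_n/S_{n-1}$ and $S_m/S_{m-1}$ interact, and the combinatorics of this comparison becomes delicate when $\gcd(k,l)>1$, where naive degree bounds do not decide it.
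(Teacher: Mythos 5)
Your birational reduction in the first paragraph is exactly the paper's argument (the substitutions $u,v$ and then trading $u$ or $v$ for $t$), and your computation $\Phi_{m,n}(z,z)=-S_{n-m-1}(z)$ correctly detects when $t-z$ splits off. But the two load-bearing steps are left as assertions, and one of them is set up in a way that is genuinely harder than necessary. First, smoothness: you say that from the explicit critical points of $h_n=S_n/S_{n-1}$ and $h_m=S_m/S_{m-1}$ "one verifies" that the two maps share no critical value when $k\neq l$. That is precisely the content of the proof, and it is not a routine check: the critical points are roots of $S_{2n}(\omega)=2n+1$, which are not cyclotomic, and there is no direct formula for the critical values. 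The paper handles this with two separate arguments — an archimedean estimate showing $|h_n|>1$ at critical points when $n>0$ and $|h_n|<1$ when $n<0$ (settling $kl<0$), and a $p$-adic valuation argument using $h_n^2-1=(2n+1)/S_{n-1}^2$ at critical points together with the fact that $S_{n-1}(\omega)$ is a unit at primes dividing $2n+1$ (settling $kl>0$, $k\neq l$). Nothing in your proposal substitutes for this. Likewise the smoothness of $C_1(l,l)$ at points of the diagonal requires a second-order (l'H\^opital) computation with $H_n=S_n''S_{n-1}-S_nS_{n-1}''$ that you do not indicate.

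Second, irreducibility: you frame the residual irreducibility as a monodromy/fiber-product problem and candidly admit you cannot close it when $\gcd(k,l)>1$. This difficulty is an artifact of your route. The paper avoids it entirely via the intersection-theoretic fact that a \emph{smooth} projective curve in $\BP^1\times\BP^1$ of bidegree $(a,b)$ with $a,b>0$ is automatically irreducible (two components would meet, by positivity of the intersection pairing, contradicting smoothness). So once smoothness is proved, irreducibility is free, and no monodromy computation is needed. Finally, you do not address why, when $k=l$, the \emph{canonical} component $X_0(l,l)$ is the one lying over the diagonal $t=z$ rather than over $C_1(l,l)$; this requires the topological input that the symmetry flipping the four-plat swaps the loops whose traces are $t$ and $z$, hence fixes the component containing the discrete faithful character and its Dehn-filling limits. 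As written, the proposal establishes the birational model but leaves the smoothness, irreducibility, and canonical-component claims unproved.
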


We next compute some invariants of these algebraic sets. Since $X_{irr}(k,l)$ is birational to the product of a curve $C(k,l)$ and $\BC$ we compute the genus of this curve. 

\begin{theorem}\label{theorem:genus} 
Let $k=2m+1$ and $l=2n+1$ with $|k|, |l|>1$.
When $k\neq l$  the genus of $C(k,l)$ is \[(\lfloor  \tfrac{|k|}2 \rfloor -1)( \lfloor  \tfrac{|l|}2 \rfloor-1).\]  The genus of $C_0(l,l)$ is zero, and when $|l|>3$ the genus of $C_1(l,l)$ is  $(\lfloor  \tfrac{|l|}2 \rfloor -2)^2$.
\end{theorem}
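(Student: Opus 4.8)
The plan is to compute the genus by realizing each curve as a smooth model of a plane curve cut out by a product-of-Chebyshev relation and applying the standard genus formula after resolving singularities. First I would work on $\BP^1(t) \times \BP^1(z)$, where by Theorem~\ref{theorem:summary1} the relevant curves are smooth; the genus can then be read off from the bidegree via the adjunction formula. Recall that a smooth irreducible curve of bidegree $(a,b)$ in $\BP^1 \times \BP^1$ has genus $(a-1)(b-1)$. So the main task is to determine the bidegree of $C(k,l)$ (and of the component $C_1(l,l)$) in the variables $t$ and $z$. Since $S_j$ has degree $j$, and $k = 2m+1$, $l = 2n+1$, the polynomial $S_n(t)S_{m-1}(z) - S_{n-1}(t)S_m(z)$ has degree $n = \lfloor |l|/2 \rfloor$ in $t$ (assuming $l > 0$; the negative case follows from the symmetry $S_{-j} = -S_{j-2}$ or from $J(k,l)$-symmetries noted in the excerpt) and degree $m = \lfloor |k|/2 \rfloor$ in $z$, at least generically. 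I would check that no common factor or drop in degree occurs, so that $C(k,l)$ indeed has bidegree $(\lfloor |l|/2\rfloor, \lfloor |k|/2 \rfloor)$ when $k \neq l$, giving genus $(\lfloor |k|/2\rfloor - 1)(\lfloor |l|/2\rfloor - 1)$ directly.

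For the diagonal case $k = l$, the curve $C(l,l)$ splits as $C_0(l,l) \cup C_1(l,l)$ with $C_0$ given by $t = z$, which is a smooth rational curve, hence genus zero. For $C_1(l,l)$, I would extract it as the scheme-theoretic complement: since $S_n(t)S_{m-1}(z) - S_{n-1}(t)S_m(z)$ vanishes on $t = z$ (because it becomes $S_m S_{m-1} - S_{m-1}S_m = 0$ when $n = m$), the polynomial is divisible by $(t-z)$, and $C_1(l,l)$ is the vanishing of the quotient $Q(t,z) = \bigl(S_m(t)S_{m-1}(z) - S_{m-1}(t)S_m(z)\bigr)/(t-z)$. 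This $Q$ is symmetric in $t,z$ of degree $m-1$ in each variable, so $C_1(l,l)$ has bidegree $(m-1, m-1) = (\lfloor |l|/2 \rfloor - 1, \lfloor |l|/2\rfloor - 1)$ in $\BP^1 \times \BP^1$. If $C_1$ is smooth there (as asserted in Theorem~\ref{theorem:summary1}), adjunction gives genus $(m-2)^2 = (\lfloor |l|/2\rfloor - 2)^2$, matching the claim.

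The remaining subtlety is that the genus formula $(a-1)(b-1)$ applies to the smooth curve inside $\BP^1 \times \BP^1$, so I must confirm that the smooth projective model computed in Theorem~\ref{theorem:summary1} is the one living in $\BP^1(t) \times \BP^1(z)$ — that is, that the closure of the affine curve in $\BP^1 \times \BP^1$ is already smooth, with no singularities introduced at infinity. Theorem~\ref{theorem:summary1} states exactly this smoothness, so I would invoke it. I expect the main obstacle to be the degree bookkeeping: verifying that the leading coefficients (in $t$ and in $z$ separately) of the defining polynomial do not vanish identically, so the bidegree is genuinely $(n,m)$ and not smaller. This comes down to the fact that the leading term of $S_j$ is $x^j$, so the coefficient of $t^n z^{m-1}$ in $S_n(t)S_{m-1}(z)$ is $1$ while $S_{n-1}(t)S_m(z)$ contributes nothing to $t^n$ — making the $t$-degree exactly $n$ — and symmetrically for the $z$-degree; a short argument, but one that must be done carefully including in the $C_1$ quotient, where I need the leading coefficient of $Q$ in each variable to be nonzero (it equals the coefficient coming from $S_m(t) \cdot (\text{lower})$ after dividing by $t - z$, which one checks is $\pm 1$). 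Handling the negative-index cases $k, l < 0$ uniformly — either by the reflection symmetry of the links or directly via $S_{-1} = 0$, $S_{-2} = -1$ and the recursion — is a routine but necessary bit of care.
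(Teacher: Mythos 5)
Your proposal matches the paper's proof: the paper likewise computes the bidegree of $C(k,l)$ (and of $C_1(l,l)$ after removing the $t=z$ factor) from the degrees of the Chebyshev polynomials $S_j$, invokes the smoothness established in Theorem~\ref{theorem:summary1}, and applies the genus formula $(a-1)(b-1)$ for smooth curves of bidegree $(a,b)$ in $\BP^1\times\BP^1$. Your extra care about leading coefficients and the negative-index cases is exactly the bookkeeping the paper's Lemma~\ref{lemma:bidegree} carries out via $\deg S_j = j$ for $j>0$ and $-j-2$ for $j<-1$.
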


The degree of irrationality of an irreducible $n$-dimensional complex algebraic set $X$ is defined to be 
the minimal degree of any rational map from $X$ to a dense subset of $ \BC^n$. This is  denoted $\gamma(X)$ and is a birational invariant. When $X$ is a curve this is called the gonality of $X$.  See \cite{PetersenReid} for a discussion on how gonality and genus behave in families of Dehn fillings. In light of this, since $J(k,l)$ is $-1/k$ and $-1/l$ filling of the three component link in Figure~\ref{fig:threecomponents} we compute the degree of irrationality of the surfaces $X_0(k,l)$ and $X_1(l,l)$.

\begin{theorem}\label{theorem:degreeofirrationality}
Let $k=2m+1$ and $l=2n+1$. 
The degree of irrationality of $X_0(k,l)$ is $\min\{  \lfloor  \tfrac{|k|}2 \rfloor, \lfloor  \tfrac{|l|}2 \rfloor \}$ when $k\neq l$.  The degree of irrationality of $X_0(l,l)$ is 1, and when $|l|>3$ the degree of irrationality of $X_1(l,l)$ is $\lfloor  \tfrac{|l|}2 \rfloor -1$.
\end{theorem}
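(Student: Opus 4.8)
The plan is to exploit the birational models of Theorem~\ref{theorem:summary1} and the genus computations of Theorem~\ref{theorem:genus}. Each surface in the statement is birational to a product $C\times\BC$, where $C$ is one of $C(k,l)$, $C_0(l,l)$, $C_1(l,l)$, so the first step is the reduction
\[
\gamma(C\times\BC)=\mathrm{gon}(C),
\]
the gonality of $C$. One inequality is immediate: a dominant rational map $C\to\BP^1$ of degree $\mathrm{gon}(C)$ induces a dominant rational map $C\times\BC\to\BP^1\times\BC$ of the same degree, and composing with a birational identification of a dense open subset of $\BP^1\times\BC$ with $\BC^2$ shows $\gamma(C\times\BC)\le\mathrm{gon}(C)$. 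The reverse inequality is the statement that the degree of irrationality of a ruled surface over a base curve $B$ equals $\mathrm{gon}(B)$; I would cite this or prove it directly, by pulling back a general line under a putative degree-$d$ map $C\times\BP^1\to\BP^2$ and analyzing the resulting finite cover of $C$ (see also the discussion in \cite{PetersenReid}). Granting the reduction, $\gamma(X_0(l,l))=\mathrm{gon}(C_0(l,l))=\mathrm{gon}(\BP^1)=1$ at once since $C_0(l,l)=\{t=z\}$ is rational, and the remaining cases become the computation of $\mathrm{gon}(C(k,l))$ for $k\ne l$ and of $\mathrm{gon}(C_1(l,l))$ for $|l|>3$.

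For the upper bounds I would use the two rulings of $\BP^1(t)\times\BP^1(z)$. After passing to $k,l>0$ via the Chebyshev identity $S_{-j}=-S_{j-2}$ (so that $m=\lfloor\tfrac{|k|}2\rfloor$, $n=\lfloor\tfrac{|l|}2\rfloor$), the defining polynomial of $C(k,l)$ has degree $n$ in $t$ and degree $m$ in $z$, so the two coordinate projections give maps $C(k,l)\to\BP^1$ of degrees $m$ and $n$; hence $\mathrm{gon}(C(k,l))\le\min\{\lfloor\tfrac{|k|}2\rfloor,\lfloor\tfrac{|l|}2\rfloor\}$. Since $C_1(l,l)$ is residual to $\{t=z\}$ in $C(l,l)$, it has bidegree $(n-1,n-1)$, and the same argument gives $\mathrm{gon}(C_1(l,l))\le n-1=\lfloor\tfrac{|l|}2\rfloor-1$. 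When these bounds are at most $1$ the curve is rational and so is the corresponding surface, so $\gamma=1$ and there is nothing more to prove; I may therefore assume $m,n\ge2$, respectively $n-1\ge2$.

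For the lower bounds I would combine the Castelnuovo--Severi inequality with the genus values of Theorem~\ref{theorem:genus}. Suppose, for $k\ne l$ with $k,l>0$, that $C=C(k,l)$ carried a pencil of degree $d<\min(m,n)$, and pair it with the degree-$m$ projection $\pi_t\colon C\to\BP^1(t)$. If the resulting map $C\to\BP^1\times\BP^1$ is birational onto its image, Castelnuovo--Severi gives $g(C)\le(d-1)(m-1)<(m-1)(n-1)$, contradicting Theorem~\ref{theorem:genus}. Otherwise the two pencils factor through a cover $\nu\colon C\to C'$ with $\deg\nu=e\ge2$ and $e\mid\gcd(d,m)$; since $(\pi_t,\pi_z)\colon C\hookrightarrow\BP^1\times\BP^1$ is already birational onto its image, $\pi_z$ cannot factor through $\nu$, and one checks that $(\pi_z,\nu)\colon C\to\BP^1(z)\times C'$ is birational onto its image. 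The general Castelnuovo--Severi inequality then bounds $g(C)$ above by $e\,g(C')+(n-1)(e-1)\le e(\tfrac{m}{e}-1)(\tfrac{d}{e}-1)+(n-1)(e-1)$, and a short computation (distinguishing $m/e\ge2$ from $m/e=1$) shows this bound is incompatible with $g(C)=(m-1)(n-1)$ once $d<\min(m,n)$. The identical argument, pairing a hypothetical small pencil of $C_1(l,l)$ with one of its two rulings, gives $\mathrm{gon}(C_1(l,l))\ge n-1$. (Alternatively one may invoke the known fact that a smooth curve of bidegree $(a,b)$ with $2\le a\le b$ on $\BP^1\times\BP^1$ has gonality $a$.) Combining the upper bounds, the lower bounds, and the reduction of the first paragraph yields the three asserted values.

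The main obstacle is the lower bound, in two guises. First, the reduction $\gamma(C\times\BC)\ge\mathrm{gon}(C)$ is not accessible through pluricanonical (Cayley--Bacharach) methods, since these surfaces are uniruled with geometric genus $0$, and must instead be handled by a direct ruled-surface analysis or quoted from the literature. Second, the ``imprimitive'' case of the Castelnuovo--Severi step, where the paired pencils factor through a common cover, has to be excluded using the precise bidegrees and the exact genus, as sketched above. By contrast, the upper bounds, the passage to $k,l>0$ via Chebyshev symmetry, and the degenerate cases in which the relevant curve is rational are routine bookkeeping.
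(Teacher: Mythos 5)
Your proposal is correct and follows essentially the same route as the paper: reduce $\gamma$ of each surface to the gonality of the base curve ($C\times\BC$ has degree of irrationality equal to $\mathrm{gon}(C)$, which the paper cites from Yoshihara and Tokunaga--Yoshihara), then identify that gonality as $\min\{a,b\}$ for a smooth curve of bidegree $(a,b)$ in $\BP^1\times\BP^1$ and conclude via the bidegree computation of Lemma~\ref{lemma:bidegree}. The only difference is that you supply a Castelnuovo--Severi proof of the bidegree-gonality fact, which the paper simply quotes from \cite{PetersenReid} (Lemma 9.1).
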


Finally, we study that the $J(3,2m+1)$ links realizing $X_0(3,2m+1)$ as a series of blow ups of $\BP^1\times \BP^1$ and show the following.

\begin{theorem}\label{theorem:blowup}
The desingularization of the natural model for the canonical component of the $\SL_2(\BC)$-character variety of the double twist link $J(3,2m+1)$ is the conic bundle over the projective line $\BP^1$ which is isomorphic to the surface obtained from $\BP^1 \times \BP^1$ by repeating a one-point blow up $9m$ times if $m \ge 1$, and $-(6+9m)$ times if $m \le -2$. Equivalently,  it is isomorphic to the surface obtained from  $\BP^2$ by repeating a one-point blow up $1+9m$ times if $m \ge 1$, and $-(5+9m)$ times if $m \le -2$.
\end{theorem}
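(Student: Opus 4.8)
The plan is to start from the natural model of $X_0(3,2m+1)$ produced by Theorem~\ref{theorem:naturalmodel}. With $k=3$ we have $m=1$ in the notation there, so $S_{m-1}(z)=S_0(z)=1$ and $S_m(z)=S_1(z)=z$, and the equation $S_n(t)S_{m-1}(z)-S_{n-1}(t)S_m(z)=0$ becomes $S_n(t)-zS_{n-1}(t)=0$, while $t$ specializes to $t=(xz-y)(yz-x)-z(z^2+1)+4z$. The first step is therefore to write down explicitly the surface $V\subset\BC^3(x,y,z)$ cut out by $S_n(t(x,y,z))-z\,S_{n-1}(t(x,y,z))=0$, and to observe that for each fixed $z=z_0$ the quantity $t$ is a (generically nonsingular) conic in $(x,y)$, so that projection to the $z$-line exhibits $V$ — and hence its closure in $\BP^1\times\BP^1\times\BP^1$ or a suitable compactification — as a conic bundle over $\BP^1$. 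This is where Theorem~\ref{theorem:summary1}'s birational description $X_0(3,2m+1)\sim C(3,2m+1)\times\BC$ enters: the base of the conic bundle, after normalization, is $\BP^1$ because $C_0$-type phenomena do not occur for $k\neq l$ and the curve $\{S_n(t)-zS_{n-1}(t)=0\}$ is rational (it is the graph $z=S_n(t)/S_{n-1}(t)$).

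The second step is to pass to the \emph{relatively minimal} smooth model of this conic bundle and count its singular fibers. A conic bundle over $\BP^1$ with $\delta$ singular fibers (each a pair of lines) is, after the blow-downs that turn it relatively minimal into a Hirzebruch surface, obtained from $\BP^1\times\BP^1$ (or $\BP^2$) by $\delta$ point blow-ups; equivalently its Picard rank is $2+\delta$ and its Euler characteristic is $4+\delta$. So the real content is: determine the number of degenerate fibers of the conic bundle $V\to\BP^1_z$ — i.e.\ the values of $z$ (including $z=\infty$) at which the conic in $(x,y)$ degenerates — together with their contribution, and check that each contributes exactly one blow-up to the relatively minimal model. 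The degeneracy locus is governed by the discriminant of the conic $t=(xz-y)(yz-x)-z(z^2+1)+4z$ in $(x,y)$, whose vanishing, combined with the equation $S_n(t)=zS_{n-1}(t)$, should be analyzed via the factorization properties of the Chebyshev polynomials $S_j$ (their roots are $2\cos(j\pi/(N))$-type values, and $S_n(t)-zS_{n-1}(t)$ factors nicely over the $z$-line). I expect the count to produce $9m$ relevant points for $m\ge 1$ and $-(6+9m)$ for $m\le -2$, the linear-in-$m$ growth reflecting the $2m$ twists plus boundary contributions from the three fibers over $z=0,\pm2$ (the reducible locus) and $z=\infty$.

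The third step is the bookkeeping translating ``conic bundle with $\delta$ singular fibers'' into the two equivalent blow-up statements. Using $\chi(\BP^1\times\BP^1)=4$ and $\chi(\BP^2)=3$, and the fact that a single point blow-up raises $\chi$ by $1$, the surface is $\BP^1\times\BP^1$ blown up $\delta$ times iff it is $\BP^2$ blown up $\delta-1$ times; plugging $\delta=9m$ (resp.\ $\delta=-(6+9m)$) gives $1+9m$ (resp.\ $-(5+9m)$) for the $\BP^2$ description, matching the statement. One must also verify that the smooth model in Theorem~\ref{theorem:blowup} really is this relatively minimal conic bundle and not some further blow-up — i.e.\ that the birational map from the natural model to the conic bundle is resolved by exactly the blow-ups at the degenerate fibers and at the finitely many singular points of the natural model lying over them.

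The main obstacle, I expect, is the explicit singularity analysis: one must identify precisely which fibers of $V\to\BP^1_z$ are degenerate, resolve the (possibly non-reduced or reducible) fibers over $z=0,\pm 2,\infty$ where the reducible-character locus $xyz+4-x^2-y^2-z^2=0$ meets $V$, and confirm that after blowing up the singular points of the natural model one lands on a \emph{smooth minimal} conic bundle with the asserted number of singular fibers — in particular that no extra $(-1)$-curves survive in the fibers and that the parity split ($m\ge1$ versus $m\le-2$, with the excluded cases $m=0,-1$ corresponding to the non-hyperbolic $J(3,\pm1)$ and to $J(3,-1)=J(3,-1)$ degeneracies) is correctly accounted for. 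The Chebyshev identities $S_n(t)-zS_{n-1}(t) = \prod(\text{linear factors in } z)$ and the recursion $S_{j+1}=zS_j-S_{j-1}$ will do most of the algebraic work, but matching the blow-up count to $9m$ requires carefully tracking multiplicities at the base points of the relevant pencils.
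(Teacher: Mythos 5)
Your overall framework (exhibit the model as a conic bundle over $\BP^1$ and convert a count of exceptional data into a blow-up count via the Euler characteristic) is the right one, but there are two substantive problems with the proposal as written.

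First, the setup. The paper treats $J(3,2m+1)$ by taking $n=1$, so the defining polynomial is $tS_{m-1}(z)-S_m(z)$, which is \emph{linear} in $t$ and hence quadratic in $(x,y)$ for each fixed $z$; after homogenizing in $\BP^2\times\BP^1$ this really is a conic bundle over $\BP^1_{(z:w)}$. Your specialization ($k=3$ forcing $S_{m-1}(z)=1$, $S_m(z)=z$) yields $S_n(t)-zS_{n-1}(t)=0$ with $t$ quadratic in $(x,y)$, so the fiber of the projection to the $z$-line is a union of $n$ conics (a degree-$2n$ curve), not a conic: the claim that "projection to the $z$-line exhibits $V$ as a conic bundle" fails for your model. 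One could instead fiber over the rational curve $C$ via $t$, but that is not what you analyze; and since the theorem is about the desingularization of a \emph{specific} projective model (a quantity that is not a birational invariant), the choice of model genuinely matters.

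Second, and more seriously, the quantitative heart of the argument is absent, and the heuristic you propose to supply it would produce the wrong number. For the paper's model the degenerate fibers lie over $(1:0)$, the roots of $S_{m-1}$, of $S_{3m}$, and of $S_m\pm S_{m-1}$ --- that is, $6m$ fibers for $m\ge 1$, several of which are \emph{double lines} --- not over $z=0,\pm 2,\infty$ as you predict. If each degenerate fiber contributed exactly one blow-up, the answer would be $6m$, not $9m$. The discrepancy of $3m$ is accounted for by the $4m$ singular points of $S$ (all sitting on degenerate fibers) and by the resolution of the non-reduced fibers; the paper obtains the correct total by enumerating the singular points, computing $\chi(S)=4+5m$ (for $m\ge1$) through a stratified analysis of the rational map $(x:y:u,z:w)\mapsto(x:y,z:w)$ to $\BP^1\times\BP^1$, and then applying $N=\chi(\tilde S)-4=\chi(S)+N_{\mathrm{sing}}-4=9m$. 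Neither the singular-point enumeration nor the Euler characteristic computation appears in your proposal, and without them the identity "$N=$ number of degenerate fibers" that your plan rests on is simply false for this surface.
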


\begin{remark}
For $m \ge 1$, the link $J(3,2m+1)$ is obtained by $1/m$ Dehn surgery on the Magic manifold. Hence Theorem \ref{theorem:blowup} confirms Conjecture 3.1.3 in Landes' thesis \cite{Landes}. 
\end{remark}

\section*{Acknowledgement}
This work was partially supported by a grant from the Simons Foundation (\#209226 to Kathleen Petersen).

\section{Character Varieties}

We will define our notations, but refer the reader to \cite{MR2827003} for a detailed discussion of character varieties.  Let $M$ be a complete finite volume  hyperbolic 3-manifold.    The $\SL_2(\BC)$ character variety of $M$ is the set of all characters of representations $\rho:\pi_1(M) \rightarrow \SL_2(\BC)$.  The character associated to $\rho$ is $\chi_{\rho}: \pi_1(M) \rightarrow \BC$ defined by $\chi_{\rho}(\gamma) = \tr \rho(\gamma)$. 

Let $X(M)$ denote the $\SL_2(\BC)$ character variety, that is 
\[ X(M)= \{ \chi_{\rho} \mid \rho:\pi_1(M) \rightarrow \SL_2(\BC)\}.\] The characters of reducible representations themselves form an algebraic set, which is a subset of $X(M)$.  We will call this set $X_{red}(M)$.  The closure of the set of characters of irreducible representations will be denoted by $X_{irr}(M)$.   Any irreducible component of $X(M)$ which contains the character of a discrete faithful representation is contained in $X_{irr}(M)$ and is called a canonical component and denoted $X_0(M)$. 

Thurston \cite{Thurston} showed that the complex dimension of any canonical component equals the number of cusps of $M$. The canonical components encode much of the topology of $M$, including containing subvarieties corresponding to Dehn fillings of $M$ (see \cite{PetersenReid}) and their ideal points can be used to determine essential surfaces in $M$ (see \cite{MR683804}).  

When $M$ has only one cusp $X_0(M)$ is a curve.  Several infinite families of these have been studied (see \cite{MR3078072}, \cite{MR2827003}, \cite{MR3103879}).  When $M$ has at least two cusps the algebraic geometry becomes more demanding, and only a few solitary examples have been computed.  Landes \cite{MR2831837, Landes} computed a smooth model for the canonical component of the $\SL_2(\BC)$ character variety of the complement of the Whitehead link, a two component link.   (She explicitly showed that it is a rational surface homeomorphic to the projective plane blown up at 10 points.) Harada \cite{harada} computed the character varieties of the four arithmetic two-bridge link complements (including the Whitehead link and the figure-8 knot).  Our computation of the character varieties of the double twist links is the first result to compute character varieties for infinitely many 3-manifolds with two cusps.

\section{Double Twist Links}\label{section:doubletwistlinks}

Let $J(k,l)$ be the double twist link indicated in the right hand side of  Figure~\ref{fig:threecomponents}.  This link is $-1/k$ and $-1/l$ filling on two components of the three component link shown in the left hand side of  Figure~\ref{fig:threecomponents}.  This is a knot when $kl$ is even and a two component link when $kl$ is odd.   
The link $J(k,l)$ corresponds to the continued fraction $[k,-l]$. It is hyperbolic, unless $|k|$ or $|l|$ is $1$.
Let  $X(k,l)$ denote   the $\SL_2(\BC)$  character variety of $S^3-J(k,l)$. 

In \cite{MR2827003} the character varieties of the $J(k,l)$ knots were computed.  We now consider the $J(k,l)$ links with two components, so  both $k$ and $l$ are odd. Suppose $k=2m+1$ and $l=2n+1$. The link group of $J(k,l)$ is  $\pi_1(k,l)=\pi_1(S^3-J(k,l))$ and has presentation
$$\pi_1(k,l)=\la a, b \mid aw_k^nb=w_k^{n+1}\ra$$
where $w_k=(ab^{-1})^mab(a^{-1}b)^m$ \cite{MR2827003}.

\begin{definition}
Let $F_{a,b}=\la a,b \ra$ be the free group in two letters $a$ and $b$. For a word $u$ in $F_{a,b}$ let $\overleftarrow{u}$ denote the word obtained from $u$ by writing the letters in $u$ in reversed order.
\end{definition}

We begin by simplifying the presentation of the link group.
\begin{lemma}\label{lemma:fundamentalgroup}
With  $w_k=(ab^{-1})^mab(a^{-1}b)^m$ and $r=w_k^n(ab^{-1})^m$, we have \[ \pi_1(k,l)=\la a,b \mid r= \overleftarrow{r} \ra.\]
\end{lemma}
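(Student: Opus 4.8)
The plan is to treat this as a Tietze–transformation statement: I want to show that the relator $R_1=aw_k^n b\,w_k^{-(n+1)}$ of the given presentation and the relator $R_2=r\,\overleftarrow{r}^{\,-1}$ of the claimed presentation have the same normal closure in the free group $F_{a,b}$. In fact I will aim for the stronger assertion that $R_2$ is \emph{conjugate} to $R_1$ in $F_{a,b}$; since conjugate words generate the same normal subgroup, this immediately yields $\la a,b\mid R_1\ra=\la a,b\mid R_2\ra=\la a,b\mid r=\overleftarrow{r}\ra$.

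First I would isolate the handful of identities about $w_k=(ab^{-1})^mab(a^{-1}b)^m$ that carry the whole argument. Writing $P=(ab^{-1})^m$ and $Q=(a^{-1}b)^m$, so that $w_k=P(ab)Q$, the elementary sliding relations $(ab^{-1})^ma=a(b^{-1}a)^m$ and $b(a^{-1}b)^m=(ba^{-1})^mb$ in $F_{a,b}$ give the alternative form
\[ w_k=a\,(b^{-1}a)^m(ba^{-1})^m\,b=a\,Q^{-1}P^{-1}\,b. \]
Reversal $u\mapsto\overleftarrow{u}$ is an anti-automorphism of $F_{a,b}$ that fixes $a,b$ and commutes with inversion, so $\overleftarrow{P}=(b^{-1}a)^m=Q^{-1}$ and $\overleftarrow{Q}=P^{-1}$. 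Applying reversal to $w_k=aQ^{-1}P^{-1}b$ then gives $\overleftarrow{w_k}=bQPa$, which is a cyclic rotation of $w_k=PabQ$; hence $\overleftarrow{w_k}$ is conjugate to $w_k$, with an explicit conjugator $c$ (one may take $c=b^{-1}P$, equivalently $c=Pa$) read off from the rotation, so $(\overleftarrow{w_k})^n=c^{-1}w_k^n c$. The only other ingredient I need, again immediate from $w_k=aQ^{-1}P^{-1}b$, is $PQ=b\,w_k^{-1}a$.

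With these in hand the rest is bookkeeping. From $r=w_k^nP$ I get $\overleftarrow{r}=\overleftarrow{P}\,(\overleftarrow{w_k})^n=Q^{-1}c^{-1}w_k^n c$, hence
\[ R_2=r\,\overleftarrow{r}^{\,-1}=w_k^n\,P\,c^{-1}\,w_k^{-n}\,c\,Q. \]
Taking $c=b^{-1}P$ makes $Pc^{-1}=b$ and $cQ=b^{-1}PQ=b^{-1}(b\,w_k^{-1}a)=w_k^{-1}a$, so $R_2=w_k^n b\,w_k^{-n}w_k^{-1}a=w_k^n b\,w_k^{-(n+1)}a=a^{-1}R_1a$. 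Thus $R_2$ is conjugate to $R_1$ and the lemma follows.

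The substantive point — the step I would make sure to get exactly right — is the middle paragraph: recognizing that $\overleftarrow{w_k}$ is conjugate to $w_k$ itself (rather than to $w_k^{\pm1}$ up to a messier correction) and nailing down the precise conjugator $c$, since a careless choice will fail to telescope the leftover $P$'s and $Q$'s in $R_2$. Once the pair of identities $\overleftarrow{w_k}=c^{-1}w_kc$ and $PQ=b\,w_k^{-1}a$ is in place, everything downstream is forced; I would verify the smallest case $m=n=1$ (the link $J(3,3)$) by hand first to pin down the signs and the exact form of $c$ before running the general argument.
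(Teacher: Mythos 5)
Your proof is correct and is essentially the paper's argument in different packaging: the paper manipulates the relation $aw_k^nb=w_k^{n+1}$ directly using the decomposition $w_k=cd$ with $c=(ab^{-1})^ma$ and $d=b(a^{-1}b)^m$ both palindromes, which is exactly your observation that $\overleftarrow{w_k}$ is the cyclic rotation $bQPa$ of $w_k=PabQ$, so both proofs come down to exhibiting the two relators as conjugate in $F_{a,b}$. One small imprecision worth noting: your two candidate conjugators $b^{-1}P$ and $Pa$ are not equal as elements of $F_{a,b}$ (they differ by the factor $w_k$, which centralizes $w_k$), but both do conjugate $w_k$ to $\overleftarrow{w_k}$, and your computation with $c=b^{-1}P$ checks out, yielding $R_2=a^{-1}R_1a$ as claimed.
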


\begin{proof}

We can rewrite the presentation of $\pi_1(k,l)$ as follows
\begin{eqnarray*}
\pi_1(k,l) &=& \la a, b \mid aw_k^nb=(ab^{-1})^mab(a^{-1}b)^mw_k^{n-1}(ab^{-1})^mab(a^{-1}b)^m\ra \\
      &=& \la a, b \mid w_k^n=(b^{-1}a)^mb(a^{-1}b)^mw_k^{n-1}(ab^{-1})^ma(ba^{-1})^m\ra \\
      &=& \la a, b \mid w_k^n(ab^{-1})^m=(b^{-1}a)^mb(a^{-1}b)^mw_k^{n-1}(ab^{-1})^ma\ra.
\end{eqnarray*}

Letting $c=(ab^{-1})^ma$ and $d=b(a^{-1}b)^m$ then $w_k=cd$. It follows that $$b(a^{-1}b)^mw_k^{n-1}(ab^{-1})^ma=d(cd)^{n-1}c=(dc)^n=\overleftarrow{(cd)^n}=\overleftarrow{w_k^n}.$$
Hence 
\begin{eqnarray*}
\pi_1(k,l) &=& \la a,b \mid w_k^n(ab^{-1})^m = \overleftarrow{(ab^{-1})^m} \, \overleftarrow{w_k^n}\ra \\
      &=& \la a,b \mid w_k^n(ab^{-1})^m = \overleftarrow{w_k^n(ab^{-1})^m} \ra.
\end{eqnarray*}
Since $r=w_k^n(ab^{-1})^m$, the lemma follows.
\end{proof}

The character variety of $F_{a,b}$ is isomorphic to $\BC^3$ by the Fricke-Klein-Vogt theorem \cite{FrickeKlein, MR1508833}. 
Let $x=\tr \rho(a)$, $y=\tr \rho(b)$ and $z=\tr \rho(ab^{-1})$. Consider a word $u$ in $F_{a,b}$.
Define the polynomial $P_u \in \BC[x,y,z]$ to be $P_u(x,y,z)= \tr \rho(u)$.
It follows that for every word $u$ in $F_{a,b}$ the  polynomial $P_u$ is the unique polynomial such that for any representation $\rho: F_{a,b} \to \SL_2(\BC)$ we have $\tr \rho(u)=P_u (x,y,z)$.  

We now consider representations  $\rho:\pi_1(k,l) \rightarrow \SL_2(\BC)$.  By Lemma~\ref{lemma:fundamentalgroup} the group  $\pi_1(k,l)$ has a presentation with two generators and one relation and therefore is a quotient of $F_{a,b}$.  First, we establish some notations which we will use throughout the manuscript.
\begin{definition}
Let  $k=2m+1$ and $l=2n+1$. For $\rho:\pi_1(k,l) \rightarrow \SL_2(\BC)$ define 
\[ x=\tr \rho(a),~y=\tr\rho(b) \ \text{and}  \  z=\tr\rho(ab^{-1})\] and for a word $u$ in $F_{a,b}$ define the polynomial $P_u(x,y,z)= \tr\rho(u) \in  \BC[x,y,z]$.  Further, let   $t=P_{w_k}$ and 
 \[\varphi(x,y,z) = P_{rab}-P_{\overleftarrow{r}ab}.\]
\end{definition}
For every representation $\rho: \pi_1(k,l) \to \SL_2(\BC)$, we consider $x,y$ and $z$ as functions of $\rho$. Using the above two generator one relation presentation for $\pi_1(k,l)$ we conclude that $P_{rab}=P_{\overleftarrow{r}ab}$, which is simply $\varphi(x,y,z)=0$, in $X(k,l)$.   In fact, by \cite[Thm.1]{MR3103879} $X(k,l)$ is exactly the zero set of $\varphi(x,y,z)$.  (See also \cite[Thm.2.1]{MR2888942}.) Moreover,  because of the format of the defining word, $P_{\overleftarrow{r}ab}=P_{bar}$ \cite[Thm.1]{MR3103879}.  (That is, these polynomials in $\BC^3[x,y,z]$ are identical.)  Therefore, $\varphi(x,y,z) = P_{rab}-P_{bar}$. We summarize this discussion in the following proposition. 
\begin{proposition}
The polynomial $\varphi(x,y,z) = P_{rab}-P_{bar}$. 
The character variety $X(k,l)$ is the zero set of $\varphi(x,y,z)$ in $\BC^3(x,y,z)$. 
\end{proposition}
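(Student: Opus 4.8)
The plan is to reduce the proposition to two ingredients that are essentially already present in the preceding discussion. Throughout I regard $X(k,l)$ as a subset of $\BC^3(x,y,z)$ via $\chi\mapsto(\chi(a),\chi(b),\chi(ab^{-1}))$; this is legitimate because $\pi_1(k,l)$ is a two--generated quotient of $F_{a,b}$ and, by the Fricke--Klein--Vogt theorem \cite{FrickeKlein, MR1508833}, a character of $F_{a,b}$ is determined by the triple $(x,y,z)$ (for instance $\tr\rho(ab)=xy-z$). There are then two assertions to establish: the polynomial identity $\varphi=P_{rab}-P_{bar}$ in $\BC[x,y,z]$, and the equality of sets $X(k,l)=Z(\varphi)$.

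For the polynomial identity, since $\varphi=P_{rab}-P_{\overleftarrow r ab}$ by definition it suffices to show $P_{\overleftarrow r ab}=P_{bar}$, and this combines a purely combinatorial observation with reversal--invariance of traces. Combinatorially, if $v=\overleftarrow r\,a\,b$ then reversing the letter sequence of $v$ gives $\overleftarrow v=b\,a\,\overleftarrow{\overleftarrow r}=bar$. Reversal--invariance of traces is the statement that $\tr\rho(u)=\tr\rho(\overleftarrow u)$ for every word $u$ in $F_{a,b}$ and every $\rho:F_{a,b}\to\SL_2(\BC)$: writing $u=s_1\cdots s_p$ in letters $s_i\in\{a^{\pm1},b^{\pm1}\}$ we have $(\overleftarrow u)^{-1}=s_1^{-1}\cdots s_p^{-1}=\iota(u)$, where $\iota$ is the automorphism of $F_{a,b}$ inverting each generator, so $\tr\rho(\overleftarrow u)=\tr\bigl(\rho(\overleftarrow u)^{-1}\bigr)=\tr\rho(\iota(u))=\tr(\rho\circ\iota)(u)$; since $\rho\circ\iota$ has the same $(x,y,z)$ as $\rho$ (e.g.\ $\tr(\rho(a)^{-1}\rho(b))=\tr(\rho(a)\rho(b)^{-1})=z$ by cyclicity and $\tr M^{-1}=\tr M$), the two representations have the same character, whence $\tr(\rho\circ\iota)(u)=\tr\rho(u)$. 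Applying this with $u=v$ gives $P_{\overleftarrow r ab}=P_v=P_{\overleftarrow v}=P_{bar}$, and therefore $\varphi=P_{rab}-P_{bar}$.

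The inclusion $X(k,l)\subseteq Z(\varphi)$ is immediate: by Lemma~\ref{lemma:fundamentalgroup} the relator of $\pi_1(k,l)$ makes $rab=\overleftarrow r\,ab$ in $\pi_1(k,l)$, so $\tr\rho(rab)=\tr\rho(\overleftarrow r\,ab)$ for every $\rho:\pi_1(k,l)\to\SL_2(\BC)$, which in coordinates is exactly $\varphi(x,y,z)=0$. The reverse inclusion $Z(\varphi)\subseteq X(k,l)$ is the substantive point, and I would deduce it from \cite[Thm.~1]{MR3103879} (see also \cite[Thm.~2.1]{MR2888942}): given $(x,y,z)$ with $\varphi(x,y,z)=0$, choose $\widetilde\rho:F_{a,b}\to\SL_2(\BC)$ realizing this triple and show $\widetilde\rho$ descends to $\pi_1(k,l)=\la a,b\mid r=\overleftarrow r\ra$, i.e.\ $\widetilde\rho(r)=\widetilde\rho(\overleftarrow r)$. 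The main obstacle is precisely that $\varphi(x,y,z)=0$ records only the scalar equality $\tr\widetilde\rho(rab)=\tr\widetilde\rho(\overleftarrow r\,ab)$, which a priori is strictly weaker than the matrix equality $\widetilde\rho(r)=\widetilde\rho(\overleftarrow r)$; upgrading one to the other exploits the special (palindromic) form of the two--bridge relator $r=\overleftarrow r$, and is organized by a case split — the irreducible case, where a normal form for $\widetilde\rho(a),\widetilde\rho(b)$ forces two matrices of equal trace and of this particular shape to coincide, and the reducible case, where the point $(x,y,z)$ is the character of a diagonal representation of $F_{a,b}$, which factors through $\pi_1(k,l)$ automatically since diagonal matrices commute and reversal preserves exponent sums. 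I would cite \cite{MR3103879,MR2888942} for this upgrade rather than reproduce the computation; together with the two inclusions this yields $X(k,l)=Z(\varphi)$ and completes the proposition.
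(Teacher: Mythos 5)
Your proposal is correct and follows essentially the same route as the paper: the substantive inclusion $Z(\varphi)\subseteq X(k,l)$ is, in both cases, delegated to \cite[Thm.~1]{MR3103879} (see also \cite[Thm.~2.1]{MR2888942}), and the rest is the easy inclusion plus the identity $P_{\overleftarrow{r}ab}=P_{bar}$. The only difference is that you supply a self-contained proof of that reversal identity via the inverting automorphism $\iota$ and the Fricke--Klein--Vogt theorem, whereas the paper simply cites it as a consequence of the form of the defining word; your argument is valid and adds nothing that needs checking beyond standard trace identities.
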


We wish to obtain a nice format for $\varphi$.  We introduce a family of Chebyshev polynomials, often called the Fibonacci polynomials, that will be essential to our computation of $\varphi$.  (These are slightly different polynomials than were used in \cite{MR2827003}; the indices are shifted by one.)
\begin{definition}\label{definition:chebyshev}
Let $S_j(\omega)$ be the Chebyshev polynomials defined by $S_0(\omega)=1,~S_1(\omega)=\omega$ and $S_{j+1}(\omega)=\omega S_j(\omega)-S_{j-1}(\omega)$ for all integers $j$.
\end{definition}
It is elementary to verify the following lemmas.
\begin{lemma}\label{lemma:chebyshevidentities}
With $\omega=\sigma+\sigma^{-1}$ we have 
\[ S_j(\omega) = \frac{\sigma^{j+1}-\sigma^{-j-1}}{\sigma-\sigma^{-1}}.\] 
The degree of $S_j$ is $j$ if $j> -1$ and $-j-2$ if $j<-1$.  
\end{lemma}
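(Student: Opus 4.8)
The plan is to establish the closed-form identity first, by two-sided induction on $j$ (equivalently, by the uniqueness of solutions of a linear three-term recurrence), and then to read off the degree statement as a corollary.

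First I would set $f_j(\sigma) = \dfrac{\sigma^{j+1} - \sigma^{-j-1}}{\sigma - \sigma^{-1}}$ and check the two base cases: $f_0 = 1 = S_0(\omega)$ and $f_1 = \sigma + \sigma^{-1} = \omega = S_1(\omega)$. Then I would verify that $f_j$ satisfies the defining recurrence $\omega f_j - f_{j-1} = f_{j+1}$ with $\omega = \sigma + \sigma^{-1}$; this is the one-line computation $(\sigma+\sigma^{-1})(\sigma^{j+1}-\sigma^{-j-1}) - (\sigma^{j}-\sigma^{-j}) = \sigma^{j+2}-\sigma^{-j-2}$, and dividing by $\sigma - \sigma^{-1}$ gives $f_{j+1}$. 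Since the recurrence $S_{j+1} = \omega S_j - S_{j-1}$ can be run forward and, rewritten as $S_{j-1} = \omega S_j - S_{j+1}$, backward, and $f_j$ agrees with $S_j(\omega)$ at $j = 0,1$, induction in both directions yields $S_j(\omega) = f_j(\sigma)$ for every integer $j$. In particular this forces $S_{-1}(\omega) = 0$ and, substituting $j \mapsto -j-2$ in the closed form, the reflection identity $S_{-j-2}(\omega) = -S_j(\omega)$.

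Next I would prove the degree claim. For $j \ge 0$ a forward induction suffices: $\deg S_0 = 0$, $\deg S_1 = 1$, and in $S_{j+1} = \omega S_j - S_{j-1}$ the summand $\omega S_j$ has degree $j+1$ while $S_{j-1}$ has strictly smaller degree $j-1$, so there is no cancellation of the top term and $\deg S_{j+1} = j+1$ (indeed each $S_j$ is monic for $j\ge 0$). For $j < -1$ I would apply the reflection identity: writing $i = -j-2 \ge 0$, we have $S_j = -S_i$, so $\deg S_j = \deg S_i = i = -j-2$. Alternatively one runs the backward induction directly from $S_{-1}=0$ and $S_{-2} = -1$.

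I do not expect a genuine obstacle here; the lemma is elementary, as the paper notes. The only points requiring a little care are the sign/index bookkeeping for negative $j$: namely that $S_{-1}=0$ is precisely the degenerate value correctly excluded from the degree formula, and that the relevant symmetry is the shift $j \mapsto -j-2$ rather than $j \mapsto -j$, which is what reduces the negative-index case to the nonnegative one.
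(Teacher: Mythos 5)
Your proof is correct. The paper offers no argument for this lemma (it is stated as "elementary to verify"), and your two-sided induction on the recurrence, together with the reflection $S_{-j-2}=-S_j$ to handle the negative-index degree count, is exactly the standard verification the authors had in mind.
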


\begin{lemma}
Suppose the sequence $\{f_j\}_{j \in \BZ}$ satisfies the recurrence relation $f_{j+1}=\omega f_j-f_{j-1}$ for all integers $j$. Then $f_j=S_{j}(\omega)f_0-S_{j-1}(\omega)f_{-1}.$
\label{chev}
\end{lemma}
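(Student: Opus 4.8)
The plan is to prove the identity by a two-sided induction on $j\in\BZ$, exploiting that both sides satisfy the same second-order linear recurrence. First I would set $g_j := S_j(\omega)f_0 - S_{j-1}(\omega)f_{-1}$ and observe that $\{g_j\}$ satisfies $g_{j+1}=\omega g_j - g_{j-1}$ for every $j\in\BZ$: this is immediate from the defining recurrence $S_{j+1}(\omega)=\omega S_j(\omega)-S_{j-1}(\omega)$ of Definition~\ref{definition:chebyshev}, applied to each of the two Chebyshev terms and combined $\BC$-linearly. By hypothesis $\{f_j\}$ satisfies the same recurrence.

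Next I would check that $g_j$ and $f_j$ agree at the two consecutive indices $j=0$ and $j=-1$. For this I need the values $S_{-1}(\omega)=0$ and $S_{-2}(\omega)=-1$, obtained by running the Chebyshev recurrence backward: from $S_1=\omega S_0-S_{-1}$ with $S_0=1$, $S_1=\omega$ one gets $S_{-1}=0$, and then from $S_0=\omega S_{-1}-S_{-2}$ one gets $S_{-2}=-1$. Hence $g_0 = S_0(\omega)f_0 - S_{-1}(\omega)f_{-1}=f_0$ and $g_{-1}=S_{-1}(\omega)f_0 - S_{-2}(\omega)f_{-1}=f_{-1}$.

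Finally, since any solution $\{h_j\}$ of $h_{j+1}=\omega h_j - h_{j-1}$ is determined in both directions by two consecutive values (use $h_{j-1}=\omega h_j - h_{j+1}$ to descend), the agreement at $j=0,-1$ forces $f_j=g_j$ for all $j\in\BZ$: induct upward for $j\ge 1$ and downward for $j\le -2$. I do not expect any real obstacle — the lemma is elementary, as advertised — the only point needing a moment's care is that the argument runs over all of $\BZ$, so it must be an induction in both directions, which is precisely why one needs the backward values $S_{-1}$ and $S_{-2}$ and not merely $S_0$ and $S_1$. (Alternatively, one could substitute the closed form of Lemma~\ref{lemma:chebyshevidentities} and solve the recurrence explicitly, but the inductive argument is cleaner.)
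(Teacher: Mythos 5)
Your argument is correct and is precisely the elementary verification the paper has in mind (the paper omits the proof, noting only that it is "elementary to verify"): both sides satisfy the same second-order recurrence, and the backward values $S_{-1}(\omega)=0$, $S_{-2}(\omega)=-1$ give agreement at the two consecutive indices $j=0,-1$, forcing equality on all of $\BZ$ by induction in both directions. No gaps.
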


The following lemma can be verified by using Lemma \ref{lemma:chebyshevidentities}. 

\begin{lemma}\label{identities}
We have

a) $S^2_j(\omega)+S^2_{j-1}(\omega)-\omega S_j(\omega)S_{j-1}(\omega)=1$,

b) $S^2_{j}(\omega)-S^2_{j-1}(\omega)=S_{2j}(\omega)$ and

c) $S_{m-1}(\omega) \left( \omega+(\omega^2-4)S_{m-1}(\omega)S_{m}(\omega) \right)+S_m(\omega)=S_{3m}(\omega)$. 
\end{lemma}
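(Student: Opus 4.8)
The plan is to verify all three identities by passing to the substitution $\omega=\sigma+\sigma^{-1}$ and invoking the closed form $S_j(\omega)=(\sigma^{j+1}-\sigma^{-j-1})/(\sigma-\sigma^{-1})$ from Lemma~\ref{lemma:chebyshevidentities}. Since each side of (a), (b), (c) is a polynomial in $\omega$, it suffices to check equality as rational functions of $\sigma$; this has the further advantage of treating all integer values of the index uniformly, negatives included, so no separate base cases or inductive steps are needed. Throughout I would abbreviate $\delta=\sigma-\sigma^{-1}$, so that $\delta^2=\sigma^2-2+\sigma^{-2}=\omega^2-4$.

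For (a), substitute the closed form and clear the denominator $\delta^2$: the claim becomes $(\sigma^{j+1}-\sigma^{-j-1})^2+(\sigma^j-\sigma^{-j})^2-(\sigma+\sigma^{-1})(\sigma^{j+1}-\sigma^{-j-1})(\sigma^j-\sigma^{-j})=\delta^2$. Expanding each term as a Laurent polynomial in $\sigma$ and collecting, the powers $\sigma^{\pm(2j+2)}$ and $\sigma^{\pm 2j}$ cancel and one is left with $\sigma^2+\sigma^{-2}-2=\delta^2$. (Equivalently, one may set $g_j=S_j^2(\omega)+S_{j-1}^2(\omega)-\omega S_j(\omega)S_{j-1}(\omega)$ and observe $g_{j+1}-g_j=(S_{j+1}-S_{j-1})(S_{j+1}+S_{j-1}-\omega S_j)=0$ by the recurrence, so $g_j\equiv g_0=1$.) Part (b) is the same kind of computation: after clearing $\delta^2$ the claim reads $(\sigma^{j+1}-\sigma^{-j-1})^2-(\sigma^j-\sigma^{-j})^2=(\sigma^{2j+1}-\sigma^{-2j-1})(\sigma-\sigma^{-1})$, and both sides expand to $\sigma^{2j+2}-\sigma^{2j}-\sigma^{-2j}+\sigma^{-2j-2}$.

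Part (c) is the only one with real bookkeeping, and it is the step I expect to require the most care. The first move is to replace $\omega^2-4$ by $\delta^2$; this is precisely what makes the denominators cancel cleanly. Substituting the closed forms for $S_{m-1}$, $S_m$, $S_{3m}$ and multiplying through by $\delta$, the identity becomes the Laurent-polynomial statement $(\sigma^m-\sigma^{-m})\omega+(\sigma^m-\sigma^{-m})^2(\sigma^{m+1}-\sigma^{-m-1})+(\sigma^{m+1}-\sigma^{-m-1})=\sigma^{3m+1}-\sigma^{-3m-1}$. Writing $A=\sigma^m$ and expanding, every term has the form (a power of $A$) times $\sigma^{\pm 1}$; collecting the coefficient of $\sigma$ gives $(A-A^{-1})+(A^3-2A+A^{-1})+A=A^3$, and the coefficient of $\sigma^{-1}$ gives $(A-A^{-1})+(-A+2A^{-1}-A^{-3})-A^{-1}=-A^{-3}$, so the left side equals $A^3\sigma-A^{-3}\sigma^{-1}=\sigma^{3m+1}-\sigma^{-3m-1}$, as required. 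The only thing to watch is the sign and exponent bookkeeping in these expansions; there is no conceptual obstacle.
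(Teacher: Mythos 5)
Your proof is correct, and it is exactly the verification the paper intends: the paper gives no details, saying only that the lemma "can be verified by using Lemma~\ref{lemma:chebyshevidentities}," i.e.\ by substituting the closed form $S_j(\omega)=(\sigma^{j+1}-\sigma^{-j-1})/(\sigma-\sigma^{-1})$, which is precisely what you do (and your Laurent-polynomial bookkeeping in parts (a)--(c) checks out). No gaps.
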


We now simplify the polynomial $\varphi$ by writing the trace polynomials in terms of these Chebyshev polynomials.
\begin{proposition}\label{proposition:tchebyshev}
We have $$t=\left( xS_m(z)-yS_{m-1}(z) \right)\left( yS_m(z)-xS_{m-1}(z) \right)-z\left(S^2_m(z)+S^2_{m-1}(z)\right)+4S_m(z)S_{m-1}(z).$$
\end{proposition}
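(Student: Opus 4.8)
The plan is to compute the trace polynomial $t = P_{w_k}$ directly from the definition $w_k = (ab^{-1})^m ab (a^{-1}b)^m$, using the Cayley--Hamilton relation in $\SL_2(\BC)$ together with Lemma~\ref{chev} to resolve the powers $(ab^{-1})^m$ and $(a^{-1}b)^m$ into closed form. The first step is to write $\rho((ab^{-1})^m)$ in terms of $\rho(ab^{-1})$ and the identity: since $ab^{-1}$ has trace $z$, Cayley--Hamilton gives $\rho((ab^{-1})^m) = S_{m-1}(z)\rho(ab^{-1}) - S_{m-2}(z)I$, and more symmetrically one can write it as $S_m(z)I - S_{m-1}(z)\rho(ba^{-1})$ or $S_m(z)I - S_{m-1}(z)\rho(b^{-1}a)\cdot(\text{conjugation bookkeeping})$; the precise form that makes the final answer cleanest should be chosen by matching against the stated formula. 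Likewise $\rho((a^{-1}b)^m)$ expands via $S_{m-1}(z), S_m(z)$ since $a^{-1}b$ also has trace $z$ (note $(ab^{-1})^{-1} = ba^{-1}$, so $a^{-1}b$ and $ab^{-1}$ are conjugate and share the trace $z$).

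Next I would substitute these expansions into $w_k = (ab^{-1})^m \cdot ab \cdot (a^{-1}b)^m$, obtaining $\rho(w_k)$ as a $\BC[x,y,z]$-linear combination of four matrices of the form $\rho(ab^{-1})^{\epsilon}\,\rho(ab)\,\rho(a^{-1}b)^{\delta}$ with $\epsilon,\delta \in \{0,1\}$, with coefficients that are products of $S_m(z)$ and $S_{m-1}(z)$. Taking the trace then reduces the problem to evaluating the four traces $\tr\rho(ab)$, $\tr\rho(ab^{-1}\cdot ab) = \tr\rho(ab^{-1}ab)$, $\tr\rho(ab\cdot a^{-1}b)$, and $\tr\rho(ab^{-1}ab a^{-1}b)$ as explicit polynomials in $x,y,z$. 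These are standard low-complexity trace identities: $\tr\rho(ab) = xy - \tr\rho(ab^{-1}) = xy - z$; $\tr\rho(a^2 b) = x\,\tr\rho(ab) - \tr\rho(b) = x(xy-z) - y$; and the longer words are handled by repeated application of $\tr(UV) + \tr(UV^{-1}) = \tr(U)\tr(V)$ and $\tr(UV) = \tr(VU)$. I would compute each of the four, assemble the linear combination, and then collect terms in $S_m(z)$ and $S_{m-1}(z)$.

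The final step is the algebraic reconciliation: after collecting, the coefficient of $S_m^2(z)$, of $S_{m-1}^2(z)$, and of $S_m(z)S_{m-1}(z)$ must be simplified using the identities in Lemma~\ref{identities} — in particular part (a), $S_m^2 + S_{m-1}^2 - zS_mS_{m-1} = 1$, is what converts the raw expansion into the stated compact form $(xS_m - yS_{m-1})(yS_m - xS_{m-1}) - z(S_m^2 + S_{m-1}^2) + 4S_mS_{m-1}$. Expanding the claimed formula, the cross term is $(x^2+y^2)S_mS_{m-1} - xy(S_m^2 + S_{m-1}^2) + \dots$, so one checks that the $S_m^2$, $S_{m-1}^2$, $S_mS_{m-1}$, and constant coefficients on each side agree as elements of $\BC[x,y,z]$; any genuine residual mismatch is absorbed by Lemma~\ref{identities}(a). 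I expect the main obstacle to be purely bookkeeping: keeping the four matrix products and their traces correctly signed and correctly ordered (the non-commutativity means $(a^{-1}b)^m$ sits on the right and cannot be moved), and being disciplined about which of the several equivalent Chebyshev expansions of $(ab^{-1})^m$ to use so that the $S_{m-1}, S_m$ pattern emerges symmetrically rather than as a $S_{m-1}, S_{m-2}$ pattern that then needs an extra recurrence step. There is no conceptual difficulty — it is an identity in the coordinate ring of $X(F_{a,b}) \cong \BC^3$ — so once the expansions are set up carefully the verification is a finite computation, and I would present it as such, quoting Lemma~\ref{chev} and Lemma~\ref{identities} at the two places where they are needed.
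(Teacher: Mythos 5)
Your proposal is correct and takes essentially the same route as the paper: the paper also applies Lemma~\ref{chev} twice to peel off $(ab^{-1})^m$ and $(a^{-1}b)^m$, choosing the expansion $f_m=S_m(z)f_0-S_{m-1}(z)f_{-1}$ (exponents $0$ and $-1$, the ``clean'' choice you anticipate) so that the four residual words reduce to $ab$, $ba$, $a^2$, $b^2$, giving $(S_m^2+S_{m-1}^2)(xy-z)-S_mS_{m-1}(x^2+y^2-4)$. The one small simplification over your outline is that no appeal to Lemma~\ref{identities}(a) is needed at the end: this expression coincides with the stated formula identically upon expanding the product $(xS_m-yS_{m-1})(yS_m-xS_{m-1})$.
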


\begin{proof}
By definition, $t=P_{w_k}$. By applying Lemma \ref{chev} twice, we have  
\begin{eqnarray*}
P_{w_k} &=& P_{(ab^{-1})^mab(a^{-1}b)^m} \\
        &=& S^2_m(z)P_{ab}+S^2_{m-1}(z)P_{(ab^{-1})^{-1}ab(a^{-1}b)^{-1}}-S_m(z)S_{m-1}(z)(P_{(ab^{-1})^{-1}ab}+P_{ab(a^{-1}b)^{-1}})   \\
        &=& S^2_m(z)P_{ab}+S^2_{m-1}(z)P_{ba}-S_m(z)S_{m-1}(z)(P_{b^2}+P_{a^2}) \\        
        &=& (S^2_m(z)+S^2_{m-1}(z))(xy-z)-S_m(z)S_{m-1}(z)(x^2+y^2-4).
\end{eqnarray*}
The proposition follows.
\end{proof}

\begin{proposition}
The polynomial $\varphi(x,y,z) \in \BC^3[x,y,z]$ is 
$$\varphi(x,y,z)=\Big(xyz+4-x^2-y^2-z^2\Big)\Big(S_n(t)S_{m-1}(z)-S_{n-1}(t)S_m(z)\Big)$$
where $t$ is as in Proposition~\ref{proposition:tchebyshev}.
\end{proposition}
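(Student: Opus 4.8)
The plan is to compute $\varphi(x,y,z) = P_{rab} - P_{bar}$ directly, where $r = w_k^n (ab^{-1})^m$, by repeatedly applying the Chebyshev recursion encoded in Lemma~\ref{chev} to reduce the powers $w_k^n$ and $(ab^{-1})^m$ to small words. Writing $t = P_{w_k}$ and $z = P_{ab^{-1}}$, Lemma~\ref{chev} lets us expand $P_{w_k^n \cdot v}$ as $S_n(t) P_{w_k v} - S_{n-1}(t) P_v$ (using that traces of $w_k^{j} v$ satisfy the relevant two-term recursion in $j$ with multiplier $\tr w_k = t$), and similarly to strip the $(ab^{-1})^m$ factors. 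So the first step is to write $P_{rab}$ and $P_{bar}$ each as an $S_n(t), S_{n-1}(t)$-combination of traces of shorter words, then peel off the $(ab^{-1})^m$ using the $S_m(z), S_{m-1}(z)$ recursion, landing at a point where the remaining trace polynomials are explicit low-complexity polynomials in $x,y,z$ (involving $w_k$, $ab$, $ba$, $a^2$, $b^2$, and their interactions, exactly as in the proof of Proposition~\ref{proposition:tchebyshev}).

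The second step is to collect the resulting expression and recognize it as a product. After the reductions, $\varphi$ will be a bilinear-type expression in $\{S_n(t), S_{n-1}(t)\}$ and $\{S_m(z), S_{m-1}(z)\}$ with coefficients that are polynomials in $x,y,z$; the claim is that it factors as $\big(xyz + 4 - x^2 - y^2 - z^2\big)\big(S_n(t)S_{m-1}(z) - S_{n-1}(t)S_m(z)\big)$. I would verify the factorization by checking that the coefficient of $S_n(t)S_{m-1}(z)$ equals $xyz+4-x^2-y^2-z^2$, the coefficient of $S_{n-1}(t)S_m(z)$ equals its negative, and the cross terms (coefficients of $S_n(t)S_m(z)$ and of $S_{n-1}(t)S_{m-1}(z)$) vanish identically. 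The vanishing of the cross terms is where the identities in Lemma~\ref{identities} — particularly parts (a) and (b), relating $S_m^2 + S_{m-1}^2$, $S_m S_{m-1}$, and $S_{2m}$ — should do the work, together with the formula for $t$ from Proposition~\ref{proposition:tchebyshev} when $t$ itself needs to be re-expanded inside a coefficient.

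I expect the main obstacle to be the bookkeeping in the first step: correctly expanding $P_{rab}$ and $P_{bar}$ requires care about which words the recursion is applied to (the order of letters matters for the non-reversed vs.\ reversed word, and one must exploit the palindromic structure $P_{\overleftarrow{r}ab} = P_{bar}$ already established), and there are genuinely many terms before cancellation. A secondary subtlety is handling negative exponents and negative $m,n$ uniformly — but since the Chebyshev polynomials $S_j$ are defined for all integers $j$ and the recursion in Lemma~\ref{chev} holds for all integers, the same computation goes through without case distinctions. Once the expansion is in hand, factoring out $xyz+4-x^2-y^2-z^2$ should be a matter of matching coefficients and invoking Lemma~\ref{identities}; I would double-check the final identity on a couple of small cases (e.g.\ $m=n=1$, which is $J(3,3)$) to catch sign errors.
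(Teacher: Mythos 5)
Your proposal follows essentially the same route as the paper: expand $P_{rab}-P_{bar}$ via Lemma~\ref{chev} first in the $w_k^n$ powers and then in the $(ab^{-1})^m$ powers, reduce to traces of short words, and observe that the cross terms vanish while the surviving coefficients both equal $xyz+4-x^2-y^2-z^2$. The only minor slip is attributing the cross-term cancellation to Lemma~\ref{identities}; in fact it follows from elementary trace identities ($P_{uv}=P_{vu}$ and $P_u=P_{u^{-1}}$), and Lemma~\ref{identities} is not needed here.
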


\begin{proof}
As mentioned above, by \cite[Thm.1]{MR3103879} $X(k,l)$ is the zero set of $\varphi(x,y,z)$ and $P_{\overleftarrow{r}ab}=P_{bar}$.
By applying Lemma \ref{chev} we have
\begin{eqnarray*}
P_{rab}-P_{bar} &=& P_{w_k^n(ab^{-1})^mab}-P_{baw_k^n(ab^{-1})^m} \\
                &=& S_n(t)(P_{(ab^{-1})^mab}-P_{ba(ab^{-1})^m}) - S_{n-1}(t)(P_{w_k^{-1}(ab^{-1})^mab}-P_{baw_k^{-1}(ab^{-1})^m}) \\
                &=& S_n(t)(P_{(ab^{-1})^mab}-P_{ba(ab^{-1})^m}) - S_{n-1}(t)(P_{(a^{-1}b)^m}-P_{ab(a^{-1}b)^m(ba)^{-1}})
\end{eqnarray*}
where
\begin{eqnarray*}
P_{(ab^{-1})^mab}-P_{ba(ab^{-1})^m} &=& S_m(z)(P_{ab}-P_{ba}) - S_{m-1}(z)(P_{(ab^{-1})^{-1}ab}-P_{ba(ab^{-1})^{-1}}) \\
&=& - S_{m-1}(z)(P_{b^2}-P_{baba^{-1}}) \\
&=& S_{m-1}(z)(xyz+4-x^2-y^2-z^2), \\
P_{(a^{-1}b)^m}-P_{ab(a^{-1}b)^m(ba)^{-1}} &=& S_m(z)(P_{1}-P_{ab(ba)^{-1}}) - S_{m-1}(z)(P_{(a^{-1}b)^{-1}}-P_{ab(a^{-1}b)^{-1}(ba)^{-1}}) \\
&=& S_{m}(z)(xyz+4-x^2-y^2-z^2).
\end{eqnarray*}
Hence 
$$P_{rab}-P_{bar}=(xyz+4-x^2-y^2-z^2)(S_n(t)S_{m-1}(z)-S_{n-1}(t)S_m(z)).$$
\end{proof}

The character variety $X(k,l)$ is clearly reducible.  The set of reducible characters, $X_{red}(k,l)$, can easily be determined, as in  \cite{MR3078072}, for example. We have the following, from which Theorem~\ref{theorem:naturalmodel} follows immediately. 
\begin{proposition}\label{proposition:naturalmodel} 
The vanishing set of $xyz+4-x^2-y^2-z^2$ in $\BC^3(x,y,z)$ is the set of characters of reducible representations of $\pi_1(k,l) \rightarrow\SL_2(\BC)$.  

A natural model for the algebraic set $X_{irr}(k,l)$ is the vanishing set of $S_n(t)S_{m-1}(z)-S_{n-1}(t)S_m(z)$ in $\BC^3(x,y,z)$ where $t$ is as in Proposition~\ref{proposition:tchebyshev}.
\end{proposition}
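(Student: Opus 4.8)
The plan is to read off both assertions from the factorization
$\varphi = (xyz+4-x^2-y^2-z^2)\bigl(S_n(t)S_{m-1}(z)-S_{n-1}(t)S_m(z)\bigr)$ of the preceding proposition, together with the classical description of reducible $\SL_2(\BC)$ characters of a two-generator group. Set $Z := \{S_n(t)S_{m-1}(z)-S_{n-1}(t)S_m(z)=0\}\subset \BC^3(x,y,z)$ with $t$ as in Proposition~\ref{proposition:tchebyshev}, and let $Q := \{xyz+4-x^2-y^2-z^2=0\}$ denote the cubic. Since $\varphi$ vanishes on both, we know from the cited theorem of \cite{MR3103879} that $X(k,l)=V(\varphi)=Q\cup Z$, and every point of $Q\cup Z$ is the character of some $\rho:\pi_1(k,l)\to\SL_2(\BC)$.

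First I would identify $Q$ with $X_{red}(k,l)$. Writing $z_0=\tr\rho(ab)=xy-z$, the commutator-trace identity $\tr\rho([a,b])=x^2+y^2+z_0^2-xyz_0-2$ specializes, after substituting $z_0=xy-z$, to $\tr\rho([a,b])-2=-(xyz+4-x^2-y^2-z^2)$. By the classical fact that $\langle A,B\rangle\le\SL_2(\BC)$ has a common eigenvector if and only if $\tr[A,B]=2$, the set $Q$ is exactly the locus of triples $(\tr\rho(a),\tr\rho(b),\tr\rho(ab^{-1}))$ coming from reducible representations of $F_{a,b}$. Two small points finish this: every point of $Q$ is realized by a \emph{diagonal} representation of $F_{a,b}$ (pick eigenvalues $\alpha,\beta$ of $a,b$ with $\alpha+\alpha^{-1}=x$, $\beta+\beta^{-1}=y$; the $\pm$ ambiguity in $\beta$ accounts for the two values of $z=\alpha\beta^{\mp1}+\alpha^{-1}\beta^{\pm1}$ over the $(x,y)$-plane), and any diagonal representation of $F_{a,b}$ factors through $\pi_1(k,l)=\la a,b\mid r=\overleftarrow r\ra$ because $r$ and $\overleftarrow r$ have equal image in the abelianization. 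Hence $X_{red}(k,l)=Q$.

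Finally I would deduce the model for $X_{irr}(k,l)$. Since no character of an irreducible representation satisfies $\tr[a,b]=2$, no such character lies on $Q$, so every irreducible character lies in $Z$ and therefore $X_{irr}(k,l)\subseteq Z$. Conversely, any point of $Z\setminus Q$ is the character of a representation of $\pi_1(k,l)$ that is not reducible, hence irreducible, so $Z\setminus Q\subseteq X_{irr}(k,l)$; passing to closures yields $Z\subseteq X_{irr}(k,l)$ \emph{provided no irreducible component of $Z$ is contained in $Q$}, equivalently provided $xyz+4-x^2-y^2-z^2$ (which is irreducible over $\BC$) does not divide $S_n(t)S_{m-1}(z)-S_{n-1}(t)S_m(z)$ in $\BC[x,y,z]$. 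This nonvanishing is the crux of the argument. I would establish it by exhibiting a point of $Q$ where the second factor does not vanish: at $x=y=z=2$ one has $S_j(2)=j+1$, a direct substitution into Proposition~\ref{proposition:tchebyshev} gives $t=2$, and then $S_n(2)S_{m-1}(2)-S_{n-1}(2)S_m(2)=(n+1)m-n(m+1)=m-n\neq 0$ whenever $k\neq l$. The remaining case $k=l$, where $Z$ genuinely acquires the extra component $\{t=z\}\subset Q$, needs a different test point and is precisely the dichotomy recorded in Theorem~\ref{theorem:summary1}; in all cases $Z$ is the natural model of $X_{irr}(k,l)$ in $\BC^3(x,y,z)$, and Theorem~\ref{theorem:naturalmodel} follows at once. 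The main obstacle is thus not the structural bookkeeping but verifying this (non)divisibility uniformly in $m$ and $n$, and keeping track of how the $k=l$ case forces a component into the reducible locus.
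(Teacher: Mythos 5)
Your identification of the reducible locus is correct and fills in what the paper disposes of by citation (the paper simply refers to \cite{MR3078072} for $X_{red}$ and then reads both claims off the factorization of $\varphi$); the substitution $z_0=xy-z$ into the commutator-trace identity and the realization of all of $Q$ by diagonal representations factoring through $\pi_1(k,l)$ are exactly the standard argument. Your framing of the second claim --- that one must rule out an irreducible component of $Z$ lying inside $Q$, equivalently that the irreducible cubic does not divide $S_n(t)S_{m-1}(z)-S_{n-1}(t)S_m(z)$ --- is also the right crux, and the test point $(2,2,2)$ with $S_j(2)=j+1$ correctly settles it when $k\neq l$.

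The gap is the case $k=l$, which you leave unfinished and, worse, misdescribe: the assertion that $Z$ ``acquires the extra component $\{t=z\}\subset Q$'' is false, and if it were true it would \emph{contradict} the proposition you are proving, since $Z$ would then contain a surface of reducible characters. The surface $\{t=z\}$ is the canonical component $X_0(l,l)$ (Lemma~\ref{lemma:canonicalcomponent}); it contains the character of the discrete faithful representation, which is irreducible, so it cannot lie in $Q$. (Algebraically: using $S_m^2+S_{m-1}^2=1+zS_mS_{m-1}$ one gets $t=xy-z+S_m(z)S_{m-1}(z)\,q$ where $q=xyz+4-x^2-y^2-z^2$, so $t-z=xy-2z+S_mS_{m-1}\,q$, and $q\nmid(xy-2z)$ for degree reasons.) The same identity gives a uniform way to close the argument for all $k,l$ with $|k|,|l|>1$, avoiding case-by-case test points: on $Q$ one has $t=xy-z$, so the restriction of the second factor to $Q$ is $h(xy-z,z)$ with $h(t,z)=S_n(t)S_{m-1}(z)-S_{n-1}(t)S_m(z)$; since $h$ is a nonzero polynomial in two variables (its bidegree is $(\lfloor|k|/2\rfloor,\lfloor|l|/2\rfloor)$ with both entries positive, Lemma~\ref{lemma:bidegree}) and $(x,y,z)\mapsto(xy-z,z)$ is dominant from $Q$ onto $\BC^2$ (take $x=\alpha+\alpha^{-1}$, $y=\beta+\beta^{-1}$, $z=\alpha\beta^{-1}+\alpha^{-1}\beta$, so $xy-z=\alpha\beta+(\alpha\beta)^{-1}$), the second factor does not vanish identically on $Q$, hence $q$ divides it in no case. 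With that replacement your argument is complete; note that the paper itself handles this point only implicitly, via the later identification in Section~\ref{section:smoothnessandirreducibility} of every component of $Z$ with a component carrying irreducible characters.
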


In light of this, we wish to understand the vanishing set of $S_n(t)S_{m-1}(z)-S_{n-1}(t)S_m(z)$.  The equation $S_n(t)S_{m-1}(z)=S_{n-1}(t)S_m(z)$ can be written as 
\[ \frac{S_n(t)}{S_{n-1}(t)} = \frac{S_m(z)}{S_{m-1}(z)} \]
when $S_{n-1}(t)S_{m-1}(z)\neq 0$, so we can think of it as lying in a product of projective lines.  We will make use of this approach when proving smoothness and irreducibility. 
\begin{definition}
Let  $V(k,l)$ be the vanishing set of $S_n(t)S_{m-1}(z)-S_{n-1}(t)S_m(z)$ in $\BC^3(x,y,z)$.

\end{definition}

By Propostion~\ref{proposition:naturalmodel} the components of $X(k,l)$ containing characters of irreducible representations, those included in $X_{irr}(k,l)$, are contained in  $V(k,l)$ and $V(k,l)$ is a natural model for this set.

\section{The structure of $V(k,l)$}\label{section:birational} 

The set $V(k,l)$ is the closure of the set of characters of irreducible representations.  The equation $S_n(t)S_{m-1}(z)-S_{n-1}(t)S_m(z)$ is relatively simple, except that $t$ itself is a function of the natural variables $x,y,$ and $z$.  Explicitly, 
 by Proposition~\ref{proposition:tchebyshev} 
$$t=\left( xS_m(z)-yS_{m-1}(z) \right)\left( yS_m(z)-xS_{m-1}(z) \right)-z\left(S^2_m(z)+S^2_{m-1}(z)\right)+4S_m(z)S_{m-1}(z).$$

We will show that there is a relatively simple model for $X_{irr}(k,l)$ up to birational equivalence.

\begin{definition}
Let $u=xS_m(z)-yS_{m-1}(z)$ and $v=yS_m(z)-xS_{m-1}(z)$. 
\end{definition}
It follows that  $$t=uv-z\big(S^2_m(z)+S^2_{m-1}(z)\big)+4S_m(z)S_{m-1}(z).$$
By the definitions of $u$ and $v$, $$x=\frac{uS_m(z)+vS_{m-1}(z)}{S^2_m(z)-S^2_{m-1}(z)} \qquad \text{and} \qquad y=\frac{vS_m(z)+uS_{m-1}(z)}{S^2_m(z)-S^2_{m-1}(z)}.$$
We will show that this substitution of $u$ and $v$ for $x$ and $y$ corresponds to a birational map, simplifying the definition of $t$.  Then we will show that substituting $t$ for $u$ is another birational map, thus eliminating the problem of having nested variables.  This has the fortunate consequence that the equation $S_n(t)S_{m-1}(z)-S_{n-1}(t)S_m(z)$ contains no $u$, so we can conclude that the algebraic set $V(k,l)$ is birational to the product of a curve and $\BC$.

\begin{definition}
Let $U(k,l)$ be the vanishing set of 
\[ S_n(t)S_{m-1}(z)-S_{n-1}(t)S_m(z) \]
in $\BC^3(u,v,z)$ where 
$$t=uv-z \big(S^2_m(z)+S^2_{m-1}(z)\big)+4S_m(z)S_{m-1}(z).$$
\end{definition}

Before showing that $V(k,l)$ is birational to $U(k,l)$ we prove a lemma.

\begin{lemma}\label{lemma:zeroset}
On $V(k,l)$, $S^2_m(z)-S^2_{m-1}(z)=0$ only for a set of codimension one. 
\end{lemma}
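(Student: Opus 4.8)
The plan is to show that the locus where $S^2_m(z) - S^2_{m-1}(z) = 0$ meets $V(k,l)$ in something of dimension at most one, i.e. a proper closed subset. First I would use Lemma~\ref{identities}(b) to rewrite $S^2_m(z) - S^2_{m-1}(z) = S_{2m}(z)$, so the bad locus inside $\BC^3(x,y,z)$ is the union of planes $\{z = \zeta\}$ over the finitely many roots $\zeta$ of $S_{2m}$. Since there are only finitely many such $\zeta$, it suffices to fix one root $\zeta$ and show that $V(k,l) \cap \{z = \zeta\}$ is not all of the plane $\{z=\zeta\}$; then each such intersection is a proper closed subset of a plane, hence at most a curve, and the finite union still has dimension at most one, giving codimension one in the surface $V(k,l)$.

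So fix $\zeta$ with $S_{2m}(\zeta) = 0$, equivalently $S_m(\zeta) = \pm S_{m-1}(\zeta)$. Substituting $z = \zeta$ into the formula of Proposition~\ref{proposition:tchebyshev}, the cross term $\big(xS_m(\zeta) - yS_{m-1}(\zeta)\big)\big(yS_m(\zeta) - xS_{m-1}(\zeta)\big)$ becomes, using $S_m(\zeta) = \epsilon S_{m-1}(\zeta)$ with $\epsilon = \pm 1$, a multiple of $(x - y)(y - x) = -(x-y)^2$ times $S_{m-1}^2(\zeta)$ (up to the sign $\epsilon$), plus the terms $-\zeta(S_m^2(\zeta) + S_{m-1}^2(\zeta)) + 4S_m(\zeta)S_{m-1}(\zeta)$ which are constants. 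Hence on $\{z = \zeta\}$ the function $t$ depends only on $x - y$ (through $(x-y)^2$), not on $x$ and $y$ independently; in particular $t$ is non-constant as a function on the plane. The defining equation $S_n(t)S_{m-1}(\zeta) - S_{n-1}(t)S_m(\zeta) = 0$ then reads $\big(S_n(t) - \epsilon S_{n-1}(t)\big)S_{m-1}(\zeta) = 0$. Here $S_{m-1}(\zeta) \neq 0$: if $S_{m-1}(\zeta)$ were also zero then $S_m(\zeta) = 0$ too, contradicting that consecutive Chebyshev values cannot both vanish (visible from Lemma~\ref{lemma:chebyshevidentities}, or from $S_m^2 + S_{m-1}^2 - \omega S_mS_{m-1} = 1$ in Lemma~\ref{identities}(a)). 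So the equation becomes $S_n(t) - \epsilon S_{n-1}(t) = 0$, a nontrivial polynomial condition on $t$; since $t$ takes infinitely many values on the plane $\{z=\zeta\}$ and a nonzero polynomial in $t$ has only finitely many roots, the vanishing locus is a proper subset of the plane. This gives the claim.

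The only point requiring care — and the main obstacle — is the degenerate sub-case $n = 0$ or small $|n|$, and symmetrically the possibility that $S_n(t) - \epsilon S_{n-1}(t)$ is identically zero as a polynomial in $t$. For $n = 0$ one has $S_0 = 1$, $S_{-1} = 0$, so the condition is $1 = 0$, i.e. $V(k,l) \cap \{z = \zeta\}$ is empty — even better. In general $S_n - \epsilon S_{n-1}$ has positive degree for $|n| \geq 1$ by Lemma~\ref{lemma:chebyshevidentities}, so it is never the zero polynomial (its leading term is that of $S_n$ or $S_{n-1}$, never cancelled), and the argument goes through uniformly. I would also remark that the hypotheses $|k|, |l| > 1$ from the running assumptions ensure $n \neq 0$ when needed, but as noted the $n = 0$ case only helps. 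Assembling: the bad locus on $V(k,l)$ is contained in a finite union of curves, hence has codimension one, proving Lemma~\ref{lemma:zeroset}.
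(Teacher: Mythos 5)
Your proof is correct and follows essentially the same route as the paper: restrict to a root $\zeta$ of $S_{2m}=S_m^2-S_{m-1}^2$, use $S_m(\zeta)=\epsilon S_{m-1}(\zeta)$ with $S_{m-1}(\zeta)\neq 0$ to reduce the defining equation on $\{z=\zeta\}$ to $S_n(t)-\epsilon S_{n-1}(t)=0$, and observe that $t$ restricted to that plane is a non-constant polynomial, so the finitely many admissible $t$-values cut out at most a curve. (One cosmetic slip: for $\epsilon=-1$ the cross term is $S_{m-1}^2(\zeta)(x+y)^2$, so $t$ depends on $x+y$ rather than $x-y$, exactly as in the paper's second case; this does not affect your argument, which only uses non-constancy of $t$.)
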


\begin{proof}
By definition, $S_j(z)$ is a Chebyshev polynomial, and by Lemma~\ref{identities} we have that $S^2_m(z)-S^2_{m-1}(z)=S_{2m}(z)$. Moreover,  letting $z=\sigma+\sigma^{-1}$ we can write 
\[ S_{2m}(\sigma+\sigma^{-1})= \frac{\sigma^{2m+1}-\sigma^{-2m-1}}{\sigma-\sigma^{-1}}.\]
Therefore, if $S_m^2(z)-S_{m-1}^2(z)=0$ then $\sigma^{2m+1}-\sigma^{-2m-1}=0$ and so $\sigma^{4m+2}=1$.
 It follows that $\sigma=e^{2 \pi i s/(4m+2)}=e^{\pi i s/(2m+1)}$ for some $0\leq s \leq 4m+2$.  When $s=2r$ is even,  ($1 \le r \le m$)
\[ z= \sigma+\sigma^{-1} = 2 Re(\sigma)= 2\cos \Big( \tfrac{2\pi r}{2m+1} \Big)\]
and $z$ is a root of $S_m(z)+S_{m-1}(z)$.
When $s=2r+1$ is odd ($0 \le r \le m-1$)
\[ z=2\cos \Big(\tfrac{(2r+1)\pi }{2m+1} \Big) \]
and $z$ is a root of $S_m(z)-S_{m-1}(z)$.

First, we will show that on $V(k,l)$, $S_m(z)-S_{m-1}(z)=0$ only for a set of dimension one. 
Note that $z=2\cos \big( \tfrac{(2r+1)\pi }{2m+1}\big)$, where $0 \le r \le m-1$. 
By Lemma~\ref{identities},  $S^2_m(z)+S^2_{m-1}(z)-zS_m(z)S_{m-1}(z)=1$. Since $S_m(z)=S_{m-1}(z)$, we obtain $S^2_m(z)=\frac{1}{2-z}$ and 
\begin{eqnarray*}
t &=&  -S_m^2(z) \Big(( x -y)^2  +2z -4  \Big) \\
  &=&   \frac{1}{z-2} \Big(( x -y)^2  +2(z -2)  \Big) \\
    &=&   \frac{( x -y)^2}{z-2}  +2.
\end{eqnarray*}
We conclude that 
\[ (x-y)^2 = (z-2)(t-2).\]
On $V(k,l)$,  $S_n(t)S_{m-1}(z)-S_{n-1}(t)S_m(z)=0$. Since $S_m(z)=S_{m-1}(z)$ we get
\[ S_m(z) \Big( S_n(t)-S_{n-1}(t)\Big) =0.\]
Since $z$ is as above, we see that $S_m(z)\neq 0 $ since $S_m^2(z)=\frac1{2-z}$.  Hence $S_n(t)-S_{n-1}(t)=0$. It follows that $t=2\cos\big(\tfrac{(2s+1)\pi}{2n+1}\big)$ (where $0 \le s \le n-1$). We conclude that 
$$(x-y)^2=4\Big(\cos \big(\tfrac{(2r+1)\pi }{2m+1}\big)-1\Big)\Big(\cos\big(\tfrac{(2s+1)\pi}{2n+1}\big)-1\Big).$$ 
This defines $x-y$ explicitly, and therefore determines a set of dimension one in $V(k,l)$.  Since the dimension of $V(k,l)$ is two, this is a codimension one set.

We complete the proof by showing that
on $V(k,l)$, $S_m(z)+S_{m-1}(z)=0$ only for a set of dimension one. 
Note that $z=2\cos\big(\tfrac{2\pi r}{2m+1}\big)$, where $1 \le r \le m$.  We have $S^2_m(z)+S^2_{m-1}(z)-zS_m(z)S_{m-1}(z)=1$. Since $S_m(z)=-S_{m-1}(z)$, we obtain $S^2_m(z)=\frac{1}{2+z}$ and
\begin{eqnarray*}
t &=&  S_m^2(z) \Big(  (x +y )^2 -2z -4\Big)\\
 &=&  \frac{1}{2+z} \Big(  (x +y )^2 -2(z +2)\Big)\\
&=& \frac{(x+y)^2}{2+z}-2.\\
\end{eqnarray*}
We conclude that 
\[ (x+y)^2 = (t+2)(z+2).\]
On $V(k,l)$,  $S_n(t)S_{m-1}(z)-S_{n-1}(t)S_m(z)=0$, so we have 
\[ S_m(z) \Big(S_n(t)+S_{n-1}(t)\Big)=0. \]
Since $z$ is as above, we conclude that $S_n(t)+S_{n-1}(t)=0$.
 This means $t=2\cos\big(\tfrac{2\pi s}{2n+1}\big)$ (where $1 \le s \le n$). Hence
$$(x+y)^2= 4\Big(\cos\big(\tfrac{2\pi r}{2m+1}\big)+1\Big)\Big(\cos\big(\tfrac{2\pi s}{2n+1}\big)+1\Big).$$ 
This defines $x+y$ explicitly, and therefore determines a set of dimension one in $V(k,l)$.  Since the dimension of $V(k,l)$ is two, this is a codimension one set. 
\end{proof}

The following now easily follows.

\begin{proposition}\label{proposition:birational1}
The set $V(k,l) \subset \BC^3(x,y,z)$ is birational to $U(k,l) \subset \BC^3(u,v,z)$.
\end{proposition}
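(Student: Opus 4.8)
The plan is to exhibit an explicit birational map between $V(k,l)\subset\BC^3(x,y,z)$ and $U(k,l)\subset\BC^3(u,v,z)$ realized by the change of coordinates already set up before the statement. Namely, define $\Phi\colon(x,y,z)\mapsto(u,v,z)$ by $u=xS_m(z)-yS_{m-1}(z)$, $v=yS_m(z)-xS_{m-1}(z)$, keeping $z$ fixed; this is a morphism on all of $\BC^3$. In the reverse direction, use the formulas
\[
x=\frac{uS_m(z)+vS_{m-1}(z)}{S_m^2(z)-S_{m-1}^2(z)},\qquad y=\frac{vS_m(z)+uS_{m-1}(z)}{S_m^2(z)-S_{m-1}^2(z)},
\]
which define a rational map $\Psi$ that is the inverse of $\Phi$ wherever $S_m^2(z)-S_{m-1}^2(z)\neq 0$. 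By Lemma~\ref{lemma:chebyshevidentities} and Lemma~\ref{identities}(b), $S_m^2(z)-S_{m-1}^2(z)=S_{2m}(z)$ is not the zero polynomial, so $\Phi$ and $\Psi$ are mutually inverse rational maps $\BC^3(x,y,z)\dashrightarrow\BC^3(u,v,z)$.

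The key point is that $\Phi$ carries the defining equation of $V(k,l)$ to that of $U(k,l)$. By Proposition~\ref{proposition:tchebyshev}, the polynomial $t=P_{w_k}$, viewed as a function of $x,y,z$, equals $uv-z\big(S_m^2(z)+S_{m-1}^2(z)\big)+4S_m(z)S_{m-1}(z)$ after substituting the definitions of $u$ and $v$ — this is precisely the formula defining $t$ in the definition of $U(k,l)$. Hence the polynomial $S_n(t)S_{m-1}(z)-S_{n-1}(t)S_m(z)$ pulls back under $\Psi$ (equivalently, pushes forward under $\Phi$) to the defining polynomial of $U(k,l)$ in the $(u,v,z)$ coordinates. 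Therefore $\Phi$ restricts to a rational map $V(k,l)\dashrightarrow U(k,l)$ and $\Psi$ restricts to a rational map $U(k,l)\dashrightarrow V(k,l)$, and these are inverse to each other on the loci where $S_{2m}(z)\neq 0$.

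To upgrade this to a genuine birational equivalence of the (irreducible components of the) surfaces, I must check that the indeterminacy/exceptional locus does not swallow a whole component — i.e. that $\{S_{2m}(z)=0\}\cap V(k,l)$ is a proper closed subset of each component. This is exactly the content of Lemma~\ref{lemma:zeroset}: on $V(k,l)$ the condition $S_m^2(z)-S_{m-1}^2(z)=0$ holds only on a codimension-one (hence dimension-one) subset, while $V(k,l)$ has dimension two. So the dense open sets $V(k,l)\setminus\{S_{2m}(z)=0\}$ and its image in $U(k,l)$ are isomorphic via $\Phi$, giving the desired birational equivalence. (One should also note the symmetric statement on the $U(k,l)$ side: $S_{2m}(z)$ vanishes only in codimension one there too, since $z$ is an independent coordinate and the vanishing set is again two-dimensional — or simply observe that $\Phi$ maps $V(k,l)\setminus\{S_{2m}=0\}$ onto $U(k,l)\setminus\{S_{2m}=0\}$, which must therefore be dense in $U(k,l)$.)

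The only real obstacle is the bookkeeping that $\Phi$ genuinely maps $V(k,l)$ into $U(k,l)$ and not merely into the zero set of the pulled-back polynomial together with extra components — but since $t$ literally becomes the $U(k,l)$-expression under the substitution, the two defining polynomials agree as rational functions, so no spurious components appear off $\{S_{2m}(z)=0\}$. Everything else is the routine verification that $\Phi$ and $\Psi$ are inverse, which is immediate linear algebra in $x,y$ over the function field $\BC(z)$, combined with the already-proved Lemma~\ref{lemma:zeroset} to control the bad locus.
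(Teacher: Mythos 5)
Your proposal is correct and follows essentially the same route as the paper: exhibit the explicit mutually inverse rational maps coming from the substitution $u=xS_m(z)-yS_{m-1}(z)$, $v=yS_m(z)-xS_{m-1}(z)$, and invoke Lemma~\ref{lemma:zeroset} to see that the locus $S_m^2(z)-S_{m-1}^2(z)=S_{2m}(z)=0$ has codimension one on $V(k,l)$, so the maps restrict to an isomorphism of dense open subsets. Your added checks (that Proposition~\ref{proposition:tchebyshev} makes the two defining polynomials correspond under the substitution, and that the image open set is dense in $U(k,l)$) are bookkeeping the paper leaves implicit, not a different argument.
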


\begin{proof}
As discussed above, the substitution defines a rational map between $V(k,l)$ and $U(k,l)$, namely 
\[ (x,y,z) \mapsto  \Big( \frac{xS_m(z)+yS_{m-1}(z)}{S^2_m(z)-S^2_{m-1}(z)} , \frac{yS_m(z)+xS_{m-1}(z)}{S^2_m(z)-S^2_{m-1}(z)}, z \Big) \]
with   inverse
\[ (u,v,z) \mapsto \Big(uS_m(z)-vS_{m-1}(z), vS_m(z)-uS_{m-1}(z), z\Big).\]
It suffices to see that  $S^2_m(z)-S^2_{m-1}(z)=0$ only for a set of codimension  one on $V(k,l)$, which follows from Lemma~\ref{lemma:zeroset}.  
\end{proof}

We now wish to perform one more birational transformation.
\begin{definition}
Let $W(k,l)$ be the vanishing set of 
\[ S_n(t)S_{m-1}(z)-S_{n-1}(t)S_m(z) \] in $\BC^3(t,v,z)$.

For each odd integer $l$, let $W_0(l,l)$ denote the component of $W(l,l)$ given by $t=z$ and if $|l|>3$ let $W_1(l,l)$ denote the projective closure of the scheme-theoretic complement of $W_0(l,l)$ in $W(l,l).$
\end{definition}

First, we prove a lemma. 
\begin{lemma}\label{lemma:zeroset2}
On $U(k,l)$, $v=0$
 only for a set of dimension zero. 
\end{lemma}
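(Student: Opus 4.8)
The plan is to set $v=0$ in the defining equation of $U(k,l)$ and show that the resulting locus is finite. When $v=0$ the nested variable simplifies dramatically: by the formula displayed just before the definition of $U(k,l)$ we get
\[ t = -z\big(S_m^2(z)+S_{m-1}^2(z)\big)+4S_m(z)S_{m-1}(z), \]
so on $\{v=0\}$ the quantity $t$ is a fixed polynomial $g(z)$ in $z$ alone. Substituting, the locus $U(k,l)\cap\{v=0\}$ is (set-theoretically) the subset of the plane $\BC^2(u,z)$ cut out by $v=0$ and
\[ S_n\big(g(z)\big)S_{m-1}(z)-S_{n-1}\big(g(z)\big)S_m(z)=0, \]
a single polynomial equation in $z$ only. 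So it suffices to show this last equation is not identically zero; then its solution set is a finite set of values of $z$, and since $u$ is free we would only get a finite union of lines $\{v=0,\ z=z_0\}$ — wait, that is dimension one, not zero. So the argument must be sharper: I must also use that on $U(k,l)$ the coordinate $u$ is not genuinely free once $v=0$, or rather go back to $V(k,l)$.

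The cleaner route is to work on $V(k,l)\subset\BC^3(x,y,z)$ and recall from Proposition~\ref{proposition:birational1} that $v=yS_m(z)-xS_{m-1}(z)$. The condition $v=0$ together with $u=xS_m(z)-yS_{m-1}(z)=t$ (which holds on $U(k,l)$ by the birational substitution that defines $W(k,l)$, established implicitly by the setup) pins down $x$ and $y$ in terms of $z$ whenever $S_m^2(z)-S_{m-1}^2(z)\neq 0$: solving the linear system gives $x=tS_m(z)/(S_m^2(z)-S_{m-1}^2(z))$, $y=tS_{m-1}(z)/(S_m^2(z)-S_{m-1}^2(z))$, with $t=g(z)$ as above. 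So once $z$ is constrained to finitely many values, $x$ and $y$ are determined, giving a dimension-zero set — except on the bad locus $S_m^2(z)=S_{m-1}^2(z)$, which by Lemma~\ref{lemma:zeroset} meets $V(k,l)$ in codimension one, i.e.\ already contributes only finitely many points to $\{v=0\}$ when intersected with the additional constraint coming from the defining equation. Thus the whole argument reduces to the single claim: the polynomial $h(z):=S_n(g(z))S_{m-1}(z)-S_{n-1}(g(z))S_m(z)$ in $\BC[z]$ is not the zero polynomial.

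To prove $h\not\equiv 0$ I would use the trigonometric substitution $z=\zeta+\zeta^{-1}$ from Lemma~\ref{lemma:chebyshevidentities} and a degree/leading-coefficient count, or better, evaluate at a convenient specialization. For instance at $z=2$ (so $\zeta=1$) one has $S_j(2)=j+1$, hence $g(2)=-2\big((m+1)^2+m^2\big)+4(m+1)m = -2$; then $S_j(-2)=(-1)^j(j+1)$, so $h(2)=(-1)^n(n+1)\cdot m-(-1)^{n-1}n\cdot(m+1)=(-1)^n\big((n+1)m+n(m+1)\big)=(-1)^n(2mn+n+m)$, which is nonzero since $|k|,|l|>1$ forces $m,n\neq 0$ (and they have the same sign, so no cancellation). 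Therefore $h\not\equiv0$, it has finitely many roots, and combined with the determination of $x,y$ above this shows $U(k,l)\cap\{v=0\}$ is dimension zero. The main obstacle is organizing the case split around the vanishing locus of $S_m^2(z)-S_{m-1}^2(z)$ cleanly — making sure that on that exceptional locus one still lands in a finite set — but Lemma~\ref{lemma:zeroset} is tailored precisely to handle it, so this should go through without difficulty.
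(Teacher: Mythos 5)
Your first paragraph reproduces the paper's own argument, and the worry you raise there is well founded: setting $v=0$ makes $t$ equal to a polynomial $g(z)\in\BZ[z]$, so the defining equation of $U(k,l)$ collapses to $h(z):=S_n(g(z))S_{m-1}(z)-S_{n-1}(g(z))S_m(z)=0$, in which $u$ does not appear; hence for each root $z_0$ of $h$ the entire line $\{(u,0,z_0):u\in\BC\}$ lies in $U(k,l)$, and the locus $\{v=0\}\cap U(k,l)$ is a finite union of lines, of dimension one, not zero. (The paper's proof has the same defect: it concludes that ``for each of these $z$ values there is one associated $t$,'' but points of $U(k,l)\subset\BC^3(u,v,z)$ carry a $u$-coordinate, not a $t$-coordinate, and $u$ remains free.) Your attempted repair, however, rests on the identity $u=t$, which is false: by definition $t=uv-z\big(S_m^2(z)+S_{m-1}^2(z)\big)+4S_m(z)S_{m-1}(z)$, so on $\{v=0\}$ the value of $t$ is independent of $u$ and cannot determine it; equivalently, back in $V(k,l)$ the condition $v=0$ is a single linear condition on $(x,y)$, not two. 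So the ``dimension zero'' statement cannot be rescued along these lines, and indeed it fails whenever $h$ has a root (which it does, e.g., for $m,n\ge 1$, where $\deg h=2mn+m+n-1\ge 1$; the other sign cases are similar).

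Fortunately, the only use of this lemma is in Proposition~\ref{proposition:birational2}, where one divides by $v$; for that, all that is needed is that $\{v=0\}\cap U(k,l)$ is a proper, codimension-one subset of the surface $U(k,l)$, exactly as in Lemma~\ref{lemma:zeroset}. That is precisely what your first paragraph establishes once one knows $h\not\equiv 0$, and your verification of this nonvanishing is a genuine improvement over the paper, which tacitly assumes it: $g(2)=-2$, $S_j(\pm 2)=(\pm1)^j(j+1)$, so $h(2)=(-1)^n(2mn+m+n)$, and $2(2mn+m+n)+1=kl\neq 1$ because $|k|,|l|>1$. (Your parenthetical reason ``they have the same sign'' is not correct in general --- $m$ and $n$ can have opposite signs, e.g.\ for $J(3,-3)$ --- but the conclusion stands via $kl\neq1$.) The fix, then, is not to patch the $u=t$ step but to weaken the conclusion: keep your first paragraph together with the computation $h(2)\neq 0$, and state the lemma as ``$v=0$ only on a set of codimension one,'' which is all that the birational equivalence requires.
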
  

\begin{proof}  If $v=0$ then since 
$$t=uv-z\big(S^2_m(z)+S^2_{m-1}(z)\big)+4S_m(z)S_{m-1}(z)$$
we conclude that 
$$t=-z\big(S^2_m(z)+S^2_{m-1}(z)\big)+4S_m(z)S_{m-1}(z).$$

The defining polynomial for $U(k,l)$ is $S_n(t)S_{m-1}(z)-S_{n-1}(t)S_m(z)$.
Upon substituting the above polynomial in $\BZ[z]$ for $t$ we see that this defining polynomial can be expressed as a polynomial in $\BZ[z]$.  As a result, this has a finite number of roots.  For each of these $z$ values, there is one associated $t$, and hence we have a finite number of points on $U(k,l)$ where $v=0$. 
\end{proof}

Now we are prepared to show the following.

\begin{proposition}\label{proposition:birational2}
The set $U(k,l) \subset \BC^3(u,v,z)$ is birational to $W(k,l)\subset \BC^3(t,v,z)$.
\end{proposition}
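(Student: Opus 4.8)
The plan is to exhibit an explicit birational map between $U(k,l)\subset\BC^3(u,v,z)$ and $W(k,l)\subset\BC^3(t,v,z)$ by using the relation
\[
t=uv-z\big(S^2_m(z)+S^2_{m-1}(z)\big)+4S_m(z)S_{m-1}(z)
\]
to solve for $u$ in terms of $t,v,z$. First I would define the forward map $U(k,l)\dashrightarrow W(k,l)$ by $(u,v,z)\mapsto(t,v,z)$ where $t$ is the polynomial above; this is clearly a regular map, and its image lands in $W(k,l)$ since the defining polynomial $S_n(t)S_{m-1}(z)-S_{n-1}(t)S_m(z)$ is literally the same in both coordinate systems. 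For the inverse, solve the displayed relation for $u$:
\[
u=\frac{t+z\big(S^2_m(z)+S^2_{m-1}(z)\big)-4S_m(z)S_{m-1}(z)}{v},
\]
giving the rational map $(t,v,z)\mapsto(u,v,z)$ with this value of $u$. One checks directly that these two maps are mutually inverse wherever both are defined.

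Next I would verify that each map is defined on a dense open subset of its domain, which is where Lemma~\ref{lemma:zeroset2} enters. The inverse map is defined away from the locus $v=0$ in $W(k,l)$; the forward map is everywhere defined. The only thing to rule out is that the map fails to be dominant, i.e.\ that its image misses a dense subset — equivalently, that the exceptional locus $\{v=0\}$ is not all of $U(k,l)$. By Lemma~\ref{lemma:zeroset2}, $\{v=0\}$ is a set of dimension zero in $U(k,l)$, which has dimension two, so its complement is dense; hence the forward map restricts to an isomorphism between the dense open set $U(k,l)\setminus\{v=0\}$ and its image, which is a dense open subset of $W(k,l)$ (the latter because the image omits only the proper closed set $\{v=0\}\cap W(k,l)$, which has dimension at most one). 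This establishes the birational equivalence.

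The main obstacle — though it is minor — is bookkeeping: one must confirm that $W(k,l)$ genuinely has dimension two (so that removing $\{v=0\}$ leaves a dense open set rather than, say, removing a whole component), and that the image of the forward map is Zariski dense in $W(k,l)$ rather than contained in a proper subvariety. Both follow from the observation that over a generic $(t,z)$ satisfying $S_n(t)S_{m-1}(z)-S_{n-1}(t)S_m(z)=0$ with $S_{m-1}(z)\neq 0$, the value of $v$ is free (it does not appear in the defining equation of $W(k,l)$), so $W(k,l)$ fibers over the curve $\{S_n(t)S_{m-1}(z)-S_{n-1}(t)S_m(z)=0\}\subset\BC^2(t,z)$ with one-dimensional fibers, and the $v=0$ slice of this fibration is exactly the codimension-one locus excluded above — indeed Lemma~\ref{lemma:zeroset2} shows even less of it survives inside $U(k,l)$. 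Composing with Proposition~\ref{proposition:birational1} then yields that $V(k,l)$ is birational to $W(k,l)$, and since the defining polynomial of $W(k,l)$ involves no $v$, $W(k,l)$ is birational to $C(k,l)\times\BC$, as anticipated in the surrounding discussion.
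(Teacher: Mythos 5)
Your proposal is correct and follows essentially the same route as the paper: exploit the linearity of $t$ in $u$ to write down the explicit rational map and its inverse, then invoke Lemma~\ref{lemma:zeroset2} to see that the indeterminacy locus $\{v=0\}$ is negligible. (In fact your formulas are the consistent ones — the paper's displayed forward map inadvertently swaps the roles of $u$ and $t$ relative to the defining relation — and your extra remarks on dominance and the fibration of $W(k,l)$ over $C(k,l)$ only make the argument more complete.)
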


\begin{proof}

Since $t$ is linear in $u$, we define the rational  map from $\BC^3(u,v,z)$ to $\BC^3(t, v, z)$ by this replacement.  That is, define the rational map
\[ (u, v, z) \mapsto  \left( \frac{\big(u+z(S^2_m(z)+S^2_{m-1}(z)\big)-4S_m(z)S_{m-1}(z))}v, v, z\right). \]
which has rational inverse 
\[ (t,v,z) \mapsto \big(tv-z(S^2_m(z)+S^2_{m-1}(z))+4S_m(z)S_{m-1}(z), v, z\big). \]
The result now follows from Lemma~\ref{lemma:zeroset2}.
\end{proof}

\begin{definition}
Let $C(k,l)$ be the curve given by 
$$\{S_n(t)S_{m-1}(z)-S_{n-1}(t)S_m(z)=0\} \subset \BC^2 (t,z).$$
For each odd integer $l$, let $C_0(l,l)$ denote the component of $C(l,l)$ given by $t=z$ and if $|l|>3$ let $C_1(l,l)$ denote the projective closure of the scheme-theoretic complement of $C_0(l,l)$ in $C(l,l).$
\end{definition}

With this definition, the surface $W(k,l)$ is a product of the curve $C(k,l)$ and $\BC$.  We have shown that $V(k,l)$ is birational to $W(k,l)$, which is equivalent to the following, proving the first portion of Theorem~\ref{theorem:summary1}.
\begin{theorem}\label{theorem:VbirationaltoW} 
The algebraic set $X_{irr}(k,l)$ is birational to $W(k,l)$ which is, in turn,  isomorphic to $C(k,l) \times \BC$. 
\end{theorem}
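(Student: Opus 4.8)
The plan is to concatenate the three birational equivalences that have already been established and then extract the product structure of $W(k,l)$ directly from its defining polynomial.

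First, by Proposition~\ref{proposition:naturalmodel} the set $V(k,l)$ is a natural model for $X_{irr}(k,l)$, so the two are birationally equivalent. Next, Proposition~\ref{proposition:birational1} gives a birational equivalence between $V(k,l) \subset \BC^3(x,y,z)$ and $U(k,l) \subset \BC^3(u,v,z)$, and Proposition~\ref{proposition:birational2} gives one between $U(k,l)$ and $W(k,l) \subset \BC^3(t,v,z)$. Composing these, $X_{irr}(k,l)$ is birational to $W(k,l)$. Because each of these maps is built from a substitution that is linear (in the appropriate variable) or polynomial with polynomial inverse, it restricts to a bijection on the sets of irreducible components, so the statement remains meaningful in the case $k=l$ when the sets involved are reducible.

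It then remains to identify $W(k,l)$ with $C(k,l) \times \BC$. By definition $W(k,l)$ is the zero set in $\BC^3(t,v,z)$ of $F(t,z) = S_n(t)S_{m-1}(z) - S_{n-1}(t)S_m(z)$, and this polynomial does not involve $v$. Hence the coordinate projection $(t,v,z) \mapsto (t,z)$ carries $W(k,l)$ onto the curve $C(k,l)$, and the morphism $C(k,l) \times \BC \to W(k,l)$ given by $(t,z,s) \mapsto (t,s,z)$ is a two-sided inverse; thus $W(k,l) \cong C(k,l) \times \BC$, and therefore $X_{irr}(k,l)$ is birational to $C(k,l) \times \BC$.

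Since the three ingredient propositions are already in hand, the argument at this stage is purely formal — a composition of birational maps followed by reading off the trivial product structure — so there is essentially no obstacle left to clear here. All of the genuine work lay in controlling the indeterminacy loci of the two substitutions, namely the locus $S_m^2(z) - S_{m-1}^2(z) = 0$ handled by Lemma~\ref{lemma:zeroset} and the locus $v = 0$ handled by Lemma~\ref{lemma:zeroset2}, which is precisely what guarantees that the rational maps are genuinely birational. The one point I would be careful to spell out explicitly is the interpretation of "birational" for the reducible sets arising when $k=l$, as indicated above.
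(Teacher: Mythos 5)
Your proposal is correct and follows the paper's own route exactly: the authors likewise obtain the theorem by composing Proposition~\ref{proposition:naturalmodel} (identifying $X_{irr}(k,l)$ with $V(k,l)$) with Propositions~\ref{proposition:birational1} and~\ref{proposition:birational2}, and then reading off $W(k,l)\cong C(k,l)\times\BC$ from the fact that the defining polynomial is independent of $v$. Your extra care about the indeterminacy loci (Lemmas~\ref{lemma:zeroset} and~\ref{lemma:zeroset2}) and about the reducible case $k=l$ is consistent with, and slightly more explicit than, what the paper records.
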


\section{Smoothness and Irreducibility of $W(k,l)$}\label{section:smoothnessandirreducibility}

We will show that if $k\neq l$ then $W(k,l)$ is smooth and irreducible, and if $k=l$ then $W(l,l)$ has two irreducible components. Since $W(k,l)$ is the product of $C(k,l)$ and $\BC$, we will focus on the curve $C(k,l)$.   Our proof is similar to  \cite{MR2827003}, but  with small modifications. Recall that $k=2m+1$ and $l=2n+1$.
The equation $S_n(t)S_{m-1}(z)=S_{n-1}(t)S_m(z)$ can be written as 
\[ \frac{S_n(t)}{S_{n-1}(t)} = \frac{S_m(z)}{S_{m-1}(z)} \]
when $S_{n-1}(t)S_{m-1}(z)\neq 0$.
\begin{definition}
Let $h_j=S_j/S_{j-1}$, $\Delta_j=S'_jS_{j-1}-S_jS'_{j-1}$ and $H_n=S''_jS_{j-1}-S''_{j-1}S_j.$
\end{definition}
We can rewrite the defining equation for $W(k,l)$ as $h_n(t)=h_m(z)$, and  with this notation the derivative is  $h'_j=\Delta_j/ S_{j-1}^2$.

The following lemma can be verified by using Lemma \ref{lemma:chebyshevidentities}. 
\begin{lemma}
We have 

a) $(\omega^2-4)\Delta_j(\omega)=S_{2j}(\omega)-(2j+1).$

b) $(\om^2-4)^2 H_j(\omega)=(2j-2)\omega S_{2j}(\omega)-(4j+2)S_{2j-1}(\omega)+(4j+2)\om$.
\label{H}
\end{lemma}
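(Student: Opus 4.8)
\textbf{Proof proposal for Lemma \ref{H}.}
The plan is to verify both identities by converting everything into the substitution $\omega = \sigma + \sigma^{-1}$ and using the closed form $S_j(\omega) = (\sigma^{j+1}-\sigma^{-j-1})/(\sigma-\sigma^{-1})$ from Lemma~\ref{lemma:chebyshevidentities}. Since both sides of (a) and (b) are polynomials in $\omega$, it suffices to check the identities as rational functions in $\sigma$; I would treat $\sigma$ as a formal variable and compare Laurent polynomials. The key technical point is to express $S_j'(\omega)$ and $S_j''(\omega)$ in terms of $\sigma$: writing $\delta = \sigma - \sigma^{-1}$ so that $\omega = \sigma+\sigma^{-1}$, $d\omega/d\sigma = 1 - \sigma^{-2}$, one has $S_j'(\omega) = \frac{d}{d\sigma}\!\left(\frac{\sigma^{j+1}-\sigma^{-j-1}}{\delta}\right)\cdot\frac{1}{1-\sigma^{-2}}$, and a second differentiation gives $S_j''$. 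Since $1 - \sigma^{-2} = \sigma^{-1}\delta$ and $\omega^2 - 4 = \delta^2$, the factors of $\omega^2-4$ on the left-hand sides of (a) and (b) are exactly what is needed to clear the denominators introduced by differentiation, so the resulting expressions become honest Laurent polynomials in $\sigma$.

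For part (a), I would compute $\Delta_j = S_j' S_{j-1} - S_j S_{j-1}'$ directly from these formulas. Both $S_j' S_{j-1}$ and $S_j S_{j-1}'$ carry a factor $1/((1-\sigma^{-2})\delta^2)$; after multiplying by $\omega^2-4 = \delta^2$ one is left with $\frac{1}{1-\sigma^{-2}} = \frac{\sigma}{\delta}$ times a Laurent polynomial, and the cross-terms in $S_j' S_{j-1} - S_j S_{j-1}'$ should collapse so that the $\delta$ in the denominator cancels. The remaining bookkeeping is to collect powers of $\sigma$ and recognize the result as $\frac{\sigma^{2j+1}-\sigma^{-2j-1}}{\delta} - (2j+1) = S_{2j}(\omega) - (2j+1)$; the constant $-(2j+1)$ arises from the "diagonal" contribution where exponents cancel. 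Part (b) is the same idea applied to $H_j = S_j'' S_{j-1} - S_{j-1}'' S_j$, but now a second derivative means two factors of $\frac{1}{1-\sigma^{-2}}$ and hence the factor $(\omega^2-4)^2 = \delta^4$ on the left; after clearing, the answer should organize into the stated combination $(2j-2)\omega S_{2j}(\omega) - (4j+2)S_{2j-1}(\omega) + (4j+2)\omega$, where the $\omega S_{2j}$ term comes from the leading part of $S_j''$ and the linear-in-$\omega$ correction terms again come from exponent cancellations.

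The main obstacle is purely organizational rather than conceptual: keeping track of the several $\sigma$-power bookkeeping steps and the denominators $\delta$, $1-\sigma^{-2}$ without error, especially in (b) where a second derivative of the ratio $(\sigma^{j+1}-\sigma^{-j-1})/\delta$ produces a three-term expansion. A clean way to manage this is to first record the intermediate formulas
\[
(\sigma-\sigma^{-1})\,S_j'(\omega) \;=\; \frac{(j+1)(\sigma^{j+1}+\sigma^{-j-1})}{\sigma-\sigma^{-1}} \;-\; \frac{\sigma^{j+1}-\sigma^{-j-1}}{(\sigma-\sigma^{-1})^2}\cdot\frac{\sigma+\sigma^{-1}}{1}\cdot\frac{1}{\text{(appropriate factor)}},
\]
then substitute and simplify. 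Alternatively, one can avoid $\sigma$ entirely and prove (a) and (b) by induction on $j$ using the recursion $S_{j+1} = \omega S_j - S_{j-1}$ (which yields recursions for $\Delta_j$ and $H_j$) together with the base cases $j=0,1$; the recursion $\Delta_{j+1} = \omega\Delta_j - \Delta_{j-1} + (\text{lower order})$ can be checked to match the recursion satisfied by the right-hand side $\tfrac{1}{\omega^2-4}(S_{2j}-(2j+1))$. Either route is elementary; I would present the $\sigma$-substitution version as it makes the appearance of the constants $2j+1$ and $4j+2$ most transparent.
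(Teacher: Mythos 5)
Your plan — substitute $\omega=\sigma+\sigma^{-1}$, use the closed form of Lemma~\ref{lemma:chebyshevidentities}, differentiate via the chain rule, and observe that the factors $\omega^2-4=(\sigma-\sigma^{-1})^2$ exactly clear the denominators introduced by differentiation — is precisely the verification the paper has in mind (it states only that the lemma ``can be verified by using Lemma~\ref{lemma:chebyshevidentities}'' and gives no further argument). The identities do check out this way (e.g.\ for (a) the cross terms in $S_j'S_{j-1}-S_jS_{j-1}'$ collapse and product-to-sum formulas yield $\tfrac{1}{2}\bigl(S_{2j}(\omega)-(2j+1)\bigr)\cdot\tfrac{2}{\omega^2-4}\cdot(\omega^2-4)$ as claimed), so your proposal is correct and follows the paper's intended route; just note that your displayed ``intermediate formula'' is left incomplete and would need to be finished for a full write-up.
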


We will need the following lemma  (see \cite{MR2827003} Lemma 2.6) to connect smoothness and irreducibility.  
\begin{lemma}\label{lemma:smoothimpliesirreducible}
Let $C\subset \BP^1\times \BP^1$ be a smooth projective curve of bidegree $(a,b)$ with $a,b>0$. Then $C$ is irreducible and its genus is $(a-1)(b-1)$. 
\end{lemma}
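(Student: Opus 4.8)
\textbf{Proof proposal for Lemma~\ref{lemma:smoothimpliesirreducible}.}

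The plan is to treat $C$ as a divisor on the smooth projective surface $\BP^1\times\BP^1$ and apply the standard machinery of the Picard group and the adjunction formula. First I would recall that $\mathrm{Pic}(\BP^1\times\BP^1)\cong\BZ\oplus\BZ$, generated by the classes $F_1$ and $F_2$ of the two rulings, with intersection pairing $F_1^2=F_2^2=0$ and $F_1\cdot F_2=1$. Saying that $C$ has bidegree $(a,b)$ means exactly that its divisor class is $aF_2+bF_1$ (the curve meets a fiber of the first projection in $a$ points and a fiber of the second in $b$ points). From this, $C\cdot F_1=a>0$ and $C\cdot F_2=b>0$, and $C^2=2ab$.

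For irreducibility, suppose $C=C'\cup C''$ were a nontrivial decomposition into effective divisors with classes $(a',b')$ and $(a'',b'')$, both nonzero, with $a'+a''=a$, $b'+b''=b$, and $a',a'',b',b''\ge 0$. Since $C$ is smooth it is in particular reduced, so $C'$ and $C''$ share no common component; hence they meet properly and $C'\cdot C''=a'b''+a''b'\ge 0$. If this intersection number is positive, then $C'$ and $C''$ meet at a point, which would be a singular point of $C$, contradicting smoothness. So we must have $a'b''+a''b''=0$ — wait, $a'b''+a''b'=0$ — forcing (up to relabeling) $a'=b'=0$, i.e. $C'$ is trivial, a contradiction. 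Thus no such decomposition exists and $C$ is irreducible. (One should also rule out $C$ being nonreduced, e.g. $C=2C'$; but a nonreduced scheme is not smooth, so this is excluded by hypothesis as well.)

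For the genus, I would invoke the adjunction formula on the surface $\BP^1\times\BP^1$: for a smooth curve $C$ with canonical class $K$ of the surface, $2g(C)-2 = C\cdot(C+K)$. Here $K = -2F_1-2F_2$, so $C+K$ has class $(a-2)F_2+(b-2)F_1$, and
\[ C\cdot(C+K) = (aF_2+bF_1)\cdot\big((a-2)F_2+(b-2)F_1\big) = a(b-2)+b(a-2) = 2ab-2a-2b. \]
Hence $2g(C)-2 = 2ab-2a-2b$, which rearranges to $g(C)=ab-a-b+1=(a-1)(b-1)$, as claimed.

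The main obstacle is being careful about the degenerate bidegree cases at the boundary of the hypothesis: when $a=1$ or $b=1$ the genus formula gives $0$ and one should double-check the irreducibility argument still goes through (it does, since the decomposition analysis above never used $a,b\ge 2$). Also, strictly speaking the adjunction formula as stated requires $C$ irreducible (or at least connected), so the logical order matters: I would establish irreducibility first, then apply adjunction. Everything else is a routine computation in the intersection ring of $\BP^1\times\BP^1$.
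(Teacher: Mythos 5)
Your proof is correct. Note that the paper does not actually prove this lemma at all --- it only cites it as Lemma 2.6 of \cite{MR2827003} --- so there is no in-paper argument to compare against; the route you take (decomposing a hypothetical reducible $C$ in $\mathrm{Pic}(\BP^1\times\BP^1)\cong\BZ\oplus\BZ$, using that a positive intersection number of distinct components would produce a singular point, and then computing the genus by adjunction with $K=-2F_1-2F_2$) is the standard and complete one. The only blemish is cosmetic: the mid-sentence self-correction from $a'b''+a''b''=0$ to $a'b''+a''b'=0$ should be cleaned up, but the underlying logic --- that $a'b''+a''b'=0$ with all terms nonnegative forces one of the two classes to be trivial --- is sound, and you correctly establish irreducibility before invoking adjunction.
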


The proof of smoothness  will follow from comparing valuations at potential critical points.  We begin with a few lemmas. 
In the case that $mn<0$ we use the following lemma.

\begin{lemma}
Let $\omega \in \BC$ be a root of $\Delta_n$. If $n>0$ then $|h_n(\omega)|>1$, and if $n<0$ then $|h_n(\omega)|<1$.
\label{mn<0}
\end{lemma}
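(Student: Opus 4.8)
The plan is to analyze the rational function $h_n = S_n/S_{n-1}$ near a root $\omega$ of $\Delta_n = S_n'S_{n-1} - S_nS_{n-1}'$, exploiting the closed form of Lemma~\ref{lemma:chebyshevidentities}. First I would write $\omega = \sigma + \sigma^{-1}$, so that $S_j(\omega) = (\sigma^{j+1}-\sigma^{-j-1})/(\sigma-\sigma^{-1})$ and hence $h_n(\omega) = (\sigma^{n+1}-\sigma^{-n-1})/(\sigma^n - \sigma^{-n})$. The condition $\Delta_n(\omega)=0$ can be rewritten, via Lemma~\ref{H}(a), as $S_{2n}(\omega) = 2n+1$ (assuming $\omega\neq\pm2$, a case to check separately), which in the $\sigma$-coordinate reads $(\sigma^{2n+1}-\sigma^{-2n-1})/(\sigma-\sigma^{-1}) = 2n+1$. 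The key observation is that $|h_n(\omega)|^2 = h_n(\omega)\overline{h_n(\omega)}$, and a root of $\Delta_n$ is exactly a point where the derivative $h_n' = \Delta_n/S_{n-1}^2$ vanishes, i.e. a critical point of $h_n$; but more usefully, $\Delta_n(\omega)=0$ forces a multiplicative relation between $\sigma^{n}$ and $\sigma^{n+1}$ that pins down $|h_n(\omega)|$.

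Concretely, from $\Delta_n = 0$ one gets (using the product/quotient structure of the $S_j$ in $\sigma$) that $\sigma^{2n+1}$ satisfies a real algebraic constraint; the cleanest route is to note that $h_n(\omega) = p/q$ where $p = \sigma^{n+1}-\sigma^{-n-1}$, $q = \sigma^n-\sigma^{-n}$, and to compute $|p|^2 - |q|^2$. Expanding, $|p|^2 - |q|^2 = (\sigma^{n+1}-\sigma^{-n-1})(\bar\sigma^{n+1}-\bar\sigma^{-n-1}) - (\sigma^n-\sigma^{-n})(\bar\sigma^n-\bar\sigma^{-n})$; after writing $\sigma = re^{i\theta}$ this becomes an expression in $r$ and $\theta$ whose sign I claim is governed by whether $n>0$ or $n<0$ once the constraint $\Delta_n(\omega)=0$ is imposed. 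I expect $\Delta_n(\omega)=0$ to force $|\sigma|=1$ is \emph{not} generally true, so the real content is that the constraint, combined with the degree/sign information in Lemma~\ref{H}(a) (the term $-(2n+1)$ flips sign with $n$), yields $|h_n(\omega)| > 1$ when $n>0$ and $< 1$ when $n<0$. An alternative, possibly cleaner, approach: observe $h_n = h_{-n}^{-1}$ is false but $S_{-j} = -S_{j-2}$ gives a precise symmetry relating the $n<0$ case to the $n>0$ case with indices shifted, so the $n<0$ statement should follow formally from the $n>0$ one by this substitution, reducing the work to a single sign.

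The main obstacle I anticipate is handling the case $n > 0$ itself: showing $|h_n(\omega)| > 1$ at every root $\omega$ of $\Delta_n$ in $\BC$, not merely the real ones. For real $\omega \in (-2,2)$ one has $\sigma = e^{i\theta}$ and everything reduces to a statement about $\sin((n+1)\theta)/\sin(n\theta)$ at the critical points of this ratio, which can be dispatched by elementary trigonometry (the critical points interlace the zeros of $\sin(n\theta)$, forcing the ratio away from the unit circle in the right direction). For $\omega$ real with $|\omega|>2$ or $\omega$ genuinely complex, I would argue that $\Delta_n$ has no such roots, or handle them by the same $\sigma$-coordinate computation; establishing that all roots of $\Delta_n$ are real and in $(-2,2)$ (which follows from $\Delta_n$ being, up to the factor $\omega^2-4$, $S_{2n}(\omega) - (2n+1)$ and a standard interlacing argument for Chebyshev-type polynomials) is the step most likely to require care, and I would do it first so the rest is a bounded real-variable estimate.
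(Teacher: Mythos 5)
Your setup is the same as the paper's: substitute $\omega=\sigma+\sigma^{-1}$, rewrite $\Delta_n(\omega)=0$ as $S_{2n}(\omega)=2n+1$ via Lemma~\ref{H}(a), and compare $|p|^2$ with $|q|^2$ for $p=\sigma^{n+1}-\sigma^{-(n+1)}$, $q=\sigma^{n}-\sigma^{-n}$; and your observation that $S_{-j}=-S_{j-2}$ formally reduces $n<0$ to $n>0$ is sound (the paper instead just redoes the argument, noting the relevant signs flip). But as written the proposal has a genuine gap, in two respects. First, the step you single out as the one to do first --- showing all roots of $\Delta_n$ are real and lie in $(-2,2)$ --- is false. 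From $(\omega^2-4)\Delta_n(\omega)=S_{2n}(\omega)-(2n+1)$ and $S_{2n}(2\cos\theta)=\sin((2n+1)\theta)/\sin\theta$ one gets $S_{2n}(\omega)<2n+1$ strictly on $(-2,2)$, while $S_{2n}(\omega)>2n+1$ for real $|\omega|>2$; so $\Delta_n$ has \emph{no} real roots whatsoever, and every one of its $2n-2$ roots is genuinely complex. The ``bounded real-variable estimate'' you plan to reduce to therefore covers the empty set, and the complex case you hope to ``argue does not occur'' is the entire content of the lemma.

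Second, for the complex case you assert the sign of $|p|^2-|q|^2$ but do not establish it, and this is precisely where the work lies. The paper supplies two ingredients you are missing. (i) From $\sigma^{2n+1}-\sigma^{-(2n+1)}=(2n+1)(\sigma-\sigma^{-1})$ with $2n+1>0$, the imaginary parts of $\sigma^{2n+1}$ and $\sigma$ have the same sign, whence $(\sigma^{2n+1}-\overline{\sigma}^{2n+1})(\sigma-\overline{\sigma})\le 0$ (a product of purely imaginary numbers in the same half-plane). (ii) The algebraic identity, with $\alpha=|\sigma|^2$,
\begin{equation*}
|p|^2-|q|^2=\frac{(\alpha-1)(\alpha^{2n+1}-1)-(\sigma^{2n+1}-\overline{\sigma}^{2n+1})(\sigma-\overline{\sigma})}{\alpha^{n+1}},
\end{equation*}
in which, for $n>0$, both terms in the numerator are nonnegative: the first because $\alpha>0$ and $2n+1>0$, the second by (i). Equality forces $\alpha=1$ and $\sigma^{2n+1}=\pm1$, hence $\sigma=\pm1$ and $\omega=\pm2$, where one checks $|h_n(\pm2)|=(n+1)/n>1$ directly. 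Without (i) and (ii) your outline does not close; with them it becomes the paper's proof.
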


\begin{proof}
Suppose that $\Delta_n(\omega)=0$. By Lemma \ref{H}, $S_{2n}(\omega)=2n+1$. We have $S_{n-1}(\omega) \not=0$ (otherwise $S_n(\omega)S'_{n-1}(\omega)=S'_n(\omega)S_{n-1}(\omega)-\Delta_n(\omega)=0$ which cannot occur, since $S_{n-1}$ is separable and relatively prime to $S_n$ in $\BC[\omega]$). Hence $h_n(\omega)=S_n(\omega)/S_{n-1}(\omega)$ is well-defined. Write $\omega = \sigma + \sigma^{-1}$. We have $S_{2n}(\omega)=2n+1$, i.e. $\sigma^{2n+1}-\sigma^{-(2n+1)}=(2n+1)(\sigma-\sigma^{-1})$. Assume $n>0$. Then $\sigma^{2n+1}-\sigma^{-(2n+1)}$ and $\sigma-\overline{\sigma}$ are in the same half-planes. It follows that $\sigma^{2n+1}-\overline{\sigma}^{2n+1}$ and $\sigma-\overline{\sigma}$ are in the same half-plane. Since both these values are purely imaginary, we conclude $(\sigma^{2n+1}-\overline{\sigma}^{2n+1})(\sigma-\overline{\sigma})\le 0$, with equality if and only if $\sigma^{2n+1}$ is real. 

Let $\alpha=\sigma \overline{\sigma}= |\sigma|^2>0$. We have
\begin{eqnarray*}
|\sigma^{n+1}  & - &  \sigma^{-(n+1)}|^2-|\sigma^n-\sigma^{-n}|^2\\
&  = &  (\sigma^{n+1}-\sigma^{-(n+1)})(\overline{\sigma}^{n+1}-\overline{\sigma}^{-(n+1)})-(\sigma^n-\sigma^{-n})(\overline{\sigma}^n-\overline{\sigma}^{-n}) \\
&=& (\alpha^{n+1}+\alpha^{-(n+1)}-(\alpha^{n}+\alpha^{-n}))\\
&& -(\sigma^{2n+1}-\overline{\sigma}^{2n+1})(\sigma-\overline{\sigma})/\sigma^{n+1}\overline{\sigma}^{n+1}\\
&=& (\alpha-1)(\alpha^{2n+1}-1)/\alpha^{n+1}-(\sigma^{2n+1}-\overline{\sigma}^{2n+1})(\sigma-\overline{\sigma})/\alpha^{n+1} \\
&\ge &0.
\end{eqnarray*} 
Equality holds if and only if $|\sigma|^2=\alpha=1$ and $\sigma^{2n+1}$ is real, so if and only if $\sigma^{2n+1}=\pm 1$. If $\sigma^{2n+1}=\pm 1$, then from $\sigma^{2n+1}-\sigma^{-(2n+1)}=(2n+1)(\sigma-\sigma^{-1})$, we find $\sigma = \sigma^{-1}$, so $\sigma = \pm 1$ and $\omega = \pm 2$. If $\omega = \pm 2$ then $|h_n(\omega)|=|S_n(\omega)/S_{n-1}(\omega)|=(n+1)/n>1$. 

The proof for $n<0$ is similar. In that case $\sigma^{2n+1}-\overline{\sigma}^{2n+1}$ and $\sigma-\overline{\sigma}$ are in opposite half-planes and ($\alpha-1)(\alpha^{2n+1}-1) \le 0$.
\end{proof}

In the remaining case ($mn>0$) we can use non-archimedian places instead of complex absolute values. For any root $\omega$ of $\Delta_n$, we have $S_{2n}(\omega)=2n+1$. It follows that 
$$h^2_n(\omega)-1=\left(\frac{S_n(\om)}{S_{n-1}(\om)}\right)^2-1=\frac{S_{2n}(\omega)}{S^2_{n-1}(\om)}=\frac{2n+1}{S^2_{n-1}(\om)}.$$

\begin{lemma}
For any field $\BF$ with characteristic not dividing $2n$, the polynomial $S_{n-1}$ is separable over $\BF$ and we have $(\Delta_n,S_{n-1})=(1)$ in $\BF[\om]$.
\label{main}
\end{lemma}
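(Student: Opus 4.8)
The plan is to prove both claims together using the Chebyshev identities already assembled. For separability of $S_{n-1}$ over $\BF$: writing $S_{n-1}(\omega) = (\sigma^n - \sigma^{-n})/(\sigma - \sigma^{-1})$ after the substitution $\omega = \sigma + \sigma^{-1}$, the roots of $S_{n-1}$ correspond to the primitive-and-non-primitive $2n$-th roots of unity $\sigma$ with $\sigma^2 \neq 1$, paired as $\sigma \leftrightarrow \sigma^{-1}$. A clean algebraic way to see separability without leaving $\BF$ is to exhibit a polynomial combination $A(\omega) S_{n-1}(\omega) + B(\omega) S_{n-1}'(\omega) = c$ for a nonzero constant $c \in \BF$; such an identity follows from Lemma~\ref{H}(a) applied with index $n-1$, namely $(\omega^2-4)\Delta_{n-1}(\omega) = S_{2n-2}(\omega) - (2n-1)$, combined with Lemma~\ref{identities}(b) which gives $S_{2n-2}(\omega) = S_{n-1}^2(\omega) - S_{n-2}^2(\omega)$, together with the recurrence $S_{n-2} = \omega S_{n-1} - S_n$. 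Chasing these shows that any common root of $S_{n-1}$ and $S_{n-1}'$ would force $2n-1 = 0$ or $\omega^2 - 4 = 0$ with $S_{n-1}(\pm 2) = \pm n \neq 0$ in $\BF$, a contradiction under the hypothesis on the characteristic.

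For the coprimality $(\Delta_n, S_{n-1}) = (1)$: suppose $\omega$ is a common root in $\overline{\BF}$. As noted in the text just before the lemma, $\Delta_n(\omega) = 0$ forces $S_{2n}(\omega) = 2n+1$ via Lemma~\ref{H}(a). But $S_{n-1}(\omega) = 0$ together with $S_{2n}(\omega) = S_n^2(\omega) - S_{n-1}^2(\omega)$ (Lemma~\ref{identities}(b)) gives $S_n^2(\omega) = 2n+1$, while $S_{n-1}(\omega) = 0$ combined with $\Delta_n(\omega) = S_n'(\omega)S_{n-1}(\omega) - S_n(\omega)S_{n-1}'(\omega) = 0$ forces $S_n(\omega)S_{n-1}'(\omega) = 0$; since $S_n(\omega)^2 = 2n+1 \neq 0$ we get $S_{n-1}'(\omega) = 0$, so $\omega$ is a common root of $S_{n-1}$ and $S_{n-1}'$, contradicting the separability just established. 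Hence no common root exists, and since $S_{n-1}$ and $\Delta_n$ are polynomials over a field, this means they generate the unit ideal in $\BF[\omega]$.

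I would present these two parts as a single argument: first extract the Bézout-type identity witnessing separability of $S_{n-1}$ (this handles the separability claim and is reused), then run the short contradiction above for coprimality. The only subtlety worth flagging is the characteristic hypothesis: the phrase ``characteristic not dividing $2n$'' is exactly what is needed to keep $2n+1$, $2n-1$, and the values $S_{n-1}(\pm 2) = \pm n$, $S_n(\pm 2) = \pm(n+1)$ nonzero in $\BF$, and to ensure that the substitution $\omega = \sigma + \sigma^{-1}$ and the passage to $2n$-th roots of unity behave as in characteristic zero. I expect the main obstacle to be purely bookkeeping: organizing the Chebyshev identities so that the separability witness is genuinely a constant and tracking which of the low-degree exceptional roots ($\omega = \pm 2$) can occur, rather than any conceptual difficulty.
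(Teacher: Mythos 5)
There is a genuine gap in your coprimality argument, and it occurs exactly in the case the lemma is used for. You write ``since $S_n(\omega)^2 = 2n+1 \neq 0$'', but the hypothesis that the characteristic does not divide $2n$ does \emph{not} force $2n+1$ to be nonzero in $\BF$ (take characteristic $3$ and $n=1$, say). Worse, the only place the paper invokes this lemma is Lemma~\ref{eval}, where $\BF$ is the residue field at a prime above $p$ with $p \mid 2n+1$ — so there $2n+1 = 0$, your deduction gives $S_n(\omega)=0$ rather than $S_n(\omega)\neq 0$, and the argument collapses. The same misconception appears in your closing remark that the hypothesis keeps $2n+1$ and $2n-1$ nonzero; it does not, and fortunately nothing in a correct proof needs it to. The fix is short: $S_n$ and $S_{n-1}$ can never vanish simultaneously, because Lemma~\ref{identities}(a) gives $S_n^2(\omega)+S_{n-1}^2(\omega)-\omega S_n(\omega)S_{n-1}(\omega)=1$. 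Once you know $S_n(\omega)\neq 0$ at any root $\omega$ of $S_{n-1}$, your chain $\Delta_n(\omega)=-S_n(\omega)S_{n-1}'(\omega)=0 \Rightarrow S_{n-1}'(\omega)=0$ contradicts separability, which is precisely the paper's one-line argument $(\Delta_n,S_{n-1})=(S_nS_{n-1}',S_{n-1})=(1)$. The detour through $S_{2n}(\omega)=2n+1$ should simply be deleted.

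Your separability argument, by contrast, is a legitimately different route from the paper's. The paper observes that $(\sigma^{n+1}-\sigma^{n-1})S_{n-1}(\sigma+\sigma^{-1})=\sigma^{2n}-1$ and lets $S_{n-1}$ inherit separability from $\sigma^{2n}-1$, which is clean and avoids all constant-chasing. Your Bézout-identity approach also works, but the bookkeeping comes out differently than you predict: at a common root $\omega_0$ of $S_{n-1}$ and $S_{n-1}'$ one gets $\Delta_{n-1}(\omega_0)=0$, hence $S_{2n-2}(\omega_0)=2n-1$ from Lemma~\ref{H}(a), while $S_{2n-2}(\omega_0)=-S_{n-2}^2(\omega_0)=-1$ by Lemma~\ref{identities}(b) and (a); the contradiction is $2n=0$, exactly matching the hypothesis, and the witnessed constant in the identity $AS_{n-1}+BS_{n-1}'=c$ is $c=-2n$. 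No case split on $\omega_0=\pm 2$ is needed, since the identity of Lemma~\ref{H}(a) holds at those points too (and $S_{n-1}(\pm2)=\pm n\neq 0$ anyway). What your route buys is an explicit certificate of coprimality of $S_{n-1}$ and $S_{n-1}'$ entirely inside $\BF[\omega]$; what the paper's route buys is brevity.
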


\begin{proof}
We have $(\sigma^{n+1}-\sigma^{n-1})S_{n-1}=\si^{2n}-1$ and the reduction of this polynomial to $\BF$ is separable. It follows that $S_{n-1}$ is separable over $\BF$, i.e. $(S_{n-1}, S'_{n-1})=(1)$. Since $\Delta_n=S'_nS_{n-1}-S_nS'_{n-1}$, we have $(\Delta_n,S_{n-1})=(S_nS'_{n-1},S_{n-1})=(1).$ 
\end{proof}

\begin{lemma}
Let $p$ be a prime dividing $2n+1$. Let K be a number field containing a root $\om$ of $\Delta_n$. Let $v$ be a valuation on $K$ with $v(p)=1$. Then $v(S_{n-1}(\om))=0$.
\label{eval}
\end{lemma}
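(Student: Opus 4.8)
The plan is to show directly that $S_{n-1}(\omega)$ is a $v$-adic unit, by pitting the ``norm form'' identity of Lemma~\ref{identities}(a) against the divisibility $p\mid 2n+1$ through the ultrametric inequality; the numerical identities already recorded in Lemmas~\ref{identities} and~\ref{H} do essentially all the work.

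First I would note that $\omega$ is an algebraic integer. Indeed $\Delta_n=S_n'S_{n-1}-S_nS_{n-1}'$ has integer coefficients, and comparing top terms shows it is monic of degree $2n-2$ when $n\ge 2$ (the leading coefficient is $n-(n-1)=1$); the remaining values of $n$ are vacuous or reduce to this one. Hence $\omega\in\CO_K$, so $S_n(\omega)$ and $S_{n-1}(\omega)$ lie in $\CO_K$ and both have $v$-value $\ge 0$.

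Next I would assemble the two identities I need. Substituting the root $\omega$ into the polynomial identity $(\omega^2-4)\Delta_n(\omega)=S_{2n}(\omega)-(2n+1)$ of Lemma~\ref{H}(a) and using $\Delta_n(\omega)=0$ gives $S_{2n}(\omega)=2n+1$ with no case split on $\omega=\pm 2$, and then Lemma~\ref{identities}(b) rewrites this as
\[ S_n(\omega)^2-S_{n-1}(\omega)^2=2n+1. \]
Evaluating Lemma~\ref{identities}(a) at $\omega$ gives $S_n(\omega)(S_n(\omega)-\omega S_{n-1}(\omega))+S_{n-1}(\omega)^2=1$, so $1$ lies in the ideal of $\CO_K$ generated by $S_n(\omega)$ and $S_{n-1}(\omega)$; in particular $\min\{v(S_n(\omega)),v(S_{n-1}(\omega))\}=0$, and $S_{n-1}(\omega)\ne 0$ (otherwise the two displayed identities would force $2n+1=1$).

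Finally I would run the ultrametric argument. Since $p\mid 2n+1$ and $v(p)=1$, we have $v(2n+1)\ge 1>0$, hence $v(S_n(\omega)^2-S_{n-1}(\omega)^2)>0$. If $v(S_n(\omega))\ne v(S_{n-1}(\omega))$ then $v(S_n(\omega)^2)\ne v(S_{n-1}(\omega)^2)$, so the ultrametric equality gives $v(S_n(\omega)^2-S_{n-1}(\omega)^2)=2\min\{v(S_n(\omega)),v(S_{n-1}(\omega))\}=0$, a contradiction. Therefore $v(S_n(\omega))=v(S_{n-1}(\omega))$, and as their minimum is $0$ each of them is $0$; in particular $v(S_{n-1}(\omega))=0$. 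There is no genuine obstacle here: the only points that call for a little care are checking that $\Delta_n$ is monic (so $\omega$ is integral) and observing that Lemma~\ref{H}(a) delivers $S_{2n}(\omega)=2n+1$ for \emph{every} root of $\Delta_n$; everything else is a single application of the ultrametric inequality.
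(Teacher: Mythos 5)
Your proof is correct, but it takes a genuinely different route from the paper's. The paper argues residually: $\om$ is an algebraic integer because $\Delta_n$ is monic, one reduces modulo the prime $\fp$ attached to $v$, and since $p\mid 2n+1$ the residue characteristic does not divide $2n$, so Lemma~\ref{main} gives $(\Delta_n,S_{n-1})=(1)$ in $\BF_{\fp}[\om]$; hence the reduction of $\om$, being a root of $\Delta_n$ mod $\fp$, cannot also be a root of $S_{n-1}$ mod $\fp$, i.e. $v(S_{n-1}(\om))=0$. You instead stay in $\CO_K$ and play two global identities off each other: Lemma~\ref{identities}(a) exhibits $1$ as an $\CO_K$-combination of $S_n(\om)$ and $S_{n-1}(\om)$, forcing $\min\{v(S_n(\om)),v(S_{n-1}(\om))\}=0$, while Lemma~\ref{H}(a) combined with Lemma~\ref{identities}(b) gives $S_n(\om)^2-S_{n-1}(\om)^2=2n+1$, whose valuation is positive because $p\mid 2n+1$; the ultrametric inequality then forces $v(S_n(\om))=v(S_{n-1}(\om))$, so both are zero. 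Your version bypasses Lemma~\ref{main} (separability and coprimality in residue characteristic $p$) entirely, replacing it by the characteristic-zero coprimality of $S_n$ and $S_{n-1}$ in $\BZ[\om]$, which is arguably more self-contained; the paper's version isolates a reusable mod-$p$ statement and is the natural companion to the identity $h_n^2(\om)-1=(2n+1)/S_{n-1}^2(\om)$ that drives the smoothness proof. Your side remarks --- that $\Delta_n$ is monic up to the vacuous or degenerate values of $n$, and that $S_{2n}(\om)=2n+1$ at every root of $\Delta_n$ without a case split at $\om=\pm2$ --- are accurate and at the same level of detail as the paper itself.
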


\begin{proof}
The polynomial $\Delta_n$ is monic, so $\om$ is an algebraic integer. Let $\fp$ be the prime associated with $v$, and $\BF_{\fp}$ be its residue field. Then the characteristic $p$ of $\BF_{\fp}$ does not divide $2n$, so by Lemma \ref{main} the reduction of $S_{n-1}(\om)$ to $\BF_{\fp}$ is not $0$. This implies $v(S_{n-1}(\om))=0$.
\end{proof}

We now address smoothness.  


\begin{proposition}\label{proposition:Wsmooth1}
Let $k$ and $l$ be any odd integers with $k\neq l$.  Then $C(k,l)$ is smooth over $\BQ$.
\end{proposition}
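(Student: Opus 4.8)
The curve $C(k,l)$ is defined in $\BC^2(t,z)$ by $F(t,z) = S_n(t)S_{m-1}(z) - S_{n-1}(t)S_m(z) = 0$. To prove smoothness over $\BQ$, the plan is to show that $F$, $\partial F/\partial t$, $\partial F/\partial z$ have no common zero. Using the notation $h_j = S_j/S_{j-1}$ with $h_j' = \Delta_j/S_{j-1}^2$, at a point where $S_{n-1}(t)S_{m-1}(z) \ne 0$ the curve is locally $h_n(t) = h_m(z)$, and a singular point would force $\Delta_n(t) = 0$ and $\Delta_m(z) = 0$ simultaneously (from $\partial F/\partial t = \partial F/\partial z = 0$), together with $h_n(t) = h_m(z)$. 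So the first step is to reduce the singularity question, via the product rule, to: there is no pair $(t,z)$ with $\Delta_n(t) = 0$, $\Delta_m(z) = 0$, and $h_n(t) = h_m(z)$ — plus a separate check of the locus where $S_{n-1}$ or $S_{m-1}$ vanishes (where one should instead use the homogeneous/projective form and the relative primality of $S_j$ and $S_{j-1}$ from Lemma~\ref{main}, noting $S_j, S_{j-1}$ share no root).

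**Using the valuation machinery.** The heart of the argument is to rule out $h_n(t) = h_m(z)$ when $\Delta_n(t) = \Delta_m(z) = 0$, and here one splits into the two cases flagged by the lemmas. If $mn < 0$, say $n > 0 > m$: by Lemma~\ref{mn<0}, $|h_n(t)| > 1$ while $|h_m(z)| < 1$ (applying the lemma with the negative index), so equality is impossible over $\BC$, hence a fortiori over $\BQ$. If $mn > 0$, use the non-archimedean approach: from $\Delta_n(t) = 0$ we get $S_{2n}(t) = 2n+1$, hence $h_n(t)^2 - 1 = (2n+1)/S_{n-1}(t)^2$, and similarly $h_m(z)^2 - 1 = (2m+1)/S_{m-1}(z)^2$. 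Pick a prime $p \mid 2n+1$. Since $k \ne l$ we have $n \ne m$; the key number-theoretic point to extract is that $p$ can be chosen so that it does \emph{not} also divide $2m+1$ — this needs $\gcd(2n+1, 2m+1) \ne 2n+1$, i.e. $2n+1 \nmid 2m+1$, which one arranges by symmetry (WLOG $|n| \ge |m|$, and if $|n| = |m|$ then $n = -m$ lands in the $mn<0$ case, so $|n| > |m|$ and $2|n|+1 > 2|m|+1 \ge 1$ forces $2n+1 \nmid 2m+1$ unless $2m+1 = 0$, which is excluded). Then by Lemma~\ref{eval}, for a valuation $v$ with $v(p) = 1$ we get $v(S_{n-1}(t)) = 0$, so $v(h_n(t)^2 - 1) = v(2n+1) \ge 1 > 0$; but $v(h_m(z)^2 - 1) = v(2m+1) - 2v(S_{m-1}(z)) \le 0$ since $v(2m+1) = 0$ (as $p \nmid 2m+1$) and $v(S_{m-1}(z)) \ge 0$ ($z$ being an algebraic integer, $S_{m-1}(z)$ is too). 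This contradicts $h_n(t) = h_m(z)$.

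**Boundary and exceptional loci.** Before concluding I would dispose of the degenerate sub-cases. First, the case $n = 0$ or $m = 0$: if, say, $m = 0$ then $S_{m-1} = S_{-1} = 0$ and $S_m = S_0 = 1$, so $F = -S_{n-1}(t)$, whose vanishing set is $\lfloor |k|/2 \rfloor$ disjoint vertical lines — smooth; similarly for $n = 0$. (These correspond to $|k| = 1$ or $|l| = 1$, formally outside the hypothesis $|k|,|l|>1$ of later theorems but harmless here, or one simply adds $|k|,|l|>1$.) Second, on the locus $S_{n-1}(t) = 0$: there $F = S_n(t)S_{m-1}(z)$; since $S_n$ and $S_{n-1}$ are coprime, $S_n(t) \ne 0$, so $F = 0$ forces $S_{m-1}(z) = 0$, and at such a point $\partial F/\partial t = S_n'(t)S_{m-1}(z) - \cdots$; one checks the gradient is nonzero using $S_{n-1}(t) = 0$, $S_n(t) \ne 0$, $S_{m-1}(z) = 0$, $S_m(z) \ne 0$ (so $\partial F/\partial t = S_n'(t)\cdot 0 - S_{n-1}'(t)S_m(z)$ and $S_{n-1}'(t) \ne 0$ by separability of $S_{n-1}$). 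The analogous statement handles $S_{m-1}(z) = 0$.

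**Main obstacle.** The routine product-rule reductions and the archimedean case are straightforward; the real content, and the step I expect to require the most care, is the $mn > 0$ valuation argument — specifically pinning down that one may choose the prime $p \mid 2n+1$ with $p \nmid 2m+1$ when $k \ne l$, and correctly tracking that all the quantities $S_{n-1}(t)$, $S_{m-1}(z)$ are $v$-integral (which needs $t, z$ algebraic integers, i.e. $\Delta_n, \Delta_m$ effectively monic after clearing the leading coefficient — one should check the normalization, since $\Delta_j$ as defined may not be monic and Lemma~\ref{eval} is stated for roots of $\Delta_n$ assumed monic). Getting these normalizations and the coprimality of $2n+1$ and $2m+1$ exactly right is where the proof can go wrong.
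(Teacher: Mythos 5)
Your overall strategy coincides with the paper's: reduce singularity of the affine part to the simultaneous conditions $\Delta_n(t_0)=\Delta_m(z_0)=0$ and $h_n(t_0)=h_m(z_0)$ (after disposing of the locus $S_{n-1}(t)S_{m-1}(z)=0$ by separability, exactly as the paper does), handle $mn<0$ with the archimedean estimate of Lemma~\ref{mn<0}, and handle $mn>0$ with the valuation identity $h_j^2-1=(2j+1)/S_{j-1}^2$ together with Lemma~\ref{eval}. Your worry about monicity is unfounded ($\Delta_n$ is monic of degree $2n-2$ for $n>0$), and your treatment of the exceptional loci is fine.

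However, there is a genuine gap in your $mn>0$ case: you claim one can choose a prime $p$ dividing $2n+1$ but not $2m+1$, and you justify this by arguing $2n+1\nmid 2m+1$. That implication is false, and so is the claim itself. For example, take $k=3$, $l=9$ (so $m=1$, $n=4$): the only prime dividing $2n+1=9$ is $3$, which also divides $2m+1=3$; the non-divisibility $9\nmid 3$ does not produce a prime factor of $9$ that misses $3$. With your choice your conclusion $v(2m+1)=0$ fails and the contradiction evaporates. The paper's proof avoids this by choosing $p$ with $v_p(2n+1)\neq v_p(2m+1)$ --- such a prime exists because $kl>0$ and $k\neq l$ force $|2n+1|\neq |2m+1|$ --- and then, assuming WLOG $v_p(2n+1)>v_p(2m+1)$, applying Lemma~\ref{eval} only on the $t_0$ side to get $v\bigl((2n+1)/S_{n-1}^2(t_0)\bigr)=v(2n+1)$, while on the $z_0$ side using only integrality of $S_{m-1}(z_0)$ to get $v\bigl((2m+1)/S_{m-1}^2(z_0)\bigr)\le v(2m+1)$. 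Your argument needs to be repaired along these lines (comparing valuations rather than insisting on strict non-divisibility) before it covers all pairs with $mn>0$.
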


\begin{proof}

Suppose $P=(t_0,z_0)$ is a singular point on the affine part of $C(k,l)$. Then $S_{n-1}(t_0) \not= 0$ and $S_{m-1}(z_0) \not= 0$. (If $S_{n-1}(t_0)= 0$ then $S_{m-1}(z_0) = 0$. Since $P$ is a singular point, we also have $S'_{n-1}(t_0)= 0$ and $S'_{m-1}(z_0) = 0$. This is impossible since $S_j$ is separable.) Then $C(k,l)$ can be given around $P$ by $h_n(t)=h_m(z)$. The fact that $P$ is a singular point is then equivalent to the fact that $t_0$ and $z_0$ are critical points for $h_n$ and $h_m$ respectively. (We have $\Delta_n(t_0)=\Delta_m(z_0)=0$, i.e. $h'_n(t_0)=h'_m(z_0)=0$.)


First, consider the case when $kl<0$. The points at infinity are smooth by \cite[Lemma 5.6]{MR2827003}. The proposition follows from Lemma~\ref{mn<0}. That is, the values of $h_k$ at its critical points are all different from each other, and they are also different from the values of $h_l$ at all its critical points when $k \not= l$. 

Now, assume that $kl>0$ but $k\neq l$.
Assume $P(t_0,z_0)$ is a singular point over $\overline{\BQ}$ of the standard affine part of $C(k,l)$. Let $K$ be the number field $\BQ(t_0,z_0)$. We have $\Delta_n(t_0)=\Delta_m(z_0)=0$ and $C(k,l)$ is given around $P$ by $h_n(t_0)=h_m(z_0)$. It follows that $h^2_n(t_0)-1=h^2_m(z_0)-1$, i.e.
\begin{equation}\tag{$*$} \frac{2n+1}{S^2_{n-1}(t_0)}=\frac{2m+1}{S^2_{m-1}(z_0)}.\end{equation}
Let $p$ be any prime such that $v_p(2n+1) \not= v_p(2m+1)$. By symmetry we may assume $v_p(2n+1)>v_p(2m+1)$. Let $\fp$ be any prime of $K$ above $p$, and let $v$ be the valuation on $K$ associated to $p$, normalized so that $v$ restricts to $v_p$ on $\BQ$. By Lemma \ref{eval}, we have
$$v\Big(\frac{2n+1}{S^2_{n-1}(t_0)}\Big)=v(2n+1)>v(2m+1) \ge v\Big(\frac{2m+1}{S^2_{m-1}(z_0)}\Big).$$
This contradicts the equality $(*)$, and we conclude that no singular point $P$ exists on the affine part. By \cite[Lemma 5.6]{MR2827003} there are no singular points at infinity. 
\end{proof}


\begin{proposition}\label{proposition:Wsmooth2}
Let $l$ be any odd integer. Then the curve $C_1(l,l)$ is smooth over $\BQ$.
\end{proposition}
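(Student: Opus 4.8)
The plan is to first replace $C_1(l,l)$ by an explicit equation, then check regularity separately at affine points off the diagonal $\{t=z\}$, at affine points on the diagonal, and at the points at infinity of $\BP^1(t)\times\BP^1(z)$. Since $C_1(l,l)$ is only defined when $|l|>3$, and since $S_{-j}=-S_{j-2}$ gives $C(l,l)=C(|l|,|l|)$, I may assume $l=2n+1$ with $n\ge 2$. With $k=l$ we have $m=n$, so $C(l,l)$ is cut out by $F(t,z)=S_n(t)S_{n-1}(z)-S_{n-1}(t)S_n(z)$. The Christoffel--Darboux identity for the recurrence $S_{j+1}=\omega S_j-S_{j-1}$ (a straightforward induction, in the spirit of Lemmas~\ref{lemma:chebyshevidentities}--\ref{identities}) gives $F(t,z)=(t-z)\,G(t,z)$ with $G(t,z)=\sum_{i=0}^{n-1}S_i(t)S_i(z)$; moreover $G(t,t)=\sum_{i=0}^{n-1}S_i(t)^2=\Delta_n(t)\not\equiv 0$ (the last equality from $\Delta_{j+1}-\Delta_j=S_j^2$ and $\Delta_0=0$), so $t-z\nmid G$. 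Hence $C_1(l,l)$ is defined by $G$, and its closure in $\BP^1(t)\times\BP^1(z)$ is the bidegree $(n-1,n-1)$ curve given by the bihomogenization of $G$ (matching the genus $(n-2)^2$ via Lemma~\ref{lemma:smoothimpliesirreducible}).

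Next I would treat affine singular points $P=(t_0,z_0)$ of $\{G=0\}$ with $t_0\ne z_0$. From $\partial_tF=G+(t-z)\partial_tG$ and $\partial_zF=-G+(t-z)\partial_zG$ one sees $F,\partial_tF,\partial_zF$ all vanish at $P$, so $P$ is a singular point of $C(l,l)$; exactly as in Proposition~\ref{proposition:Wsmooth1} this forces $S_{n-1}(t_0),S_{n-1}(z_0)\ne0$, $\Delta_n(t_0)=\Delta_n(z_0)=0$, and $h_n(t_0)=h_n(z_0)$. Now the identity $h_n^2(\omega)-1=(2n+1)/S_{n-1}^2(\omega)$ valid at roots $\omega$ of $\Delta_n$ gives $S_{n-1}^2(t_0)=S_{n-1}^2(z_0)$, hence also $S_n^2(t_0)=S_n^2(z_0)$ and $S_n(t_0)S_{n-1}(t_0)=S_n(z_0)S_{n-1}(z_0)$. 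Subtracting the two instances of Lemma~\ref{identities}(a) with $j=n$ (at $t_0$ and at $z_0$) leaves $(t_0-z_0)S_n(t_0)S_{n-1}(t_0)=0$, so $S_n(t_0)=0$; but then $\Delta_n(t_0)=S_n'(t_0)S_{n-1}(t_0)=0$ forces $S_n'(t_0)=0$, contradicting separability of $S_n$. So there are no off-diagonal affine singular points.

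For affine points $(t_0,t_0)$ on the diagonal, note such a point lies on $C_1(l,l)$ exactly when $\Delta_n(t_0)=0$, and there $\nabla G(t_0,t_0)=\tfrac12\Delta_n'(t_0)(1,1)$ (since $G$ is symmetric and $\partial_tG(t_0,t_0)=\tfrac12\frac{d}{dt}\sum_{i}S_i(t)^2|_{t_0}$); so smoothness along the diagonal is equivalent to $\Delta_n$ being separable. To prove this I would argue by contradiction: let $\omega_0$ be a repeated root of $\Delta_n$. Since $\Delta_n$ is even and $\Delta_n(\pm2)=\tfrac16 n(n+1)(2n+1)\ne0$, we have $\omega_0\ne\pm2$, and Lemma~\ref{H}(a) gives $S_{2n}(\omega_0)=2n+1$. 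Because $\Delta_n'=S_n''S_{n-1}-S_{n-1}''S_n=H_n$, the repeated root condition gives $H_n(\omega_0)=0$, so Lemma~\ref{H}(b) yields $S_{2n-1}(\omega_0)=n\omega_0$; the recurrence then gives $S_{2n-2}(\omega_0)=n\omega_0^2-(2n+1)$, and substituting all of this into Lemma~\ref{identities}(a) with $j=2n-1$ collapses to $-n(n+1)\omega_0^2+(2n+1)^2=1$, i.e.\ $\omega_0^2=4$, a contradiction. Finally, at infinity: the coefficient of $t^{n-1}$ in $G$ is $S_{n-1}(z)$ and that of $t^{n-1}z^{n-1}$ is $1$, so the points of $C_1(l,l)$ on $\{t=\infty\}$ are $(\infty,z_0)$ with $S_{n-1}(z_0)=0$, the $z$-partial of the local equation there is $S_{n-1}'(z_0)\ne0$ by separability of $S_{n-1}$, and $(\infty,\infty)\notin C_1(l,l)$; the chart $\{z=\infty\}$ is symmetric. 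This exhausts all points, and irreducibility follows from Lemma~\ref{lemma:smoothimpliesirreducible}.

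The step needing a genuinely new idea is smoothness along the diagonal: there $C(l,l)$ itself is singular (it is nodal along $C_0\cap C_1$), so one cannot simply inherit regularity of the ambient curve as in the $k\ne l$ case, and the valuation/absolute-value comparisons driving Proposition~\ref{proposition:Wsmooth1} degenerate when $k=l$. The substitute is the observation $\Delta_n'=H_n$ together with Lemma~\ref{H}, which turn separability of $\Delta_n$ into the clean relation $S_{2n-1}(\omega_0)=n\omega_0$ at a hypothetical repeated root, after which Lemma~\ref{identities}(a) closes it; the off-diagonal case is then a minor variant of the $k\ne l$ argument, with Lemma~\ref{identities}(a) playing the role that the valuation estimate played there.
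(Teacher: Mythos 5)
Your proof is correct and follows essentially the same route as the paper's: an affine singular point of $C_1(l,l)$ is forced onto the diagonal via the identities for $h_n$ at roots of $\Delta_n$ together with Lemma~\ref{identities}(a), and a diagonal singularity is then excluded because $H_n(\omega_0)=0$ and $S_{2n}(\omega_0)=2n+1$ force $S_{2n-1}(\omega_0)=n\omega_0$ and hence $\omega_0=\pm 2$ via Lemma~\ref{identities}(a), contradicting $\Delta_n(\pm2)\neq 0$. The only differences are cosmetic: you use the Christoffel--Darboux expression $G=\sum_{i<n}S_i(t)S_i(z)$ (so that $G(t,t)=\Delta_n(t)$ and $\nabla G(t_0,t_0)=\tfrac12\Delta_n'(t_0)(1,1)$) where the paper reaches the same condition by l'H\^opital's identification $G_z(t_0,t_0)=-\tfrac12 H_n(t_0)$, and you verify the points at infinity directly rather than citing \cite[Lemma 5.6]{MR2827003}.
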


\begin{proof}
Let $F=S_n(t)S_{n-1}(z)-S_{n-1}(t)S_n(z)$ and $G=F/(z-t)$. Then $C_1(l,l)$ is defined by $G(t,z)=0$. Any singular point of $C_1(l,l)$ is also a singular point of $C(l,l)$. By \cite[Lemma 5.6]{MR2827003}, we find that $C(l,l)$ is smooth at all points at infinity, so $C_1(l,l)$ is as well. Assume that $P=(t_0,z_0)$ is a singular point of the standard affine part of $C_1(l,l)$. Then $P$ is also a singular point of $C(l,l)$. Note that $\Delta_n(t_0)=0$ and $\Delta_n(z_0)=0$, and we may rewrite $F(P)=0$ as $h_n(t_0)=h_n(z_0)$. Recall $(\om^2-4)\Delta_n(\om)=S_{2n}(\om)-(2n+1)$.

Since $S_n^2(\om)-\om S_n(\om)S_{n-1}(\om)+S_{n-1}^2(\om)=1$ and $S_n^2(\om)-S_{n-1}^2(\om)=S_{2n}(\om)$, we have
$$h_n(\om)+h^{-1}_n(\om)=\om + \frac{1}{S_n(\om)S_{n-1}(\om)}$$
and 
$$h_n(\om)-h^{-1}_n(\om)=\frac{S_{2n}(\om)}{S_n(\om)S_{n-1}(\om)}.$$
Since $h_n(t_0)=h_n(z_0)$ and $S_{2n}(t_0)=S_{2n}(z_0)=2n+1$, we conclude that $t_0=z_0$. 

Recall that $H_n=S''_nS_{n-1}-S_nS''_{n-1}$. By l'Hopital's rule, we have
$$-H_n(t_0)=F_{zz}(t_0,t_0)=\lim_{z \to t_0} \frac{F_z(t_0,z)}{z-t_0}=2\lim_{z \to t_0} \frac{F(t_0,z)}{(z-t_0)^2}=2 \lim_{z \to t_0} \frac{G(t_0,z)}{z-t_0}=2 G_z(t_0,t_0).$$
The fact that $C_1(l,l)$ is singular at $P=(t_0,t_0)$ implies that $0=G_z(P)=-\frac{1}{2}H_n(t_0)$. Hence, by Lemma \ref{H}
$$(2n-2)t_0 S_{2n}(t_0)-(4n+2)S_{2n-1}(t_0)+(4n+2)t_0=(t_0^2-4)^2 H_n(t_0)=0.$$
Since $S_{2n}(t_0)=2n+1$, we obtain $S_{2n-1}(t_0)=nt_0$. Since $S^2_{2n}(t_0)-t_0S_{2n}(t_0)S_{2n-1}(t_0)+S^2_{2n-1}(t_0)=1$, we conclude that $t_0=\pm 2$. This is a contradiction, since $\Delta_n(\pm 2) \not= 0$ by direct calculation. We are done.
\end{proof}

\begin{proposition}\label{proposition:irreducibility}
The algebraic set $C(k,l)$ is smooth and has 1 irreducible component if $k\neq l$. The curve $C(3,3)=C(-3,-3)$ is given by $t=z$.  If $k=l$ and $|l|>3$ then $C(k,l)$  has 2 irreducible components, $C_0(l,l)$ and $C_1(l,l)$. Both of $C_0(l,l)$ and $C_1(l,l)$ are smooth.
\end{proposition}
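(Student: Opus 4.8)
The plan is to split into three cases---$k\neq l$, then $k=l$ with $|l|=3$, then $k=l$ with $|l|>3$---and in each to feed the smoothness statements already proved (Propositions~\ref{proposition:Wsmooth1} and~\ref{proposition:Wsmooth2}) into Lemma~\ref{lemma:smoothimpliesirreducible}. The only real work left is to compute the bidegree of the relevant defining polynomial in $\BP^1(t)\times\BP^1(z)$ and check that both entries are positive; every such computation is a short consequence of Lemma~\ref{lemma:chebyshevidentities}, the one subtlety being the sign of $m$ and $n$, which is exactly why the answers come out in terms of $\lfloor|k|/2\rfloor$ and $\lfloor|l|/2\rfloor$.

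First I would handle $k\neq l$. A short computation with Lemma~\ref{lemma:chebyshevidentities}---keeping track of whether $m,n$ are positive or negative, and noting that the leading coefficients that occur are $\pm S_{m-1}(z)$ or $\pm S_m(z)$, which are nonzero polynomials because $|k|>1$---shows that $S_n(t)S_{m-1}(z)-S_{n-1}(t)S_m(z)$ has bidegree $(\lfloor|l|/2\rfloor,\lfloor|k|/2\rfloor)$, both entries being at least $1$ since $|k|,|l|\ge 3$. Combining this with Proposition~\ref{proposition:Wsmooth1} and Lemma~\ref{lemma:smoothimpliesirreducible} gives that $C(k,l)$ is smooth and irreducible (and, as a byproduct, Lemma~\ref{lemma:smoothimpliesirreducible} yields its genus $(\lfloor|k|/2\rfloor-1)(\lfloor|l|/2\rfloor-1)$, matching the $k\neq l$ case of Theorem~\ref{theorem:genus}).

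For $k=l=\pm 3$ I would simply evaluate: $l=3$ gives $m=n=1$ and the defining polynomial $S_1(t)S_0(z)-S_0(t)S_1(z)=t-z$, while $l=-3$ gives $m=n=-2$ and, from the recurrence, $S_{-2}=-1$ and $S_{-3}=-\omega$, so the polynomial is $(-1)(-z)-(-t)(-1)=z-t$; either way $C(3,3)=C(-3,-3)$ is the line $\{t=z\}$. For $k=l$ with $|l|>3$, write $F=S_n(t)S_{n-1}(z)-S_{n-1}(t)S_n(z)$; the computation above (with $m=n$) shows $F$ has bidegree $(\lfloor|l|/2\rfloor,\lfloor|l|/2\rfloor)$. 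Since $F$ vanishes on $\{t=z\}$ it is divisible by $t-z$, so $C(l,l)=C_0(l,l)\cup C_1(l,l)$ set-theoretically, where $C_0(l,l)=\{t=z\}\cong\BP^1$ is smooth and irreducible and $C_1(l,l)$ is the zero set of $G=F/(t-z)$. Because $t-z$ has degree one in each of $t$ and $z$, $G$ has bidegree $(\lfloor|l|/2\rfloor-1,\lfloor|l|/2\rfloor-1)$, whose entries are at least $1$ when $|l|\ge 5$; so Proposition~\ref{proposition:Wsmooth2} and Lemma~\ref{lemma:smoothimpliesirreducible} make $C_1(l,l)$ smooth and irreducible. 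To conclude there are exactly two components I would check $C_0(l,l)\neq C_1(l,l)$, i.e. that $G$ does not vanish identically on $\{t=z\}$: differentiating $F=(t-z)G$ with respect to $t$ and setting $t=z$ gives $G(z,z)=S_n'(z)S_{n-1}(z)-S_{n-1}'(z)S_n(z)=\Delta_n(z)$, which is not the zero polynomial by Lemma~\ref{H}, since $(\omega^2-4)\Delta_n(\omega)=S_{2n}(\omega)-(2n+1)$ is a nonconstant polynomial. Hence $C(l,l)$ has exactly the two smooth irreducible components $C_0(l,l)$ and $C_1(l,l)$.

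The genuinely hard parts---the smoothness assertions---are already in hand, so I expect the main obstacle here to be purely bookkeeping: pinning down the bidegrees correctly in the ranges $m<0$ and $n<0$ using Lemma~\ref{lemma:chebyshevidentities} (and checking the relevant leading coefficients do not vanish), and, in the case $k=l$, cleanly separating $C_0(l,l)$ from $C_1(l,l)$ via the observation $G|_{t=z}=\Delta_n\not\equiv 0$.
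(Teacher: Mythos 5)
Your proposal is correct and follows essentially the same route as the paper: feed the smoothness results of Propositions~\ref{proposition:Wsmooth1} and~\ref{proposition:Wsmooth2} (together with smoothness of the line $t=z$) into Lemma~\ref{lemma:smoothimpliesirreducible} after checking the bidegrees are positive. Your extra verifications---the explicit evaluation for $|l|=3$, the bidegree bookkeeping for negative $m,n$, and especially the check that $G|_{t=z}=\Delta_n\not\equiv 0$ so that $C_0(l,l)\neq C_1(l,l)$---are details the paper leaves implicit, and they are all carried out correctly.
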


\begin{proof}
By Lemma~\ref{lemma:smoothimpliesirreducible} it suffices to show that $C(k,l)$ is smooth.  If $k\neq l$, then $C(k,l)$ is smooth by Proposition~\ref{proposition:Wsmooth1}.  If $k=l$ then $C_1(l,l)$ is smooth by Proposition~\ref{proposition:Wsmooth2}. The proposition follows since $C_0(l,l)$ is given by $t=z$ and  is smooth. 
\end{proof}

\begin{figure}[h!]
\begin{center}
\includegraphics[scale=.5]{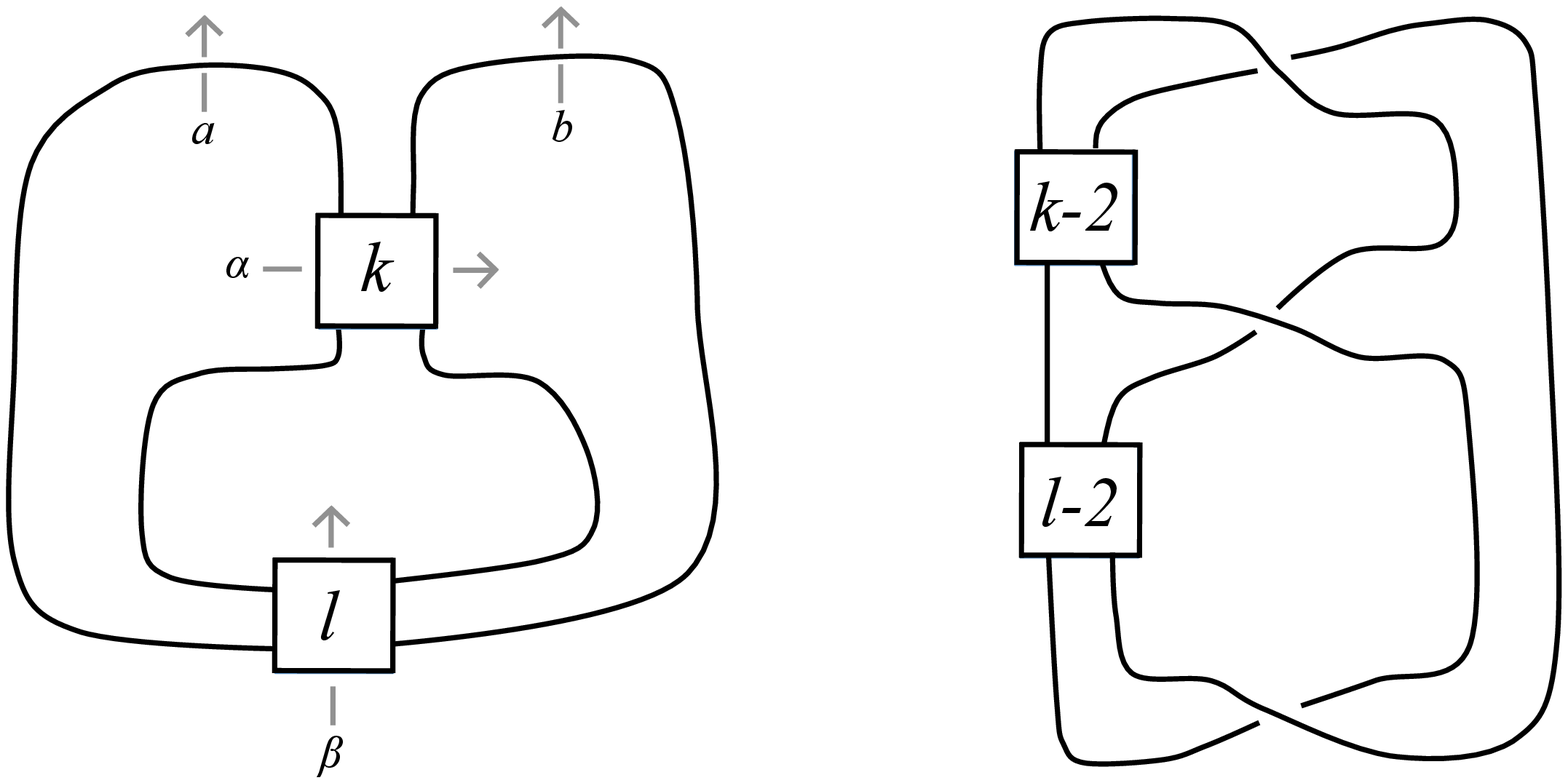}
\end{center}
\vspace{-4cm}
\caption{Meridian loops on double twist links and the four plat presentation.}
\label{fig:4plat}
\end{figure}

We have shown that if $k\neq l$ then $X_{irr}(k,l)$ is a single irreducible component.  When $k=l$ and $|l|>3$, we have shown that 
$X_{irr}(k,l)$ is comprised of two irreducible components, and we now identify the canonical component. 

\begin{lemma}\label{lemma:canonicalcomponent} When $k\neq l$  then $X_0(k,l)$ is birational to $C_0(k,l) \times \BC$. 
The curve $C(3,3)=C(-3,-3)$ is given by $t=z$ and $X_0(3,3)$ is birational to $C(3,3) \times \BC$.
When $k=l$ and $|l|>3$ then $X_0(l,l)$ is birational to $C_0(l,l)\times \BC$ and there is one more irreducible component of $X_{irr}(l,l)$, birational to $C_1(l,l)\times \BC$.
\end{lemma}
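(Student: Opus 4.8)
The plan is to identify which of the irreducible components of $X_{irr}(k,l)$ contains the character of a discrete faithful representation, by tracking that character through the sequence of birational equivalences established in Sections~\ref{section:birational} and~\ref{section:smoothnessandirreducibility}. First I would recall that the discrete faithful representation $\rho_0$ of $\pi_1(k,l)$ is (conjugate to) a representation in which the two meridians $a$ and $b$ are conjugate parabolic or loxodromic elements; in the two-bridge setting, with the four-plat presentation indicated in Figure~\ref{fig:4plat}, the two meridians of the link $J(k,l)$ are symmetric, so one expects $x=\tr\rho_0(a)=\tr\rho_0(b)=y$ along the canonical component. This symmetry $x=y$ should be the key point: I would argue that on any component of $X_{irr}(k,l)$ containing $\chi_{\rho_0}$, the identity $x=y$ holds generically, either by exhibiting the symmetry of the link as an automorphism of $\pi_1(k,l)$ interchanging $a$ and $b$ (up to conjugation) that fixes the canonical component, or by a direct check that the discrete faithful character satisfies $x=y$ and that the component through it is one-dimensional in the relevant slice.

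Next I would translate the condition $x=y$ through the birational maps. Under the substitution $u=xS_m(z)-yS_{m-1}(z)$, $v=yS_m(z)-xS_{m-1}(z)$ of Proposition~\ref{proposition:birational1}, the locus $x=y$ becomes $u=v$; indeed $u-v=(x-y)(S_m(z)+S_{m-1}(z))$, so away from the codimension-one locus handled in Lemma~\ref{lemma:zeroset} the conditions $x=y$ and $u=v$ agree. Then under the substitution replacing $u$ by $t$ (Proposition~\ref{proposition:birational2}), where $t=uv-z(S_m^2(z)+S_{m-1}^2(z))+4S_m(z)S_{m-1}(z)$, one computes that on $u=v$, $t=v^2-z(S_m^2(z)+S_{m-1}^2(z))+4S_m(z)S_{m-1}(z)$, and using Lemma~\ref{identities}(a) this simplifies — I expect to $t=z$ after using $S_m^2(z)+S_{m-1}^2(z)-zS_m(z)S_{m-1}(z)=1$ together with the relation forced by lying on $W(l,l)$. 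That is, the symmetric locus maps onto the component $t=z$, which is exactly $C_0(l,l)\times\BC$ in the cases $k=l$, and is all of $C(k,l)$ (respectively $C(3,3)$) when $k\ne l$ (respectively $k=l=\pm3$). Conversely, in the case $k=l$ with $|l|>3$, the component $C_1(l,l)$ is the scheme-theoretic complement of $\{t=z\}$, so it is \emph{not} the canonical component; it is the remaining irreducible component of $X_{irr}(l,l)$, birational to $C_1(l,l)\times\BC$ by Theorem~\ref{theorem:VbirationaltoW} and Proposition~\ref{proposition:irreducibility}.

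The remaining ingredient is to confirm that a component of the correct dimension survives this identification — i.e., that $C_0(l,l)\times\BC$ (which has dimension two) genuinely contains $\chi_{\rho_0}$ and is a canonical component. Since $J(k,l)$ has two cusps, Thurston's theorem says $\dim X_0(k,l)=2$, which matches; and since $C_0(l,l)$ is a rational curve ($t=z$), $X_0(l,l)$ is birational to $\BC^2$, consistent with the degree-of-irrationality statement in Theorem~\ref{theorem:degreeofirrationality}. When $k\ne l$ there is only one component of $X_{irr}(k,l)$, so it must be the canonical one and the claim is immediate from Theorem~\ref{theorem:VbirationaltoW}; when $k=l=\pm 3$ likewise $C(3,3)=\{t=z\}$ is irreducible, so $X_0(3,3)$ is birational to $C(3,3)\times\BC$. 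I expect the main obstacle to be the first step: rigorously showing that the canonical component lies in the symmetric locus $x=y$ (equivalently $t=z$) rather than on $C_1(l,l)$. The cleanest route is probably to show that $C_1(l,l)$ does \emph{not} contain the discrete faithful character — for instance by checking that the discrete faithful character has $x=y$ (a direct consequence of the strong symmetry of the two-bridge link $J(l,l)$, which admits an isotopy exchanging its two components and hence an automorphism of the group swapping the meridian pair) — so that the canonical component, being irreducible and containing that character, must be the $t=z$ component. Everything else is a routine propagation of this identity through the explicit birational maps already recorded.
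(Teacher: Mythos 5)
The first part of your argument (for $k\neq l$, and for $k=l=\pm3$, the claim follows immediately from irreducibility plus Theorem~\ref{theorem:VbirationaltoW}) is fine and matches the paper. But the substantive case $k=l$, $|l|>3$ has a genuine gap, and it is exactly at the step you flagged as the ``main obstacle.'' Your key claim --- that $x=y$ holds generically on the canonical component --- is false. The symmetry of $J(k,l)$ exchanging the two meridians only shows that the involution $(x,y,z)\mapsto(y,x,z)$ preserves $X(k,l)$; since the defining polynomial $\varphi$ is already symmetric in $x$ and $y$, this involution preserves every component and says nothing about which component is canonical. Its fixed locus $\{x=y\}$ meets the two-dimensional canonical component only in a curve: Dehn surgery space near the complete structure is an open two-complex-dimensional set on $X_0(l,l)$ on which the traces of the two meridians vary independently, so $X_0(l,l)$ certainly contains characters with $x\neq y$. (The paper makes precisely this point in the remarks closing Section~\ref{section:smoothnessandirreducibility}: the $a\leftrightarrow b^{-1}$ symmetries act trivially on $\varphi$ and do not distinguish components.) The propagation step also fails independently: on $u=v$ one gets, using Lemma~\ref{identities}(a), $t=v^2-z+(4-z^2)S_m(z)S_{m-1}(z)$, which is not identically $z$; the locus $\{u=v\}$ is a curve in the surface $U(l,l)$, not the component $C_0(l,l)\times\BC$.

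The paper's argument uses a different symmetry and different trace functions. When $k=l$, flipping the four-plat upside down fixes the unoriented free homotopy classes of $a$ and $b$ but swaps the class of $ab^{-1}$ (trace $z$) with that of $w_k$ (trace $t$). By Mostow rigidity the discrete faithful character is fixed by the induced automorphism, forcing $t=z$ there, and the same holds for the holonomy characters of the (symmetric) Dehn fillings, which are Zariski dense in $X_0(l,l)$ by Thurston's hyperbolic Dehn surgery theorem; hence the canonical component is the $t=z$ component $C_0(l,l)\times\BC$, and $C_1(l,l)\times\BC$ is the remaining non-canonical component. If you want to salvage your approach, you would need to replace the condition $x=y$ by a condition on traces that actually distinguishes $C_0$ from $C_1$, i.e.\ one involving $\chi_\rho(w_k)$ --- which is essentially the paper's argument.
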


\begin{proof} By Theorem~\ref{theorem:VbirationaltoW} $X_{irr}(k,l)$ is birational to $C(k,l)\times \BC$. 
When $k\neq l$  Proposition~\ref{proposition:irreducibility} shows that $X_0(k,l)=X_{irr}(k,l)$.    

By the definition of $C_0(l,l)$ it suffices to show that $t=z$ corresponds to the canonical component.
 By construction, $z=\chi_{\rho}(ab^{-1})$  corresponds to the loop  $\alpha$ pictured in Figure~\ref{fig:4plat}.  Moreover, $t=\chi_{\rho}(w_k)$ corresponds to the loop $\beta$ pictured in the figure.  When $k=l$ the symmetry induced by flipping the four plat upside down swaps these loops and so on the level of the character variety induces $t=z$. This must hold for any discrete faithful representation, and all Dehn fillings.  By work of Thurston \cite{Thurston}, all but finitely many of these are on canonical components, and so are dense. (See \cite{MR1248091} and also \cite{MR2827003} Section 2.3.)  The fact that there are exactly two irreducible components in this case follows from   Proposition~\ref{proposition:irreducibility}.
\end{proof}

We summarize this section in the following theorem. 
\begin{reptheorem}{theorem:summary1}{\em
 Let $k=2m+1$ and $l=2n+1$. The algebraic set $X_{irr}(k,l)$ is  birational to $C(k,l)\times \BC$ where the curve $C(k,l)\subset \BC^2(t,z)$ is given by 
\[ C(k,l) = \{S_n(t)S_{m-1}(z)-S_{n-1}(t)S_m(z)=0\}.\]

If $k\neq l$ then $C(k,l)$ is smooth and irreducible as considered in $\BP^1(t)\times \BP^1(z)$ and $X_0(k,l)=X_{irr}(k,l)$ is birational to $C(k,l)\times \BC$.  

The curve $C(3,3)=C(-3,-3)$ is given by $t=z$. If $k=l$ and $|l|>3$ then $C(l,l)$ is the union of exactly two components, $C_0(l,l)$ given by $t=z$ and $C_1(l,l)$, the scheme-theoretic complement of $C_0(l,l)$ in $C(l,l)$. Both are smooth and irreducible  as considered in $\BP^1(t)\times \BP^1(z)$. The algebraic set $X_{irr}(l,l)= X_0(l,l) \cup X_1(l,l)$ where  $X_0(l,l)$ is birational to  $ C_0(l,l)\times \BC$ and $X_1(l,l)$ is birational to $ C_1(l,l)\times \BC$. }
\end{reptheorem}

We conclude this section with a few remarks about symmetries. 
The proof of Lemma~\ref{lemma:canonicalcomponent} relied on analysis of the symmetry which flips the four plat upside down.  For all $k$ and $l$, the link complement $S^3-J(k,l)$ has non-trivial symmetry group.  In the case when $k\neq l$ this is generated by the flip about a vertical axis through the $k$ half twists, and the analogous symmetry through an axis through the $l$ half twists.  (In Figure~\ref{fig:4plat} this axis is a circle through the middle of the $l$ half twists and going horizontally through the $k$ box.)  These symmetries both take the loop corresponding to $a$ to a loop freely homotopic to the  loop corresponding to $b^{-1}$, and the free homotopy class of the un-oriented  loop corresponding to $ab^{-1}$ is fixed.  The effect on the character variety is that $x=\chi_{\rho}(a)$ is sent to $y=\chi_{\rho}(b)=\chi_{\rho}(b^{-1})$  and $z=\chi_{\rho}(ab^{-1})$ is fixed.  The variable $t=\chi_{\rho}(w_k) $ is symmetric in $x$ and $y$, and so is $\varphi$. The effect of these symmetries on the defining equation, $\varphi$, of the character variety is trivial.  These symmetries are only reflected in the symmetry of the defining equations themselves. We conclude that even the non-geometric representations algebraically preserve this symmetry. However, when $k=l$ the additional symmetry fixes the un-oriented free homotopy class of loops corresponding to $a$ and also for $b$, but takes the un-oriented loop corresponding to $ab^{-1}$ to a loop freely homotopic to one corresponding to $\om_k$.  This is not freely homotopic to $(ab^{-1})^{\pm1}$.   It is this that induces the factoring of the defining equation, $\varphi$. In this case, when $|l|>3$ there is a component which corresponds to necessarily non-geometric representations which do not algebraically preserve this symmetry.

\section{Further Invariants} 

We have established  in Theorem~\ref{theorem:summary1} that when $k\neq l$, $X_0(k,l)$ is birational to $C_0(k,l)\times \BC$, and that $C_0(k,l)$ is smooth and irreducible in $\BP^1\times \BP^1$.  We have also shown that $X_{irr}(l,l)$ is birational to the union of $C_0(l,l)\times \BC$ and $C_1(l,l) \times \BC$. 
We now compute the genus of these curves, and the degree of irrationality of $X_0(k,l)$ and $X_1(l,l)$.

\begin{lemma}\label{lemma:bidegree}
When $k\neq l$  the bidegree of $C(k,l)$ is $(\lfloor  \tfrac{|k|}2 \rfloor, \lfloor  \tfrac{|l|}2 \rfloor)$.  The bidegree of $C_1(l,l)$ is  $(\lfloor  \tfrac{|l|}2 \rfloor -1, \lfloor  \tfrac{|l|}2 \rfloor -1)$.
\end{lemma}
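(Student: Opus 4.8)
The plan is to read off the bidegree of $C(k,l)$ directly from the defining polynomial $S_n(t)S_{m-1}(z)-S_{n-1}(t)S_m(z)$ by computing the $t$-degree and the $z$-degree separately, using the degree formula from Lemma~\ref{lemma:chebyshevidentities}. Recall that $\deg S_j = j$ when $j>-1$ and $\deg S_j = -j-2$ when $j<-1$. So the first step is to determine, as a polynomial in $t$ with coefficients in $\BQ[z]$, the highest power of $t$ appearing: the two terms contribute $t$-degrees $\deg S_n$ and $\deg S_{n-1}$, and I must check that no cancellation occurs in the top-degree coefficient. Since $S_m(z)$ and $S_{m-1}(z)$ are coprime nonzero polynomials in $z$, the leading $t$-coefficients (which are, up to sign, $\pm S_{m-1}(z)$ or $\pm S_m(z)$) cannot cancel, so the $t$-degree is exactly $\max(\deg S_n, \deg S_{n-1})$. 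A short case analysis on the sign of $n$ (equivalently $l=2n+1>1$ or $<-1$) gives $\max(\deg S_n,\deg S_{n-1}) = \lfloor |l|/2\rfloor$ in both cases: for $l>1$, $n\ge 1$ and the max is $n=\lfloor l/2\rfloor$; for $l<-1$, $n\le -1$ and the relevant degrees are $-n-2$ and $-n-1$, whose max is $-n-1 = \lfloor |l|/2\rfloor$. The identical computation in the variable $z$, using coprimality of $S_n(t),S_{n-1}(t)$, gives $z$-degree $\lfloor |k|/2\rfloor$. This establishes the bidegree of $C(k,l)$ as $(\lfloor |k|/2\rfloor, \lfloor |l|/2\rfloor)$, noting the statement's convention that the first coordinate of the bidegree records the degree in the \emph{first} projective factor $\BP^1(t)$; I will simply match whichever ordering is used consistently elsewhere.

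For $C_1(l,l)$, the defining polynomial is $G = F/(z-t)$ where $F = S_n(t)S_{n-1}(z)-S_{n-1}(t)S_n(z)$ (here $k=l$ so $m=n$), and $z-t$ genuinely divides $F$ since $F$ vanishes on the diagonal $t=z$. The bidegree of $F$ is $(\lfloor |l|/2\rfloor, \lfloor |l|/2\rfloor)$ by the computation above (with $k=l$). Dividing by the bidegree-$(1,1)$ polynomial $z-t$ drops each of the two degrees by exactly one, \emph{provided} the diagonal factor is extracted cleanly in the sense of bidegrees — i.e. that $G$ really does have $t$-degree $\lfloor|l|/2\rfloor - 1$ and $z$-degree $\lfloor|l|/2\rfloor-1$, rather than only one of them dropping. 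The cleanest way to see this is to homogenize and work in $\BP^1\times\BP^1$: $F$ as a section of $\mathcal{O}(a,a)$ with $a=\lfloor|l|/2\rfloor$ vanishes on the diagonal divisor of class $(1,1)$, hence $G = F/(z-t)$ is a section of $\mathcal{O}(a-1,a-1)$, which is precisely the assertion. Alternatively one can argue directly: the top $t$-degree part of $F$ is $(\text{top of }S_n(t))S_{n-1}(z) - (\text{top of }S_{n-1}(t))S_n(z)$, and one checks this is divisible by $(z-t)$ with quotient of the expected degree — but the $\BP^1\times\BP^1$ bookkeeping is less error-prone.

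I expect the only real subtlety — hardly an obstacle — to be the sign-of-$l$ case analysis in the degree formula and making sure the bidegree convention (which factor is "first") is stated consistently with Lemma~\ref{lemma:smoothimpliesirreducible} and Theorem~\ref{theorem:genus}; everything else is a direct degree count leaning on the coprimality of consecutive Chebyshev polynomials, which is used repeatedly in Section~\ref{section:smoothnessandirreducibility} already. I would present the proof as: (1) state the degree formula and coprimality; (2) compute the $t$-degree of $C(k,l)$ by cases on $\mathrm{sign}(n)$; (3) note the $z$-degree follows symmetrically; (4) for $C_1(l,l)$, pass to $\BP^1\times\BP^1$ and divide the bidegree-$(a,a)$ form $F$ by the bidegree-$(1,1)$ diagonal, concluding bidegree $(a-1,a-1)$.
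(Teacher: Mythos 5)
Your proposal is correct and follows essentially the same route as the paper: read the bidegree of $C(k,l)$ off the Chebyshev degree formula from Lemma~\ref{lemma:chebyshevidentities} (case-splitting on the signs of $n$ and $m$), then subtract the bidegree-$(1,1)$ diagonal to get the bidegree of $C_1(l,l)$. The paper's proof is a terser version of this, and your additional checks (no cancellation in the top coefficient, the divisor-class bookkeeping in $\BP^1\times\BP^1$ for the quotient by $z-t$) simply make explicit what the paper leaves implicit.
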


\begin{proof}

By Lemma~\ref{lemma:chebyshevidentities},    $S_{-1}=0$ and the degree of $S_j$ is $j$ when $j>0$ and $-j-2$ when $j<-1$. 
Therefore, the bidegree of $C(k,l)$ is $(a,b)$ where $a=n$ if $n>0$ and $a=-n-1$ if $n<-1$ and $b=m$ if $m>0$ and $b=-m-1$ if $m<-1$. This is equivalent to $a=\lfloor  \tfrac{|k|}2 \rfloor $ and $b=\lfloor  \tfrac{|l|}2 \rfloor.$  The computation for $C_1(l,l)$ follows from this using the definition of $C_1(l,l)$. 
\end{proof}

\begin{reptheorem}{theorem:genus}{\em  Let $|k|, |l|>1$. When $k\neq l$  the genus of $C(k,l)$ is $(\lfloor  \tfrac{|k|}2 \rfloor -1)( \lfloor  \tfrac{|l|}2 \rfloor-1)$.  The genus of $C_0(l,l)$ is zero, and for $|l|>3$ the genus of $C_1(l,l)$ is  $(\lfloor  \tfrac{|l|}2 \rfloor -2)^2$.}
\end{reptheorem}

\begin{proof}The result follows from the following, by Lemma~\ref{lemma:bidegree}. If $C$ is a smooth projective curve in $\BP^1\times \BP^1$ of bidegree $(a,b)$ then the genus is $(a-1,b-1)$ (see \cite{MR2827003}). 
\end{proof}

\begin{definition}
Let $X$ be an irreducible (affine or projective) complex variety of dimension $n$.  The degree of irrationality of $X$, $\gamma(X)$ its the minimal degree of any rational map from $X$ to a dense subset of $\BC^n$.  When $X$ is a curve, this is also called the gonality of $X$. 
\end{definition}

The gonality, in its relation to character varieties and Dehn filling is discussed at length in \cite{PetersenReid}.  Moreover, the gonality of the components of the $\SL_2(\BC)$ and $\PSL_2(\BC)$ character varieties are computed (Theorem 9.2, Theorem 9.4). We now compute the degree of irrationality of our sets.

\begin{reptheorem}{theorem:degreeofirrationality}{\em
The degree of irrationality of $X_0(k,l)$ is $\min\{  \lfloor  \tfrac{|k|}2 \rfloor, \lfloor  \tfrac{|l|}2 \rfloor \}$ when $k\neq l$.  The degree of irrationality of $X_0(l,l)$ is 1, and the degree of irrationality of $X_1(l,l)$ is $\lfloor  \tfrac{|l|}2 \rfloor -1$.}
\end{reptheorem}

\begin{proof}

The degree of irrationality of a surface of the form $C\times \BC$ is equal the gonality of $C$
\cite{MR1327053} (Prop 1) and \cite{MR1337188}. (If $C$ is a non-singular projective curve then $C\times \BC$ is a non-singular projective surface since the fibers have genus zero.) Following \cite{PetersenReid} (Lemma 9.1) if $C$ is a smooth irreducible curve in $\BP^1\times \BP^1$ of bidegree $(a,b)$ with $ab\neq 0$ then the gonality of $C$ is $\min\{a,b\}$. The result now follows from Lemma~\ref{lemma:bidegree}.
\end{proof}

\section{Desingularization of  $X_0(3,2m+1)$} 

The simplest subfamily of the hyperbolic 2-component double twist links is when $k=3$ (so $n=1$). This family includes the Whitehead link $5_1^2=(8/3)$ which is  $J(3,3)$, and $6_2^2=(10/3)$ which is $J(3,-3)$.  
In this section we first determine the singular points of the natural model of $X_0(3,l)$ where $l=2m+1$ in Proposition~\ref{proposition:singularpoints}.  In Proposition~\ref{proposition:degeneratefibers}  we determine the degenerate fibers of the map $\phi: S \to \BP^1$, $(x:y:u,z:w) \mapsto (z:w)$.  We then show in Theorem~\ref{theorem:blowup} that the desingularization of the natural model for $X_0(3,2m+1)$ is a series of blowups of $\BP^1\times \BP^1$.  

By Theorem~\ref{theorem:summary1}, $X_0(3,l)=X_{irr}(3,l)$ is birational to $C(3,l)\times \BC$ where $C(3,l)$ is given by $tS_{m-1}(z)=S_m(z)$ in $\BC^2(t,z)$. Since this defining polynomial is linear in $t$ we conclude that $C(3,l)$ is itself birational to $\BC$ and $X_0(3,l)$ is indeed birational to $\BC^2$.  The Whitehead link, $J(-3,-3)=J(3,3)$ is a degenerate case of the $J(3,l)$ links, where $X_0(3,3)=X_{irr}(3,3)$ is given by $t=z$ up to birational equivalence.  

We begin by homogenizing the defining polynomial for $X_0(3,l)$, where $l=2m+1$. 
Recall that
$$t=\left( xS_m(z)-yS_{m-1}(z) \right)\left( yS_m(z)-xS_{m-1}(z) \right)-z\left(S^2_m(z)+S^2_{m-1}(z)\right)+4S_m(z)S_{m-1}(z).$$
Since $S^2_m(z)+S^2_{m-1}(z)-z S_m(z)S_{m-1}(z)=1$, this simplifies to 
\[ t=  xy-z+(xyz+4-x^2-y^2-z^2)S_m(z)S_{m-1}(z). \]
The defining polynomial for the natural model of $X_0(3,l)$ is $tS_{m-1}(z)-S_m(z)$ in $\BC[x,y,z]$. We now homogenize it. 
\begin{definition}
Let $T_j=T_j(z,w)=w^jS_j(\frac{z}{w})$. 
\end{definition}
The following is a direct consequence of the Chebyshev identity $S^2_j(\omega)+S^2_{j-1}(\omega)-\omega S_j(\omega)S_{j-1}(\omega)=1$. 
\begin{lemma}
\label{T}
We have 
$$T_j^2+w^2T_{j-1}^2-z \, T_j T_{j-1}=w^{2j}.$$
\end{lemma}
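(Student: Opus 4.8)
The statement to prove is Lemma~\ref{T}: with $T_j = T_j(z,w) = w^j S_j(z/w)$, we have $T_j^2 + w^2 T_{j-1}^2 - z\,T_j T_{j-1} = w^{2j}$.

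The plan is to reduce this directly to the Chebyshev identity recorded in Lemma~\ref{identities}(a), namely $S_j^2(\omega) + S_{j-1}^2(\omega) - \omega S_j(\omega)S_{j-1}(\omega) = 1$, by the substitution $\omega = z/w$ and clearing denominators. First I would set $\omega = z/w$ (working at a generic point where $w \ne 0$, which suffices since we are proving a polynomial identity). By definition $T_j = w^j S_j(\omega)$ and $T_{j-1} = w^{j-1}S_{j-1}(\omega)$. Substituting into the left-hand side:
\[
T_j^2 + w^2 T_{j-1}^2 - z\,T_j T_{j-1} = w^{2j}S_j^2(\omega) + w^2 \cdot w^{2j-2}S_{j-1}^2(\omega) - (w\omega)\cdot w^j S_j(\omega)\cdot w^{j-1}S_{j-1}(\omega),
\]
and every term carries a factor $w^{2j}$, so this equals $w^{2j}\bigl(S_j^2(\omega) + S_{j-1}^2(\omega) - \omega S_j(\omega)S_{j-1}(\omega)\bigr) = w^{2j}\cdot 1 = w^{2j}$ by Lemma~\ref{identities}(a).

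To make the argument fully rigorous without the genericity caveat, I would instead verify that $T_j$ is genuinely a polynomial in $\BZ[z,w]$ and that the asserted identity is an identity of polynomials. Both $T_j^2 + w^2T_{j-1}^2 - zT_jT_{j-1}$ and $w^{2j}$ are polynomials in $z,w$ which agree on the Zariski-dense open set $\{w \ne 0\}$ (by the computation above), hence they are equal as polynomials. Alternatively, one can give a self-contained induction on $j$: the base cases $j=0$ ($T_0 = 1$, $T_{-1} = 0$, giving $1 = 1$) and $j=1$ ($T_1 = z$, $T_0 = 1$, giving $z^2 + w^2 - z^2 = w^2$) are immediate, and the recursion $T_{j+1} = zT_j - w^2 T_{j-1}$ (the homogenization of $S_{j+1}(\omega) = \omega S_j(\omega) - S_{j-1}(\omega)$) lets one pass from the identity at $j$ to the identity at $j+1$ by direct substitution; but the substitution argument is cleaner and I would present that.

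I do not anticipate a real obstacle here: this is a one-line consequence of a Chebyshev identity already stated in the excerpt, and the only thing to be careful about is the bookkeeping of powers of $w$, which works out because each monomial in the defining quadratic form is homogeneous of the right degree once the weights $\deg T_j = j$, $\deg z = \deg w = 1$ are assigned. So the "hard part" is essentially nil — it is a homogenization check.
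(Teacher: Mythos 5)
Your proof is correct and is exactly the route the paper takes: the paper states Lemma \ref{T} as a direct consequence of the identity $S_j^2(\omega)+S_{j-1}^2(\omega)-\omega S_j(\omega)S_{j-1}(\omega)=1$ from Lemma \ref{identities}(a), obtained by substituting $\omega=z/w$ and observing that every term is homogeneous of degree $2j$. Your additional remarks on handling $w=0$ (density of $\{w\neq 0\}$, or induction via $T_{j+1}=zT_j-w^2T_{j-1}$) are fine but not needed beyond what the paper implicitly does.
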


It is now elementary to determine the homogenous defining polynomial. 
\begin{lemma}\label{lemma:homoganeouspolynomial}
The homogenization of the defining polynomial 
$tS_{m-1}(z)-S_m(z)$ 
in $\BP^2 \times \BP^1=\{ (x:y:u,z:w) \} $ is
\[
F = \left[ (xyw-u^2z)w^{2m}+(xyzw+4u^2w^2-x^2w^2-y^2w^2-u^2z^2)T_mT_{m-1} \right] T_{m-1}-u^2w^{2m}T_m.
\]
\end{lemma}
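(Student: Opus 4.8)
\textbf{Proof proposal for Lemma~\ref{lemma:homoganeouspolynomial}.}

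The plan is to carry out the homogenization directly, tracking degrees carefully, and to use the already-simplified form of $t$ recorded just before Definition~\ref{definition:chebyshev} of $T_j$. Recall that in $\BC[x,y,z]$ we have shown
\[ t = xy - z + (xyz+4-x^2-y^2-z^2)S_m(z)S_{m-1}(z), \]
so the affine defining polynomial of the natural model of $X_0(3,l)$ is
\[ tS_{m-1}(z) - S_m(z) = \big[xy-z+(xyz+4-x^2-y^2-z^2)S_m(z)S_{m-1}(z)\big]S_{m-1}(z) - S_m(z). \]
First I would record the degrees of the constituent pieces in the variables $x,y$ and in $z$ separately, so as to identify the correct total degree to homogenize to. In $\BP^2\times\BP^1$ with coordinates $(x:y:u)$ and $(z:w)$, the variable $u$ is the homogenizing variable for the $(x,y)$-block and $w$ is the homogenizing variable for the $z$-block. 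The term $xy\cdot S_{m-1}(z)$ has $(x,y)$-degree $2$ and $z$-degree $m-1$; the term $z\cdot S_{m-1}(z)$ has $(x,y)$-degree $0$ and $z$-degree $m$; the cubic-times-$S_mS_{m-1}$ term has $(x,y)$-degree $2$ and $z$-degree $1+m+(m-1)=2m$; and $S_m(z)$ has $(x,y)$-degree $0$ and $z$-degree $m$. So the target $(x,y)$-degree is $2$ and the target $z$-degree is $2m$.

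Next I would homogenize term by term, replacing each monomial $x^a y^b z^c$ by $x^a y^b u^{2-a-b} z^c w^{2m-c}$ (after clearing the Chebyshev denominators into the $T_j$). Using $T_j = w^j S_j(z/w)$, one has $S_j(z) = T_j(z,w)$ along $w=1$, and multiplying $S_j(z)S_{j-1}(z)$ by the appropriate power of $w$ produces $T_m T_{m-1}$ with the leftover power of $w$ accounted for in the homogenization; similarly $S_m(z)$ homogenizes to $w^{m}T_m$ only after matching it to $z$-degree $2m$, i.e.\ it becomes $w^{2m}\cdot(w^{-m}T_m)$—here the cleanest route is to multiply through by $w^{2m}$ at the start and then substitute $z^c S_j(z)\mapsto w^{2m}(z/w)^c \cdot w^{-j}T_j = z^c w^{2m-c-j}T_j$. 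Performing this bookkeeping on each of the four terms: $xy\cdot S_{m-1}$ becomes $xyw\cdot w^{2m-1-(m-1)}T_{m-1}\cdot(\text{wait, re-match }(x,y)\text{-degree})$—concretely $xy\,S_{m-1}(z)\mapsto xyw\cdot w^{2m}\cdot w^{-(m-1)}T_{m-1}$ divided appropriately, which after collecting gives the summand $(xyw)w^{2m}T_{m-1}$ inside the bracket; $-z\,S_{m-1}(z)\mapsto -u^2 z\, w^{2m}w^{-(m-1)}\cdots$, yielding $-u^2 z\, w^{2m}T_{m-1}$-type term but one checks the $w$-powers collapse to $(-u^2 z)w^{2m}T_{m-1}$; the cubic times $S_mS_{m-1}$ homogenizes to $(xyzw+4u^2w^2-x^2w^2-y^2w^2-u^2z^2)T_mT_{m-1}$; and $-S_m(z)$ homogenizes to $-u^2 w^{2m}T_m$ after filling in $u^2$ to reach $(x,y)$-degree $2$ and the remaining $w$-power to reach $z$-degree $2m$. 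Collecting the first three terms (all carrying a common factor $T_{m-1}$ after the dust settles, except that the cubic term carries $T_mT_{m-1}$) gives precisely
\[ F = \big[(xyw-u^2z)w^{2m}+(xyzw+4u^2w^2-x^2w^2-y^2w^2-u^2z^2)T_mT_{m-1}\big]T_{m-1}-u^2w^{2m}T_m. \]

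The only genuine subtlety—the step I expect to be the main obstacle—is the bookkeeping of powers of $w$ when homogenizing products like $z\,S_m(z)S_{m-1}(z)$, because $S_m S_{m-1}$ naturally homogenizes to $w^{-(2m-1)}T_mT_{m-1}$ and one must verify that the leftover powers of $w$ across all terms are consistent so that $F$ is genuinely bihomogeneous of bidegree $(2,2m)$ in $\big((x{:}y{:}u),(z{:}w)\big)$; Lemma~\ref{T} is exactly what is needed to see that no spurious denominators survive and that the bracketed expression is polynomial. I would finish by confirming bihomogeneity of $F$ directly (each monomial has $(x,y,u)$-degree $2$ and $(z,w)$-degree $2m$, using $\deg_{z,w}T_j = j$ and $\deg_{z,w}w^{2m}=2m$), and by checking that setting $w=1$, $u=1$ recovers $tS_{m-1}(z)-S_m(z)$, which is immediate from $T_j(z,1)=S_j(z)$ and the simplified formula for $t$. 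This completes the identification.
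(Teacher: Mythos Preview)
Your approach---homogenize term by term and then verify by specializing $u=w=1$---is exactly what the paper intends (its proof is simply the remark that the computation is elementary). However, your degree bookkeeping is off: the $(z,w)$-degree of $F$ is $3m$, not $2m$. Two slips combine to produce this. First, when you tally the ``cubic-times-$S_mS_{m-1}$'' term you have forgotten the outer factor $S_{m-1}(z)$ coming from $tS_{m-1}(z)$; that summand is really $(xyz+4-x^2-y^2-z^2)S_m(z)S_{m-1}(z)^2$. Second, you take the $z$-degree of $xyz+4-x^2-y^2-z^2$ to be $1$, but the $z^2$ term makes it $2$. Your own final check would have caught this: $u^2w^{2m}T_m$ has $(z,w)$-degree $2m+m=3m$, and the bracketed expression times the outer $T_{m-1}$ has $(z,w)$-degree $(2+(2m-1))+(m-1)=3m$ as well.

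Once you correct the target bidegree to $(2,3m)$, the rest of your argument is fine; the dehomogenization at $u=w=1$ via $T_j(z,1)=S_j(z)$ recovers $tS_{m-1}(z)-S_m(z)$, and together with bihomogeneity this determines $F$. One small simplification: you do not need Lemma~\ref{T} to rule out ``spurious denominators''---$T_j=w^jS_j(z/w)$ is already a polynomial in $z,w$ by construction, so there are none.
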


 We now determine the singular points in the projective closure of our natural model in $\BP^2\times \BP^1$.  To find singular points, we consider solutions $(x:y:u,z:w)$ of $F=F_x=F_y=F_u=F_z=F_w=0$.  

First, we compute these partial derivatives, which is elementary to verify by direct calculations. 
\begin{lemma} With $F$ as in Lemma~\ref{lemma:homoganeouspolynomial} the first order partials of $F$ are given by the following.
\begin{eqnarray*}
F_x &=& \left(yw^{2m}+(yz-2xw)T_mT_{m-1} \right) w \,T_{m-1},\\
F_y &=& \left(xw^{2m}+(xz-2yw)T_mT_{m-1} \right) w \,T_{m-1},\\
F_u &=& -2u \left[   \left( zw^{2m}+(z^2-4w^2)T_mT_{m-1} \right) T_{m-1} + w^{2m}T_m \right],\\
F_z &=& \left[ -u^2w^{2m}+(xyw-2u^2z)T_mT_{m-1} +(xyzw+4u^2w^2-x^2w^2-y^2w^2-u^2z^2)(T_mT_{m-1} )_z \right] T_{m-1}\\
       && + \left[ (xyw-u^2z)w^{2m}+(xyzw+4u^2w^2-x^2w^2-y^2w^2-u^2z^2)T_mT_{m-1} \right] (T_{m-1})_z - u^2w^{2m}(T_m)_z,\\
F_w &=&  \big[ (2m+1)xyw^{2m}-2mu^2zw^{2m-1}+(xyz+8u^2w-2x^2w-2y^2w)T_mT_{m-1} \\
       && + \, (xyzw+4u^2w^2-x^2w^2-y^2w^2-u^2z^2)(T_mT_{m-1} )_w \big] T_{m-1}\\
       && + \left[ (xyw-u^2z)w^{2m}+(xyzw+4u^2w^2-x^2w^2-y^2w^2-u^2z^2)T_mT_{m-1} \right] (T_{m-1})_w\\
       && - u^2(2mw^{2m-1}T_m+w^{2m}(T_m)_w).
\end{eqnarray*}
\end{lemma}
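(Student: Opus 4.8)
The plan is to obtain each of the five partial derivatives by routine application of the product and chain rules to the formula for $F$ in Lemma~\ref{lemma:homoganeouspolynomial}, keeping in mind that $T_m = T_m(z,w)$ and $T_{m-1}=T_{m-1}(z,w)$ are functions of $z$ and $w$ only, and then grouping the resulting terms into the stated shapes. It is convenient to abbreviate $F = A\,T_{m-1}-u^2w^{2m}T_m$, where
\[ A = (xyw-u^2z)\,w^{2m}+(xyzw+4u^2w^2-x^2w^2-y^2w^2-u^2z^2)\,T_mT_{m-1}, \]
so that every partial is assembled from a partial of $A$, a partial of the single monomial $u^2w^{2m}T_m$, and (for the $z$- and $w$-derivatives) the outer product-rule term $A\,(T_{m-1})_z$ or $A\,(T_{m-1})_w$.

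For $F_x$: only $A$ involves $x$, through the terms $xyw\cdot w^{2m}$ and $-x^2w^2\,T_mT_{m-1}$, so $A_x = y\,w^{2m+1}+(yzw-2xw^2)\,T_mT_{m-1}=w\big(yw^{2m}+(yz-2xw)T_mT_{m-1}\big)$ and $F_x=A_x\,T_{m-1}$, which is the displayed formula; $F_y$ then follows by the symmetry of $A$ (hence of $F$) under $x\leftrightarrow y$. For $F_u$: $u$ enters only through $u^2$, so $F_u = A_u\,T_{m-1}-2u\,w^{2m}T_m$ with $A_u = -2uz\,w^{2m}+(8uw^2-2uz^2)T_mT_{m-1}=-2u\big(zw^{2m}+(z^2-4w^2)T_mT_{m-1}\big)$; factoring out $-2u$ gives the claimed expression.

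For $F_z$ and $F_w$ I would first apply the product rule to $F = A\,T_{m-1}-u^2w^{2m}T_m$, obtaining $F_\bullet = A_\bullet\,T_{m-1}+A\,(T_{m-1})_\bullet-u^2(w^{2m})_\bullet T_m-u^2w^{2m}(T_m)_\bullet$, and then expand $A_\bullet$ by the product rule inside $A$, leaving $(T_m)_z$, $(T_{m-1})_z$, $(T_mT_{m-1})_z$ and their $w$-analogues symbolic since no closed forms are needed. For $F_w$ the only remaining inputs are the elementary derivatives $(w^{2m})_w=2m\,w^{2m-1}$, $(xyw)_w=xy$, $(xyzw)_w=xyz$, $(4u^2w^2)_w=8u^2w$, $(x^2w^2)_w=2x^2w$, $(y^2w^2)_w=2y^2w$, $(u^2z^2)_w=0$; substituting and collecting terms reproduces the stated $F_w$, while $F_z$ is the easier case where $w$ is held constant. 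There is no genuine obstacle here --- the content is pure bookkeeping --- so the only real care needed is to carry the nested product rules (inside $T_mT_{m-1}$, inside $A$, and inside $F$) without dropping terms, and in particular to remember that differentiating the explicit $w^{2m}$ factors in $F_w$ is exactly what produces the $(2m+1)xyw^{2m}$, $-2mu^2zw^{2m-1}$, and $-2mu^2w^{2m-1}T_m$ contributions.
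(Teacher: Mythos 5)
Your proposal is correct and is exactly the paper's approach: the paper offers no written proof beyond the remark that the partials are ``elementary to verify by direct calculations,'' and your decomposition $F=A\,T_{m-1}-u^2w^{2m}T_m$ with repeated product rule is precisely that calculation, with each displayed formula checking out. Nothing further is needed.
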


We can now determine the singular points. 
\begin{proposition}\label{proposition:singularpoints} The singular points $(x:y:u,z:w) \in \BP^2 \times \BP^1$ of $F$ are

\begin{itemize}
\item $(1:0:0,1:0),\, (0:1:0,1:0)$,

\item $(1:0:0,z:1),\, (0:1:0,z:1)$ where $z$ is a root of $S_{m-1}(z)$,

\item $(1:1:0,z:1)$ where $z$ is a root of $S_m(z)-S_{m-1}(z)$,

\item $(1:-1:0,z:1)$ where $z$ is a root of $S_m(z)+S_{m-1}(z)$.
\end{itemize}
The number of singularities is $4m$ if $m \ge 1$, and is $-(2+4m)$ if $m \le -2$.
\end{proposition}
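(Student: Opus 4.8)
The plan is to solve the system $F = F_x = F_y = F_u = F_z = F_w = 0$ directly, organizing the analysis by the value of $u$ and by which of the Chebyshev factors $T_{m-1}$, $T_m$ vanish. First I would dispose of the points at infinity $w = 0$: substituting $w=0$ into $F$ and the partials (recalling $T_j(z,0)$ is the leading coefficient of $S_j$, up to sign, hence nonzero), all the $w$-weighted terms drop out and one is left with conditions forcing $u^2 \cdot(\text{nonzero}) = 0$ and $xy\cdot(\text{nonzero})=0$, which pins down $(1:0:0,1:0)$ and $(0:1:0,1:0)$; one then checks these are genuinely singular by evaluating $F_z$ there. For the affine chart $w=1$, I would first treat $u \neq 0$: the equations $F_x = 0$, $F_y = 0$ become (after removing the nonzero factor $w\,T_{m-1}$, once we know $T_{m-1}\neq 0$ there) a linear system in $x,y$ whose determinant is $(T_mT_{m-1})^2 z^2 - \bigl(w^{2m}+z T_mT_{m-1}\bigr)^2$ — I expect this to force $x = \pm y$, and then $F_u = 0$ combined with the Chebyshev identity of Lemma~\ref{T} should collapse to an impossible relation, ruling out $u\neq 0$ entirely.

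That leaves $u = 0$, $w = 1$, which is the bulk of the argument. With $u=0$ the polynomial $F$ factors as $\bigl[xy + (xyz - x^2 - y^2)S_m S_{m-1}\bigr]S_{m-1} - 0$, and more importantly the big bracket in $F_z$ and $F_w$ simplifies dramatically. Here I would split on whether $S_{m-1}(z) = 0$. If $S_{m-1}(z)=0$: then $F$ vanishes automatically, $F_u=0$ reduces (using $S_{m-1}=0$) to $S_{2m}(z)T_{m-1} + T_m = T_m = S_m(z)$, wait — rather $F_u = -2u[\cdots] $ already vanishes since $u=0$, so I instead read off $F_x$, $F_y$: these contain an overall factor $S_{m-1}(z)=0$ and so vanish, while $F_z$, after using $S_{m-1}=0$, should reduce to $-x y S_m'(z) \cdot(\text{stuff})$ forcing $xy = 0$, giving $(1:0:0,z:1)$ and $(0:1:0,z:1)$ with $S_{m-1}(z)=0$ (and one checks $S_m(z)\neq 0$ at such $z$ by separability/coprimality, so $F_w$ is consistent). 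If instead $S_{m-1}(z)\neq 0$ (still $u=0$, $w=1$): divide $F=0$ by $S_{m-1}$ to get $xy - z = -(xy z + 4 - x^2 - y^2 - z^2)S_mS_{m-1}$ (using $S_m^2 + S_{m-1}^2 - zS_mS_{m-1}=1$), and $F_x = y[1 + (yz-2x)S_mS_{m-1}]\cdot(\text{nonzero})$, similarly $F_y$. Ruling out $x=0$ or $y=0$ (those force $z$-contradictions via $F_z$), we get the linear system $1 + (yz - 2x)S_mS_{m-1} = 0 = 1 + (xz-2y)S_mS_{m-1}$, whose difference gives $(x-y)(z+2)S_mS_{m-1} = 0$; analyzing the cases $z = -2$, $x=y$, $x=-y$ and feeding back into $F=0$ and $F_z = 0$ should yield exactly the roots described: $x=y$ with $S_m(z) = S_{m-1}(z)$, and $x=-y$ with $S_m(z) = -S_{m-1}(z)$.

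Finally I would count. The roots of $S_{m-1}$ number $|m-1|$... more carefully, by Lemma~\ref{lemma:chebyshevidentities} the degree of $S_{m-1}$ is $m-1$ if $m\geq 1$ and $-m-1$ if $m\leq -2$, so $S_{m-1}$ contributes that many values of $z$, each giving two singular points $(1:0:0,z:1)$, $(0:1:0,z:1)$; the polynomial $S_m - S_{m-1}$ and the polynomial $S_m + S_{m-1}$ each contribute their degree many; one must be slightly careful that these three polynomials have no common roots (if they did a point would be double-counted) — but a common root of $S_m - S_{m-1}$ and $S_m + S_{m-1}$ would force $S_m = S_{m-1} = 0$, impossible by coprimality, and similarly for overlaps with $S_{m-1}$. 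Summing the degrees plus the two points at infinity and checking the arithmetic gives $4m$ for $m\geq 1$ and $-(2+4m)$ for $m\leq -2$, as claimed.

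\medskip

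The main obstacle I anticipate is the $u=0$, $S_{m-1}(z)\neq 0$ branch: verifying that the remaining equation $F_z = 0$ (after $F=0$, $F_x=0$, $F_y=0$ have been used) is automatically satisfied on the claimed loci and forces nothing extra requires a genuine computation with $(T_mT_{m-1})_z$ and the Chebyshev recurrences — this is where the identities in Lemma~\ref{identities} and Lemma~\ref{H} will have to be deployed with some care, and where an overlooked spurious solution could hide. A secondary delicate point is the coprimality bookkeeping in the final count, ensuring the four listed families are pairwise disjoint.
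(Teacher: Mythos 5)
Your overall strategy coincides with the paper's: work chart by chart ($w=0$ versus $w=1$), split on whether $S_{m-1}(z)$ vanishes, and reduce $F_x=F_y=0$ to a homogeneous linear system in $(x,y)$ whose vanishing determinant forces $S_m^2(z)=S_{m-1}^2(z)$, hence $x=\pm y$; the crux you correctly identify (checking that $F_z=0$, and hence $F_w=0$ by the Euler relation, holds automatically at the candidate points $x=\pm y$, $u=0$) is exactly where the paper deploys the identities of Lemmas \ref{identities} and \ref{T}.

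There is, however, one genuine gap: your claim that $F_u=0$ rules out $u\neq 0$ \emph{entirely} fails for the trivial solution $x=y=0$ of the linear system, i.e.\ for points $(0:0:1,z:1)$. There one computes $F=-u^2\big[\big(z+(z^2-4)S_mS_{m-1}\big)S_{m-1}+S_m\big]=-u^2S_{3m}(z)$ by Lemma \ref{identities}(c), and likewise $F_u=-2uS_{3m}(z)$, so whenever $S_{3m}(z)=0$ the equations $F=F_x=F_y=F_u=0$ are all satisfied with $u\neq 0$ and no ``impossible relation'' appears. (These are precisely the extra degenerate fibers in Proposition \ref{proposition:degeneratefibers}.) To exclude them from the singular locus you must pass to $F_z=-u^2S_{3m}'(z)$ and invoke separability of $S_{3m}$; your sketch, as organized, would either silently assert $u=0$ or erroneously admit these points. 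Your argument for $x=\pm y\neq(0,0)$ is fine, since there $S_{3m}=\mp S_m\neq 0$ forces $u=0$.

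Two smaller computational slips are worth flagging, since they propagate. The determinant of the system $F_x=F_y=0$ at $w=1$ is $4(S_mS_{m-1})^2-(S_m^2+S_{m-1}^2)^2=-(S_m^2-S_{m-1}^2)^2$, not the expression you wrote. And $F_x$ does not factor as $y\big[1+(yz-2x)S_mS_{m-1}\big]$; it is $y(S_m^2+S_{m-1}^2)-2xS_mS_{m-1}$ up to the factor $S_{m-1}$, so the correct combinations are $F_x-F_y\propto(y-x)(S_m+S_{m-1})^2$ and $F_x+F_y\propto(x+y)(S_m-S_{m-1})^2$, not ``$(x-y)(z+2)S_mS_{m-1}=0$.'' These still lead to the cases $x=y$ with $S_m=S_{m-1}$ and $x=-y$ with $S_m=-S_{m-1}$ (plus $x=y=0$, which must be routed back to the $S_{3m}$ analysis above), so the conclusion survives, but the intermediate identities as written are wrong. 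Your final count and disjointness discussion are fine; disjointness is in fact automatic since the four families have distinct $(x:y:u)$ coordinates.
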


\begin{proof}
We break the analysis down into  cases. 

First, we consider the case when $(w:z)=(0:1)$. We have $F_x=F_y=0$, $F=-u^2$ and $F_u=-2u$. Hence $u=0$. Now we have $F_z=0$ and $F_w=xy$. Thus $xy=0$. In this case, there are 2 singular points $(1:0:0,1:0)$ and $(0:1:0,1:0)$.

Next, we consider the case when $w=1$.  First we assume that $S_{m-1}(z)=0$. Then $F_x=F_y=0$, $F=-u^2S_m(z)$, $F_u=-2uS_m(z)$. Since $S_m(z) \not= 0$, we have $u=0$. Then 
 $F_z=xyS'_{m-1}(z)$ and $F_w=xy(T_{m-1})_w$. Since $S'_{m-1}(z) \not= 0$, we must have $xy=0$. In this case, singular points are $(1:0:0,z:1),\, (0:1:0,z:1)$ where $z$ is a root of $S_{m-1}(z)$.

Finally, we assume that $w=1$ and $S_{m-1}(z) \not= 0$. We have 
\begin{eqnarray*}
F_x &=& y+(yz-2x)S_m(z)S_{m-1}(z)=y(S^2_m(z)+S^2_{m-1}(z))-2xS_m(z)S_{m-1}(z),\\
F_y &=& x+(xz-2y)S_m(z)S_{m-1}(z)=x(S^2_m(z)+S^2_{m-1}(z))-2yS_m(z)S_{m-1}(z).
\end{eqnarray*}
Note that if $x$ and $y$ are not simultaneously equal to 0, we must have $S^2_m(z)-S^2_{m-1}(z)=0$. 

We first consider the subcase when $ x=y=0$, so  $(x:y:u)=(0:0:1)$. Then, by Lemma \ref{identities} $$F=S_{m-1}(z) \Big( -z+(4-z^2)S_{m-1}(z)S_{m}(z) \Big)-S_m(z)=-S_{3m}(z).$$ Since $S_{3m}(z)$ is separable in $\BC[z]$, there are no singular points in this case.

Therefore, we may assume that $xy\neq 0$ and $S^2_m(z)-S^2_{m-1}(z)=0$. We consider the cases that $S_m(z)-S_{m-1}(z)=0$ and $S_m(z)+S_{m-1}(z)=0$ separately.

First assume that  $S_m(z)-S_{m-1}(z)=0$. Then $F_x=F_y=0$ is equivalent to $x=y$. Since $S^2_{m}(z)=\frac{1}{2-z}$, we have $F=u^2S_m(z)$  and $F_u=2uS_m(z)$. Hence $u=0$. Now we have 
\begin{eqnarray*}
F_z &=& \left[ S_m(z)S_{m-1}(z) +(z-2)(S_m(z)S_{m-1}(z))' \right] x^2 S_{m-1}(z).
\end{eqnarray*}
From $S^2_m(z)+S^2_{m-1}(z)-zS_m(z)S_{m-1}(z)=1$ and $S_m(z)=S_{m-1}(z)$, we get $(z-2)(S'_m(z)+S'_{m-1}(z))=-S_m(z)$. It follows that $F_z=0$. 
We have
$$F_w = \big[ (2m+1)+(z-4)S_m(z)S_{m-1}(z) +(z-2)(T_mT_{m-1} )_w \big] x^2 S_{m-1}(z).$$
From $T_m^2+w^2T_{m-1}^2-z \, T_mT_{m-1}=w^{2m}$ (by Lemma \ref{T}) and $S_m(z)=S_{m-1}(z)$, we get 
\[(2-z)\Big((T_m)_w+(T_{m-1})_w\Big)S_m(z)+2S^2_m(z)=2m.\]
It follows that $$(2m+1)+(z-4)S_m(z)S_{m-1}(z) +(z-2)(T_mT_{m-1} )_w=1+(z-2)S^2_m(z)=0.$$ Hence $F_w=0$. The corresponding singular points are $(1:1:0,z:1)$ where $z$ is a root of $S_m(z)-S_{m-1}(z)$.

Finally, assume that $xy=0$ and  $S_m(z)+S_{m-1}(z)=0$. Similar to the above, singular points are $(1:1:0,z:1)$ where $z$ is a root of $S_m(z)+S_{m-1}(z)$.
\end{proof}

\begin{definition}
Let $S=\CZ(F) \subset\BP^2 \times \BP^1$ be the vanishing set of $F$ and $\tilde{S}$ be the desingularization of $S$. 
\end{definition}
Now we  determine the degenerate fibers; we determine all $(z:w) \in \BP^1$ such that $F=F_x=F_y=F_u=0$ has at least one solution $(x:y:u) \in \BP^2$.

\begin{proposition}\label{proposition:degeneratefibers} The degenerate fibers of $\phi: S \to \BP^1$, $(x:y:u,z:w) \mapsto (z:w)$, are

\begin{itemize}
\item $\phi^{-1}(1:0) = \{(x:y:u) \in \BP^2 \mid u^2=0\}$,

\item $\phi^{-1}(z:1) = \{(x:y:u) \in \BP^2 \mid u^2=0\}$ where $z$ is a root of $S_{m-1}(z)$,

\item $\phi^{-1}(z:1) = \{(x:y:u) \in \BP^2 \mid \left( xS_m(z)-yS_{m-1}(z) \right) \left( yS_m(z)-xS_{m-1}(z) \right) =0\}$ where $z$ is a root of $S_{3m}(z)$,

\item $\phi^{-1}(z:1) = \{(x:y:u) \in \BP^2 \mid (x-y)^2-(2-z)u^2=0\}$ where $z$ is a root of $S_m(z)-S_{m-1}(z)$,

\item  $\phi^{-1}(z:1) = \{(x:y:u) \in \BP^2 \mid (x+y)^2-(2+z)u^2=0\}$ where $z$ is a root of $S_m(z)+S_{m-1}(z)$.
\end{itemize}
\end{proposition}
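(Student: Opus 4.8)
The plan is to use that, for each fixed $(z:w)\in\BP^1$, the polynomial $F$ of Lemma~\ref{lemma:homoganeouspolynomial} is homogeneous of degree exactly two in the $\BP^2$-coordinates $(x:y:u)$ (one reads this off the formula: every monomial has total degree two in $x,y,u$), so $\phi\colon S\to\BP^1$ is a conic bundle. Consequently the fiber $\phi^{-1}(z:w)$ is degenerate precisely when the conic $\{F(\,\cdot\,;z,w)=0\}\subset\BP^2$ is singular, i.e. when the three partials $F_x,F_y,F_u$ recorded above have a common zero $(x:y:u)$; by Euler's relation $2F=xF_x+yF_y+uF_u$ any such point lies on the conic automatically. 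So the problem reduces to locating all $(z:w)$ admitting such a common zero and then reading off the equation of the singular conic that appears.

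First I would simplify the partials. On the chart $w=1$ (the chart $w=0$ being a quick separate case), Lemma~\ref{identities}(a) gives
\[
F_x=S_{m-1}(z)\bigl(y(S_m^2+S_{m-1}^2)-2xS_mS_{m-1}\bigr),\qquad F_y=S_{m-1}(z)\bigl(x(S_m^2+S_{m-1}^2)-2yS_mS_{m-1}\bigr),
\]
while Lemma~\ref{identities}(c) collapses the bracket in $F_u$ to $F_u=-2u\,S_{3m}(z)$. Then I run the case analysis: (i) $w=0$, where $F_x=F_y\equiv0$ and $F=-u^2z^{3m}$, so the fiber is the double line $u^2=0$; (ii) $w=1$, $S_{m-1}(z)=0$, where again $F_x=F_y\equiv0$ and $F=-u^2S_m(z)$ with $S_m(z)\ne0$, so the fiber is $u^2=0$; (iii) $w=1$, $S_{m-1}(z)\ne0$, where $F_x=F_y=0$ is a $2\times2$ linear system in $(x,y)$ of determinant $-S_{2m}(z)^2=-(S_m(z)-S_{m-1}(z))^2(S_m(z)+S_{m-1}(z))^2$. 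In case (iii), if $S_{2m}(z)\ne0$ the only solution is $x=y=0$, which forces $S_{3m}(z)=0$ in order that $F_u$ vanish; if $S_m(z)-S_{m-1}(z)=0$ the solution locus is $\{x=y,\,u=0\}$; if $S_m(z)+S_{m-1}(z)=0$ it is $\{x=-y,\,u=0\}$.

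It then remains to identify the conic on each branch. For $z$ a root of $S_{3m}$ the coefficient of $u^2$ in $F$ vanishes, and collecting the surviving terms with Lemma~\ref{identities}(a) factors $F$ as $S_{m-1}(z)\bigl(xS_m(z)-yS_{m-1}(z)\bigr)\bigl(yS_m(z)-xS_{m-1}(z)\bigr)$, giving the third family. For $z$ a root of $S_m-S_{m-1}$, Lemma~\ref{identities}(a) yields $S_m(z)^2=1/(2-z)$; substituting this into $F$ and clearing the factor $2-z$ reduces it to a nonzero multiple of $(x-y)^2-(2-z)u^2$, giving the fourth family, and the symmetric computation on $S_m+S_{m-1}=0$, where $S_m(z)^2=1/(2+z)$, gives the fifth. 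Together with (i) and (ii) these are exactly the five families listed.

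The work is mostly bookkeeping, and the main points to watch are the disjointness of the exceptional loci — no root of $S_{m-1}$ is a root of $S_{3m}$, and $S_{2m}$ and $S_{3m}$ share no root, both because consecutive Chebyshev polynomials are coprime, as used repeatedly in Section~\ref{section:smoothnessandirreducibility} and in Proposition~\ref{proposition:singularpoints} — and the near-miss possibility $S_m(z)=0$, $S_{m-1}(z)\ne0$: there $F_x=F_y=0$ forces $x=y=0$ while $F_u=-2u\,z\,S_{m-1}(z)$, so the fiber is degenerate only if $z=0$, and then $z$ is already a root of $S_{3m}$, so nothing new appears. For $m\le-2$ one first replaces $S_j$ by $-S_{-j-2}$ so that $F$ is again an honest polynomial, after which the identical analysis applies.
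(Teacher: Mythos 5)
Your proposal is correct and follows essentially the same case division as the paper's proof: $(z:w)=(1:0)$, then $w=1$ with $S_{m-1}(z)=0$, then $S_{m-1}(z)\neq 0$ split into $x=y=0$ (forcing $S_{3m}(z)=0$) versus $S_{2m}(z)=0$ (giving the $S_m\mp S_{m-1}$ families), with the same factorizations of $F$ identifying each degenerate conic. The only cosmetic difference is your explicit conic-bundle framing with Euler's relation and the $2\times 2$ determinant $-S_{2m}^2$, which packages the paper's hand-checks of $F=0$ and of the forced vanishings a bit more systematically.
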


\begin{proof} We break the analysis down into  cases. 

First, we consider the case when $(w,z)=(0:1)$.
 We have $F_x=F_y=0$, $F=-u^2$ and $F_u=-2u$. Hence $u=0$. Note that $\phi^{-1}(1:0) = \{(x:y:u) \in \BP^2 \mid u^2=0\}$.

Next, we consider the case when $w=1$. First we assume that $S_{m-1}(z)=0$. Then $F_x=F_y=0$, $F=-u^2S_m(z)$, $F_u=-2uS_m(z)$. Hence $u=0$. In this case $\phi^{-1}(z:1) = \{(x:y:u) \in \BP^2 \mid u^2=0\}$.

Finally, we assume that $w=1$ and $S_{m-1}(z) \not= 0$.  Note that if $x$ and $y$ are not simultaneously equal to 0, we must have $S^2_m(z)-S^2_{m-1}(z)=0$.

We first consider the subcase when $ x=y=0$, so  $(x:y:u)=(0:0:1)$.  Then $F_x=F_y=0$, $F=-S_{3m}(z)$, $F_u=-2S_{3m}(z)$. Hence $S_{3m}(z)=0$. In this case $$\phi^{-1}(z:1) = \{(x:y:u) \in \BP^2 \mid \left( xS_m(z)-yS_{m-1}(z) \right) \left( yS_m(z)-xS_{m-1}(z) \right) =0\}.$$
As a result we may assume that $xy\neq 0$.  Therefore $S_m(z)-S_{m-1}(z)=0$ or $S_m(z)+S_{m-1}(z)=0$. If  $S_m(z)-S_{m-1}(z)=0$ then $F=F_x=F_y=F_u=0$ is equivalent to $x=y$ and $u=0$. In this case $\phi^{-1}(z:1) = \{(x:y:u) \in \BP^2 \mid (x-y)^2-(2-z)u^2=0\}$.  If $S_m(z)+S_{m-1}(z)=0$ then $F=F_x=F_y=F_u=0$ is equivalent to $x=-y$ and $u=0$. In this case $\phi^{-1}(z:1) = \{(x:y:u) \in \BP^2 \mid (x+y)^2-(2+z)u^2=0\}$.
\end{proof}

Next, we consider desingularization. Since $S$ is birational to $\BP^1 \times \BP^1$, we can blow down $\tilde{S}$ over $\BP^1$ some number of times so that it becomes a fiber bundle $\BP^1 \times \BP^1$ over $\BP^1$.

\begin{definition}
In the following, let $\chi$ denote the Euler characteristic of a surface. 
Let $S_{\text{sing}}$ be the set of singular points of $S$ and $N_{\text{sing}}=|S_{\text{sing}}|$. 
Furthermore, let $N$ be such that  $\tilde{S}$ is obtained from $\BP^1 \times \BP^1$ by $N$ one-point blow ups.  
\end{definition}

We have $$\chi(\tilde{S})=\chi(S - S_{\text{sing}})+N_{\text{sing}} \, \chi(\BP^1)=\chi(S)+N_{\text{sing}}$$ 
(see \cite{harada} Lemma 2.2).

By definition $\tilde{S}$ is obtained from $\BP^1 \times \BP^1$ by $N$ one-point blow ups. Then  since $\chi(\BP^1 \times \BP^1)=4$, using $ \BP^1 \times \BP^1$ in place of $S$ in the above, we have 
 $$\chi(\tilde{S})=\chi(\BP^1 \times \BP^1)+N=4+N.$$ 
It follows that $N=\chi(S)+N_{\text{sing}}-4$.
We summarize this as a lemma. 
\begin{lemma}\label{lemma:singularitiesandblowups} We have
 $N=\chi(S)+N_{\text{sing}}-4$.
\end{lemma}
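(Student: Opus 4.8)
The plan is to obtain the formula by computing the topological Euler characteristic $\chi(\tilde S)$ in two independent ways and equating the results; both computations have essentially been set up already, so the work is in assembling them cleanly.

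First I would use the desingularization $\pi:\tilde S\to S$. It restricts to an isomorphism over $S\setminus S_{\text{sing}}$, and over each of the $N_{\text{sing}}$ points of $S_{\text{sing}}$ the exceptional fiber is a copy of $\BP^1$; decomposing $\tilde S$ accordingly and invoking additivity of $\chi$ over this stratification (this is \cite{harada}, Lemma 2.2) gives
\[ \chi(\tilde S)=\chi(S\setminus S_{\text{sing}})+N_{\text{sing}}\,\chi(\BP^1)=\big(\chi(S)-N_{\text{sing}}\big)+2N_{\text{sing}}=\chi(S)+N_{\text{sing}}. \]
Second I would use that $\tilde S$ is obtained from $\BP^1\times\BP^1$ by $N$ one-point blow-ups: each one-point blow-up of a smooth surface removes a point and glues in a $\BP^1$, raising $\chi$ by $\chi(\BP^1)-1=1$, so inductively $\chi(\tilde S)=\chi(\BP^1\times\BP^1)+N=4+N$. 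Equating the two expressions yields $\chi(S)+N_{\text{sing}}=4+N$, i.e. $N=\chi(S)+N_{\text{sing}}-4$, which is the claim.

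The actual mathematical content sits in the two structural inputs that justify these formulas, and that is where I would be careful rather than in the arithmetic. One input is that the exceptional fiber of $\pi$ over each of the singular points classified in Proposition~\ref{proposition:singularpoints} is a single $\BP^1$, so that Harada's lemma applies in the clean form above; the other is that $\tilde S$ genuinely blows down all the way to $\BP^1\times\BP^1$. The latter will follow from the conic-bundle structure $\phi:S\to\BP^1$ recorded in Proposition~\ref{proposition:degeneratefibers}: the generic fiber is a smooth conic, hence $\tilde S$ is a smooth rational surface equipped with a conic bundle over $\BP^1$ and is therefore obtained from a Hirzebruch surface by successive blow-ups; identifying the relevant Hirzebruch surface with $\mathbb F_0=\BP^1\times\BP^1$ is the substance of Theorem~\ref{theorem:blowup} and is deferred there. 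For the present lemma, however, none of this refinement is needed beyond $\chi(\BP^1\times\BP^1)=4$ and additivity of $\chi$, so once those are granted there is no real obstacle — the statement is just the bookkeeping identity combining the two Euler-characteristic counts.
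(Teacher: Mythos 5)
Your argument is correct and is essentially identical to the paper's: both compute $\chi(\tilde S)$ once via the stratification $\chi(\tilde S)=\chi(S-S_{\text{sing}})+N_{\text{sing}}\,\chi(\BP^1)=\chi(S)+N_{\text{sing}}$ (citing Harada's Lemma 2.2) and once as $\chi(\BP^1\times\BP^1)+N=4+N$, then equate. Your added remarks on the structural inputs (exceptional fibers being $\BP^1$'s and $\tilde S$ blowing down to $\BP^1\times\BP^1$) are sensible but the paper likewise takes these as given at this point.
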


\begin{proposition}\label{proposition:eulerchar}The Euler characteristic of $S$ is 
$ \chi(S)=\begin{cases} 4+5m &\mbox{if } m \ge 1 \\ 
                          -5m & \mbox{if } m \le -2. \end{cases}$
\end{proposition}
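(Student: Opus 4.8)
The plan is to compute $\chi(S)$ from the conic bundle structure of the map $\phi\colon S\to\BP^1$, $(x:y:u,z:w)\mapsto(z:w)$, using multiplicativity of the Euler characteristic. Since $F$ is homogeneous of degree $2$ in $x,y,u$ (and its $x^2$-coefficient $-w^2T_mT_{m-1}^2$ is not identically zero, so the degree is exactly $2$ for generic $(z:w)$), every fiber $\phi^{-1}(z:w)$ is a plane conic. By Proposition~\ref{proposition:degeneratefibers} the only fibers that are not smooth conics lie over $(1:0)$, the roots of $S_{m-1}(z)$, the roots of $S_{3m}(z)$, the roots of $S_m(z)-S_{m-1}(z)$, and the roots of $S_m(z)+S_{m-1}(z)$; moreover by Proposition~\ref{proposition:singularpoints} all singular points of $S$ lie in these fibers. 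Hence over the complement of this finite set $\Sigma\subset\BP^1$ the surface $S$ is smooth and $\phi$ is a proper submersion with fiber $\BP^1$, so by Ehresmann it is a topological fiber bundle there, and additivity of $\chi$ gives
\[
\chi(S)=\chi(\BP^1)\cdot 2+\sum_{b\in\Sigma}\bigl(\chi(\phi^{-1}(b))-2\bigr)=4+\sum_{b\in\Sigma}\bigl(\chi(\phi^{-1}(b))-2\bigr),
\]
using that a smooth conic has $\chi=2$, a union of two distinct lines has $\chi=3$, and a double line has $\chi=2$.

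Next I would read off $\chi(\phi^{-1}(b))-2$ for each family from Proposition~\ref{proposition:degeneratefibers}. Over $(1:0)$ and over each root of $S_{m-1}(z)$ the fiber is $\{u^2=0\}$, i.e.\ the line $\{u=0\}\cong\BP^1$, so it contributes $0$ and these families are irrelevant to the count. Over each root of $S_{3m}(z)$ the fiber is $\{(xS_m(z)-yS_{m-1}(z))(yS_m(z)-xS_{m-1}(z))=0\}$; over each root of $S_m(z)-S_{m-1}(z)$ it is $\{(x-y)^2-(2-z)u^2=0\}$; over each root of $S_m(z)+S_{m-1}(z)$ it is $\{(x+y)^2-(2+z)u^2=0\}$. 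For each of these three families I would check that the conic really is a pair of \emph{distinct} lines, so that it contributes $+1$: the two linear factors in the first case are proportional exactly when $S_m(z)^2-S_{m-1}(z)^2=0$, and no root of $S_{3m}$ is a root of $S_m\pm S_{m-1}$ (on those loci $S_m^2(z)=\tfrac{1}{2\mp z}$, and then Lemma~\ref{identities}(c) forces $S_{3m}(z)=-S_m(z)\neq0$); in the remaining two cases one only needs $z\neq2$, resp.\ $z\neq-2$, which holds since $S_m(2)-S_{m-1}(2)=1$ and $S_m(-2)+S_{m-1}(-2)=(-1)^m$, both nonzero. This yields $\chi(S)=4+\deg S_{3m}+\deg(S_m-S_{m-1})+\deg(S_m+S_{m-1})$, all degrees taken in $z$.

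Finally I would insert the degree formulas from Lemma~\ref{lemma:chebyshevidentities}. For $m\ge1$ one gets $\deg S_{3m}=3m$ and $\deg(S_m\pm S_{m-1})=m$, so $\chi(S)=4+3m+m+m=4+5m$; for $m\le-2$ one gets $\deg S_{3m}=-3m-2$ and $\deg(S_m\pm S_{m-1})=-m-1$, so $\chi(S)=4+(-3m-2)+(-m-1)+(-m-1)=-5m$, as claimed. The main obstacle here is not conceptual but careful bookkeeping: one must confirm that the five families of special points in $\BP^1$ are pairwise disjoint (so no fiber is double-counted), that the generic conic has degree exactly $2$, that each degenerate conic in the three relevant families is a pair of distinct lines rather than a double line, and — the most delicate point — that the $m\le-2$ case is handled with the correct Chebyshev degrees using $S_{-j}=-S_{j-2}$. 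Once these are in place the displayed count is immediate.
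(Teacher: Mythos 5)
Your proof is correct, but it takes a genuinely different route from the paper's. The paper computes $\chi(S)$ by projecting to $\BP^1\times\BP^1$ via $(x:y:u,z:w)\mapsto(x:y,z:w)$, resolving the indeterminacy at the points $(0:0:1,z:1)$ with $S_{3m}(z)=0$, and running an inclusion--exclusion over the sets $P$, $Q$, $L$ and the components of $\CZ(G)$; Proposition~\ref{proposition:degeneratefibers} is not actually invoked there. You instead use the conic-bundle structure of $\phi\colon S\to\BP^1$ and sum fiber Euler characteristics, which reduces everything to the list of degenerate fibers already established in Proposition~\ref{proposition:degeneratefibers} together with the observation that a double line contributes $0$ and a pair of distinct lines contributes $+1$; this is shorter and reuses work the paper has already done. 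Your checks at the delicate points are the right ones and they hold: distinctness of the two lines over a root of $S_{3m}$ is exactly disjointness of the roots of $S_{3m}$ from those of $S_m\pm S_{m-1}$, which Lemma~\ref{identities}(c) gives since $S_{3m}=-S_m\neq 0$ on those loci; $z=\pm 2$ is excluded since $S_m(2)-S_{m-1}(2)=1$ and $S_m(-2)+S_{m-1}(-2)=(-1)^m$; and the degree bookkeeping for $m\le -2$ is correct. The one item you flag but do not carry out, pairwise disjointness of the five families in $\BP^1$, is easy and should be recorded: with $z=\sigma+\sigma^{-1}$, roots of $S_{m-1}$ and of $S_{3m}$ correspond to $\sigma^{2m}=1$ and $\sigma^{6m+2}=1$ with $\sigma^2\neq 1$, and $\gcd(2m,6m+2)=2$ rules out a common root; a common root of $S_{m-1}$ and $S_m\pm S_{m-1}$, or of $S_m-S_{m-1}$ and $S_m+S_{m-1}$, would force $S_m$ and $S_{m-1}$ to share a root, which is impossible. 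For the Ehresmann step, note that the fiber of $\phi$ is smooth at $p$ precisely when $d_{(x,y,u)}F(p)\neq 0$, i.e.\ precisely when $\phi$ is submersive at $p$, so smoothness of all conics over $\BP^1\setminus\Sigma$ already yields local triviality there (alternatively, additivity of $\chi$ for constructible stratifications of complex varieties avoids Ehresmann entirely). With these points filled in, $\chi(S)=4+\deg S_{3m}+\deg(S_m-S_{m-1})+\deg(S_m+S_{m-1})$ gives $4+5m$ for $m\ge 1$ and $-5m$ for $m\le -2$, in agreement with the paper.
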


\begin{proof}

 Let $\varphi: S \hookrightarrow \BP^2 \times \BP^1 \dashrightarrow \BP^1 \times \BP^1$ be the rational map defined by $(x:y:u, z:w) \mapsto (x:y,z:w)$. Let $P$ be the set of points $(0:0:1,z:1)$ where $z$ is a root of $S_{3m}(z)$. 
The map $\varphi$ is not defined at points in $P$. Let $U:=S - P$. We now determine $\varphi(U)$. 

Write $F=G+u^2H$ where 
\begin{eqnarray*}
G &=& \left( xyw^{2m+1}+(xyzw-x^2w^2-y^2w^2)T_mT_{m-1} \right) T_{m-1},\\
H &=& \left( -zw^{2m}+(4w^2-z^2)T_mT_{m-1} \right) T_{m-1}-w^{2m}T_m.
\end{eqnarray*}
Note that $\varphi(U)$ is the collection of all points $(x:y,z:w) \in \BP^1 \times \BP^1$ except those for which $F(x:y,z:w) \in \BC[u]$ is a nonzero constant. The polynomial $F(x:y,z:w) \in \BC[u]$ is a nonzero constant whenever $H=0$ and $G \not= 0$, which is equivalent to $$w=1,~S_{3m}(z)=0, \text{~and~}(xS_m(z)-yS_{m-1}(z))(yS_m(z)-xS_{m-1}(z)) \not= 0.$$
Hence $\varphi(U)=\BP^1 \times \BP^1 - Q$, where $Q$ is the set of points $(x:y,z:1) \in \BP^1 \times \BP^1$ satisfying $S_{3m}(z)=0$ and $(xS_m(z)-yS_{m-1}(z))(yS_m(z)-xS_{m-1}(z)) \not= 0.$ Note that $\chi(Q)=0$.

Let $L$ be the set of points $(x:y,z:1) \in \BP^1 \times \BP^1$ satisfying $S_{3m}(z)=0$ and $(xS_m(z)-yS_{m-1}(z))(yS_m(z)-xS_{m-1}(z)) = 0.$ Note that $\{G=H=0\} \subset \BP^1 \times \BP^1$ is equal to $L$. Hence $$\chi(L)=\chi(\varphi^{-1}(L))=\begin{cases} 6m &\mbox{if } m \ge 1 \\ 
-(6m+4) & \mbox{if } m \le -2. \end{cases}$$

Recall that $G=\left( xyw^{2m}+(xyz-x^2w-y^2w)T_mT_{m-1} \right) w \, T_{m-1}$. Since $T_m^2+w^2T_{m-1}^2-z \, T_mT_{m-1}=w^{2m}$, we have $G=(x \, T_m-yw \, T_{m-1})(y \, T_m-xw \, T_{m-1}) w \, T_{m-1}$.

Let $B:=\CZ(G)$ be the zero set of $G$ in $\BP^1 \times \BP^1$. Then $B=B_1 \cup B_2 \cup B_3$ where
\begin{eqnarray*}
B_1 &=& \CZ(w) = \BP^1 \times \{(1:0)\},\\
B_2 &=& \CZ(T_{m-1}) = \BP^1 \times \{(z:1) \mid S_{m-1}(z)=0\},\\
B_3 &=& \CZ(x \, T_m-yw \, T_{m-1}) \cup \CZ(y \, T_m-xw \, T_{m-1})
\end{eqnarray*}
are subsets in $\BP^1 \times \BP^1$. 

We have $B_3=B_{31} \cup B_{32}$, where $B_{31}=\CZ(x \, T_m-yw \, T_{m-1})$ and $B_{32}=\CZ(y \, T_m-xw \, T_{m-1})$. Note that $(x:y,z:w) \in B_{31} \cap B_{32}$ if and only if $x=y \text{~and~} T_m=w \, T_{m-1}$, or $x=-y \text{~and~} T_m=-w \, T_{m-1}$. Hence $$B_{31} \cap B_{32}=\{(1:1,z:1) \mid S_m(z)-S_{m-1}(z)=0\} \cup \{(1:-1,z:1) \mid S_m(z)+S_{m-1}(z)=0\}.$$ 
It follows that $\chi(B_{31} \cap B_{32})=\begin{cases} 2m &\mbox{if } m \ge 1 \\ 
-(2m+2) & \mbox{if } m \le -2 \end{cases}.$ Then 
$$\chi(B_3)=\chi(B_{31})+\chi(B_{32})-\chi(B_{31} \cap B_{32})
=\begin{cases} 4-2m &\mbox{if } m \ge 1 \\ 
6+2m & \mbox{if } m \le -2 \end{cases}.$$.

We have $B_1 \cap B_2 =\emptyset$, $B_1 \cap B_3=\{(1:0,1:0), (0:1, 1:0)\}$, and 
$$B_2 \cap B_3=\{(1:0,z:1),(0:1,z:1) \mid S_{m-1}(z)=0\}.$$
Hence
\begin{eqnarray*}
\chi(B) &=& \chi(B_1)+\chi(B_2)+\chi(B_3)-\chi(B_1 \cap B_2)-\chi(B_1 \cap B_3)-\chi(B_2 \cap B_3)+\chi(B_1 \cap B_2 \cap B_3)\\
&=& \begin{cases} 2+(2m-2)+(4-2m)-0-2-(2m-2)+0=4-2m &\mbox{if } m \ge 1 \\ 
2-(2m+2)+(6+2m)-0-2+(2m+2)+0=6+2m & \mbox{if } m \le -2 \end{cases}.
\end{eqnarray*}
It follows that
\begin{eqnarray*}
\chi(U) &=& 2\chi(\BP^1 \times \BP^1 - (B \sqcup Q))+\chi(B-L)+\chi(\varphi^{-1}(L)) \\
        &=& 2\chi(\BP^1 \times \BP^1)-\chi(B)-2\chi(Q)-\chi(L)+\chi(\varphi^{-1}(L)) \\
        &=& \begin{cases} 4+2m &\mbox{if } m \ge 1 \\ 
                          2-2m & \mbox{if } m \le -2. \end{cases}
\end{eqnarray*}
Then \[\chi(S)=\chi(U)+\chi(P)=\begin{cases} (4+2m)+3m=4+5m &\mbox{if } m \ge 1 \\ 
                          (2-2m)-(3m+2)=-5m & \mbox{if } m \le -2. \end{cases}.\]                          
\end{proof}

Proposition~\ref{proposition:eulerchar} and  Proposition~\ref{proposition:singularpoints} along with the fact that 
\[ N=\chi(S)+N_{\text{sing}}-4\]
gives 
 $N=\chi(S)+N_{\text{sing}}-4=\begin{cases} (4+5m)+4m-4=9m &\mbox{if } m \ge 1 \\ 
                          (-5m)+(-(2+4m))-4=-(6+9m) & \mbox{if } m \le -2. \end{cases}$ 

\begin{reptheorem}{theorem:blowup}{\em
The desingularization of the canonical component of the $\SL_2(\BC)$-character variety of the double twist link $J(3, 2m+1)$ is the conic bundle over the projective line $\BP^1$ which is isomorphic to the surface obtained from $\BP^1 \times \BP^1$ by repeating a one-point blow up $9m$ times if $m \ge 1$, and $-(6+9m)$ times if $m \le -2$. Equivalently,  it is isomorphic to the surface obtained from  $\BP^2$ by repeating a one-point blow up $1+9m$ times if $m \ge 1$, and $-(5+9m)$ times if $m \le -2$.
\label{n=1}}
\end{reptheorem}



\bibliographystyle{amsplain}
\bibliography{doubletwistlinks}

\end{document}